\documentclass[12pt, twoside]{article}
\usepackage{amsmath,amsthm,amssymb}
\usepackage{times}
\usepackage{enumerate}
\usepackage{color}
\usepackage{comment}
\usepackage{graphicx}
\usepackage{subcaption}
\pagestyle{myheadings}
\markboth{Kota Kumazaki, Toyohiko Aiki and 
Adrian Muntean}{A free boundary problem describing migration  into rubbers -- quest of the large time behavior}
\theoremstyle{definition}
\newtheorem{thm}{Theorem}[section]

\newtheorem{lem}[thm]{Lemma}
\newtheorem{defin}[thm]{Definition}



\numberwithin{equation}{section}
\frenchspacing
\textwidth=167mm
\textheight=23cm
\parindent=16pt
\oddsidemargin=-0.5cm
\evensidemargin=-0.5cm
\topmargin=-0.5cm
%

\begin{document}


\baselineskip=17pt


\title{A free boundary problem describing migration into rubbers -- quest of the large time behavior}

\author{Kota Kumazaki\\
{\small Faculty of Education, Nagasaki University,}\\
{\small 1-14 Bunkyo-cho, Nagasaki-city, Nagasaki, 851-8521, Japan}\\
{\small k.kumazaki@nagasaki-u.ac.jp}\\
Toyohiko Aiki\\
{\small Department of Mathematics Faculty of Sciences, Japan Women's University,}\\
{\small 2-8-1, Mejirodai, Bunkyo-ku, Tokyo, 112-8681, Japan} \\
{\small aikit@fc.jwc.ac.jp}\\
Adrian Muntean\\
{\small Department of Mathematics and Computer Science, Karlstad University,}\\
{\small Universitetsgatan 2, 651 88 Karlstad, Sweden}\\
{\small adrian.muntean@kau.se}
}

\date{}

\maketitle

\begin{abstract}
In many industrial applications, rubber-based materials are routinely used in conjunction with various penetrants or diluents in gaseous or liquid form. It is of interest to estimate theoretically the penetration depth as well as  the amount of diffusants stored inside the material. In this framework, we prove the global solvability and explore the large time-behavior of
solutions to a one-phase free boundary problem with nonlinear kinetic condition that is able to describe the migration of diffusants into rubber. 
The key idea in the proof of the large time behavior is to benefit of  a contradiction argument, since it is difficult to obtain uniform estimates for the growth rate of the free boundary due to the use of a Robin boundary condition posed at the fixed boundary.   
\end{abstract}

\section{Introduction}

\label{intro}

In many industrial applications, the behavior of rubber-based materials is difficult to predict theoretically. This intriguing fact is  especially due to their internal structure which allows for unexpected local changes (deformations, concentration localization, network entaglement, etc.) typically facilitated by the absorption and migration  of diffusants into the material; this is from where our motivation stems. There is a variety of possible modeling approaches for such scenarios. Motivated by our recent work \cite{NMKAMWG}, where solutions to our free boundary model did recover experimental data, we choose to follow a macroscopic modeling approach with kinetically-driven interfaces capturing the penetration of diffusants into the material. We refer the reader, for instance, to \cite{Fasano1,Fasano2,Friedman,Edwards} for closely related work especially what concerns the mathematics of Case II diffusion as it arises for some classes of polymers, but not directly applicable to the rubber case.
 
In this paper we consider the mathematical analysis of the following free boundary problem which was  discussed in \cite{NMKAMWG} in connection with the absorption, penetration and diffusion-induced swelling in dense and foamed rubbers.  
Let $[0,s(t)]$ be a region occupied by a solvent (e.g. water, tea) occupying the one-dimensional pore $[0,\infty)$, where $t$ is the time variable,   
$s = s(t)$ is the position of the moving interface, while $u = u(t, z)$ is the content of the diffusant situated at the position $z \in [0,s(t)]$.
The function $u(t,z)$ acts in the non-cylindrical region $Q_s(T)$ given by
\begin{align*}
& Q_s(T):=\{(t, z) | 0<t<T, \ 0<z<s(t) \}.
\end{align*}

Our free boundary problem $(\mbox{P})(u_0, s_0, b)$ reads: Find the pair $(u, s)$ satisfying
\begin{align}
& u_t-u_{zz}=0 \mbox{ for }(t, z)\in Q_s(T), \label{1-1}\\
& -u_z(t, 0)=\beta(b(t)-\gamma u(t, 0)) \mbox{ for }t\in(0, T), \label{1-2}\\
& -u_z(t, s(t))=u(t, s(t))s_t(t) \mbox{ for }t\in (0, T), \label{1-3}\\
& s_t(t)=a_0 \sigma(u(t, s(t))) \mbox{ for }t\in (0, T), \label{1-4}\\
& s(0)=s_0, u(0, z)=u_0(z) \mbox{ for }z \in [0, s_0], \label{1-5}
\end{align}
where $\beta$, $\gamma$ and $a_0$ are given positive constants, $b$ is a given threshold function defined on $[0, T]$, while $s_0$ and $u_0$ are the corresponding initial data. In (\ref{1-4}), $\sigma$ is a function on $\mathbb{R}$ given by 
$$
\sigma(r)=
\begin{cases}
r \quad \mbox{ if }r\geq 0,\\
0 \quad \mbox{ if }r<0.
\end{cases}
$$
In \cite{Visintin}, A. Visintin refers to this type of problems as free or moving boundary problems with {\em kinetic} boundary condition. The reason for calling this way is linked to the fact that relation (\ref{1-4}) is an explicit description of the speed of the free boundary. Note also that, in Refs. \cite{KA, KA2},  the authors have considered the mathematical analysis of a similar problem to $(\mbox{P})(u_0, s_0, b)$ related to  water-induced swelling in porous materials, viz.
\begin{align}
& u_t-u_{zz}=0 \mbox{ for }(t, z)\in (0, T)\times (a, s(t)), \label{1-6}\\
& -u_z(t, a)=\hat{\beta}(h(t)-\gamma u(t, a)) \mbox{ for }t\in(0, T), \label{1-7}\\
& -u_z(t, s(t))=u(t, s(t))s_t(t) \mbox{ for }t\in (0, T), \label{1-8}\\
& s_t(t)=a_0 (u(t, s(t))-\varphi(s(t))) \mbox{ for }t\in (0, T), \label{1-9}\\
& s(0)=\hat{s}_0, u(0, z)=\hat{u}_0(z) \mbox{ for }z \in [a, \hat{s}_0].\label{1-10}
\end{align}
In this context, $a$ is a positive constant, $h$ is a given non-negative function on $[0, T]$, while $\hat{s}_0$ and $\hat{u}_0$ are the initial data such that $\hat{s}_0>a$. Also, $\hat{\beta}$ and $\varphi$ are continuous  functions  on  $\mathbb{R}$ such that $\hat{\beta}(r)>0$  and $\varphi(r)>0$ for $r>0$, and $\hat{\beta}(r)=\varphi(r)=0$ for $r\leq 0$. 
 Denote the above problem $\{(\ref{1-6})-(\ref{1-10})\}$ by $(\hat{\mbox{P}})(\hat{u}_0, \hat{s}_0, h)$. In \cite{KA} in was assumed that $\varphi$ is conveniently small and $\hat{u}_0\in H^1(a, \hat{s}_0)$ such that $\varphi(a)\leq \hat{u}_0 \leq h^*/\gamma$ on $[a, \hat{s}_0]$. Such conditions ensure the existence of a locally-in-time solution $(u, s)$ to $(\hat{\mbox{P}})(\hat{u}_0, \hat{s}_0, h)$ on $[0, T_0]$ 
such that $\varphi(a)\leq u \leq h^*/\gamma$ on $Q^a_s(T_0)$ for some $0<T_0\leq T$, where $h^*$ is a upper bound of $h$.
In \cite{KA2}, relying on  the same assumptions as in \cite{KA}, the authors have constructed a globally-in-time solution $(u, s)$ to $(\hat{\mbox{P}})(\hat{u}_0, \hat{s}_0, h)$ on $[0, T]$ such that $\varphi(a)\leq u \leq h^*/\gamma$ on $Q^a_s(T)$. 
What concerns the  large time behavior of solutions to $(\hat{\mbox{P}})(\hat{u}_0, \hat{s}_0, h)$, one reports in \cite{KA2} that the following situation holds: 
\begin{align*}
\mbox{ if } \lim_{t\to \infty} \int_0^t \hat{\beta}(h(\tau)-\gamma u(\tau, a))d\tau =\infty, \mbox{ then} \lim_{t\to \infty} s(t)=\infty.
\end{align*}

One of our concrete aims here is to construct a global-in-time solution of $(\mbox{P})(u_0, s_0, b)$. As anticipated,  the key to the proof is to establish the strictly positivity for the free boundary. To this end, we consider the free boundary condition (\ref{1-4}),  which should be seen  as  $\varphi \equiv 0$ in (\ref{1-9}) in the models proposed in \cite{KA, KA2}. This is a simplification of the modeling setting which is convenient for mathematical analysis purposes. Furthermore, we adopt  the positive part in (\ref{1-4}), and  hence, we can easily show that the free boundary $s(t)$ is  indeed strictly positive and  the expected global  existence is now reachable. We will investigate elsewhere to which extent such structural restrictions can be relaxed.

Moreover, we establish that our free boundary grows up, namely, it is unbounded. In order to obtain a control on the growth of the free boundary, the mass conservation law (respectively, the momentum balance law)  are effective ingredients in case the boundary condition at the fixed boundary of Neumann (respectively, Dirichlet) type, see for instance  \cite{Cannon}. In the present setting, we impose a Robin boundary condition at the fixed boundary and the usual approach does not work well. Hence, the
rationale beyond showing that $s(t) \to \infty$ as $t \to \infty$ is as follows: If the free boundary is bounded, then we can obtain some uniform-in-time estimates for the target solution, and  consequently, this solution $u(t)$ converges towards the stationary solution of our problem. It is worthwhile to note here that the stationary problem still contains a free boundary condition. However, as a consequence of our uniform estimates, the solution never satisfies the stationary free boundary condition. Thus we can prove the large time behavior by detecting a  contradiction. The idea of applying a contradiction argument concerned with a stationary solution was already applied in \cite{AIIY}. The quest for growth (convergence) rates of  this kind of kinetically-driven free boundaries was completed in the series of papers \cite{AM1,AM2,AM3}. For the problem at hand, proving  quantitative estimates on the growth rate of the free boundary is currently an open problem. 

%
%
%
%

\section{Notation, assumptions and results}
\label{na}

In this paper, we use the following notations. We denote by $| \cdot |_X$ the norm for a Banach space $X$. The norm and the inner product of a Hilbert space $H$ are
 denoted by $|\cdot |_H$ and $( \cdot, \cdot )_H$, respectively. Particularly, for $\Omega \subset {\mathbb R}$, we use the notation of the usual Hilbert spaces $L^2(\Omega)$, $H^1(\Omega)$ and $H^2(\Omega)$.
Throughout this paper, we assume the following parameters and functions:

(A1) $a_0$, $\gamma$, $\beta$ and $T$ are positive constants.

(A2) $b\in W^{1,2}(0, T)$ with $b_*\leq b \leq b^*$ on $(0, T)$, where $b_*$ and $b^*$ are positive constants.

(A3) $s_0>0$ and $u_0\in H^1(0, s_0)$ such that $0 \leq u_0 \leq b^*/ \gamma $ on $[0, s_0]$.

Next, we define our concept of solution to (P)$(u_0, s_0, b)$ on $[0,T]$ in the following way:
\begin{defin}
\label{def0}
For $T>0$, let $s$ be a function on $[0, T]$ and $u$ be a function on $Q_s(T)$. We call  
the pair $(s, u)$  a solution to (P)$(u_0, s_0, b)$ on $[0, T]$ if the following conditions (S1)-(S6) hold:

(S1) $s\in W^{1,\infty}(0, T)$, $0<s$ on $[0, T]$, $u\in L^{\infty}(Q_s(T))$, $u_t$, $u_{zz}\in L^2(Q_s(T))$ and $t \in [0, T] \to |u_z(t,  \cdot)|_{L^2(0, s(t))}$ is bounded;

(S2) $u_t-u_{zz}=0$ on $Q_s(T)$;

(S3) $-u_z(t, 0)=\beta(b(t)-\gamma u(t, 0))$ for a.e. $t\in [0, T]$;

(S4) $-u_z(t, s(t))=u(t, s(t))s_t(t)$ for a.e. $t\in [0, T]$;

(S5) $s_t(t)=a_0 \sigma(u(t, s(t))) $ for a.e. $t\in [0, T]$;

(S6) $s(0)=s_0$ and $u(0, z)=u_0(z)$ for $z \in [0, s_0]$.
\end{defin}

The first result of this paper is concerned with the existence and uniqueness of a  locally-in-time solution  in the sense of Definition \ref{def0} to the problem (P)$(u_0, s_0, b)$.
\begin{thm}
\label{t1}
Let $T>0$. If (A1)-(A3) hold, then there exists 
$T^*  \in  (0, T]$ 
such that (P)$(u_0, s_0, b)$ has a unique solution $(s, u)$ on $[0, T^*]$ satisfying $0 \leq u \leq b^* /\gamma$ on $Q_s(T^*)$.
\end{thm}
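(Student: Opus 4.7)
The plan is to prove Theorem~\ref{t1} by a Banach fixed point argument, applied after reducing the problem to a fixed spatial domain. I would proceed in the following steps.

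\textbf{Step 1: Reduction to a cylindrical domain.} To avoid the moving boundary, I would apply the change of variables $y=z/s(t)$ and set $\tilde u(t,y):=u(t,s(t)y)$ on $(0,T^*)\times(0,1)$. The heat equation \eqref{1-1} then becomes
\begin{equation*}
\tilde u_t - \frac{1}{s(t)^2}\tilde u_{yy} - \frac{s_t(t)}{s(t)} y\,\tilde u_y = 0,
\end{equation*}
with the transformed boundary conditions $-\frac{1}{s(t)}\tilde u_y(t,0)=\beta(b(t)-\gamma\tilde u(t,0))$ and $-\frac{1}{s(t)}\tilde u_y(t,1)=\tilde u(t,1)s_t(t)$. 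In this framework $s$ enters only as a coefficient, so the PDE for $\tilde u$ is well-posed on a fixed interval once $s$ is given.

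\textbf{Step 2: Admissible class and auxiliary problem.} For a fixed time $T^*\in(0,T]$ and suitable constants $L,\delta>0$, I would introduce the complete metric space
\begin{equation*}
M(T^*):=\{\, s\in W^{1,\infty}(0,T^*)\,:\, s(0)=s_0,\ \tfrac{s_0}{2}\le s(t)\le s_0+1,\ 0\le s_t\le L\,\},
\end{equation*}
endowed with, say, the $C([0,T^*])$-metric. For each $s\in M(T^*)$, I would solve the linear parabolic problem above for $\tilde u=\tilde u_s$ by standard variational / Galerkin methods, obtaining $\tilde u_s\in L^\infty\cap L^2(0,T^*;H^1(0,1))$ with $\tilde u_t,\tilde u_{yy}\in L^2$.

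\textbf{Step 3: A priori bounds.} Before setting up the fixed point map, I would derive the two-sided bound $0\le \tilde u_s \le b^*/\gamma$ by the weak maximum principle: test the equation against $(\tilde u_s - b^*/\gamma)^+$ and against $\tilde u_s^-$; the Robin condition at $y=0$ (with $b_*\le b\le b^*$) kills the boundary term in the first test, while in the second test the kinetic condition at $y=1$ contributes a nonnegative term since $s_t\ge 0$. This yields the required $L^\infty$ bound, which is inherited by $u$ after unfolding the change of variables.

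\textbf{Step 4: Fixed point mapping.} I would define $\Lambda:M(T^*)\to C([0,T^*])$ by
\begin{equation*}
\Lambda(s)(t):=s_0+a_0\int_0^t \sigma\bigl(\tilde u_s(\tau,1)\bigr)\,d\tau.
\end{equation*}
Using the $L^\infty$ bound from Step~3 together with trace estimates for $\tilde u_s(\cdot,1)$, I would choose $L$ and then shrink $T^*$ so that $\Lambda(M(T^*))\subset M(T^*)$. The self-mapping property uses that $|\sigma(\tilde u_s(t,1))|\le b^*/\gamma$, giving $\Lambda(s)_t\in[0,a_0 b^*/\gamma]$ and $|\Lambda(s)(t)-s_0|\le a_0 b^* T^*/\gamma$.

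\textbf{Step 5: Contraction and conclusion.} For $s_1,s_2\in M(T^*)$ I would show
\begin{equation*}
|\Lambda(s_1)-\Lambda(s_2)|_{C([0,T^*])}\le C(T^*)\,|s_1-s_2|_{C([0,T^*])},
\end{equation*}
with $C(T^*)\to 0$ as $T^*\to 0$. The Lipschitz estimate for $\tilde u_{s_1}-\tilde u_{s_2}$ is obtained by subtracting the two transformed equations, testing with $\tilde u_{s_1}-\tilde u_{s_2}$, absorbing the differences of the coefficients $1/s^2,\ s_t/s$ via the uniform bounds from $M(T^*)$, and applying Gronwall. Banach's fixed point theorem then gives a unique $s\in M(T^*)$ with $\Lambda(s)=s$; the corresponding $u$ reconstructed from $\tilde u_s$ solves (P)$(u_0,s_0,b)$ in the sense of Definition~\ref{def0}. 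Uniqueness of the full pair $(s,u)$ follows by the same Gronwall-type argument applied directly to differences of two putative solutions.

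\textbf{Main obstacle.} The hard part is Step~5: the trace $\tilde u_s(\cdot,1)$ depends on $s$ through both the PDE coefficients $1/s^2$ and $s_t/s$ and through the boundary condition at $y=1$ (which itself contains $s_t$). Closing a contraction in a norm weak enough to be controlled by $|s_1-s_2|_{C([0,T^*])}$, yet strong enough to give trace control at $y=1$, requires a careful interpolation/trace estimate and is where the smallness of $T^*$ is genuinely used.
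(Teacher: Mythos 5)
Your outline follows the same overall strategy as the paper: transform to a cylindrical domain, solve an auxiliary problem for $\tilde u$ with $s$ given, and run a Banach fixed point iteration on $s(t)=s_0+a_0\int_0^t\sigma(\tilde u_s(\tau,1))\,d\tau$. Two smaller deviations are worth noting but are not problematic. First, your auxiliary boundary condition at $y=1$ keeps the term $\tilde u(t,1)s_t(t)$, whereas the paper substitutes the kinetic law into it and uses $-\frac{1}{s(t)}\tilde u_y(t,1)=a_0\,\tilde u(t,1)\sigma(\tilde u(t,1))$, which is monotone in $\tilde u(t,1)$ and, crucially, no longer involves $s_t$. This is what lets the paper cast the auxiliary problem as an evolution equation governed by a time-dependent subdifferential $\partial\psi^t$; your version can instead be handled by Galerkin since it is a linear Robin condition when $s_t\ge 0$. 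Second, you prove $0\le\tilde u_s\le b^*/\gamma$ for the auxiliary solution, while the paper only proves the $L^\infty$ bound for the final solution (Lemma 4.3) and works with $H^1$-type bounds on $\Psi(s)$ in the fixed point; your variant is legitimate and arguably cleaner.

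The genuine gap is in Step 5. You propose to close the contraction in the metric of $C([0,T^*])$, i.e.\ $|\Lambda(s_1)-\Lambda(s_2)|_{C}\le C(T^*)|s_1-s_2|_{C}$. That cannot work. When you subtract the two auxiliary problems and do the energy estimate for $\tilde u_{s_1}-\tilde u_{s_2}$, the convection term produces $\bigl(\frac{y s_{1t}}{s_1}-\frac{y s_{2t}}{s_2}\bigr)\tilde u_{2y}$ and (in your formulation) the boundary condition at $y=1$ produces $\bigl(\frac{s_{1t}}{s_1}-\frac{s_{2t}}{s_2}\bigr)\tilde u_2(t,1)$. These source terms contain $s_{1t}-s_{2t}$, which is simply not controlled by $|s_1-s_2|_{C([0,T^*])}$: two elements of your admissible set $M(T^*)$ can be uniformly close in $C^0$ while their derivatives differ by the full amount $L$ on a set of positive measure. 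The resulting Gronwall estimate would have a right-hand side involving $\int_0^{T^*}|s_{1t}-s_{2t}|^2\,d\tau$, so the natural norm in which to contract is $W^{1,2}(0,T^*)$, not $C([0,T^*])$. This is exactly what the paper does: it sets up the fixed point on the set $M_K(T^*)\subset W^{1,2}(0,T^*)$ and proves, via estimates (4.6)--(4.14), that $|\Gamma_{T_1}(s_1)-\Gamma_{T_1}(s_2)|_{W^{1,2}(0,T_1)}\le T_1 C_6\bigl(T_1^{1/4}+T_1^{1/2}\bigr)|s_1-s_2|_{W^{1,2}(0,T_1)}$. Your "Main obstacle" paragraph correctly identifies the trace at $y=1$ as the delicate point, but the resolution is not a sharper trace/interpolation estimate; it is the change of metric from $C^0$ to $W^{1,2}$. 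Once you replace the $C^0$ metric by the $W^{1,2}$ metric (and exploit that $\Lambda(s)_t=a_0\sigma(\tilde u_s(\cdot,1))$ is bounded, so the iterates stay in $W^{1,\infty}$ and you avoid the paper's extra approximation step, Lemma 3.4), the argument closes along the lines you sketch.
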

To prove Theorem \ref{t1}, we transform (P)$(u_0, s_0, b)$, initially posed in a non-cylindrical domain, to  a cylindrical domain. Let $T>0$. For given $s\in W^{1,2}(0, T)$ with $s(t)>0$ on $[0, T]$, we introduce the following new function obtained by the change of variables and fix the moving domain:
\begin{align}
\label{2-0}
\tilde{u}(t, y)=u(t, ys(t)) \mbox{ for } (t, y)\in Q(T):=(0, T)\times (0, 1)
\end{align}
By using the function $\tilde{u}$,
(P)$(u_0, s_0, b)$ becomes the following problem (PC)$(\tilde{u}_0, s_0, b)$ on the cylindrical domain $Q(T)$:
\begin{align}
& \tilde{u}_t(t, y)-\frac{1}{s^2(t)}\tilde{u}_{yy}(t, y)=\frac{ys_t(t)}{s(t)}\tilde{u}_y(t, y) \mbox{ for }(t, y)\in Q(T), \label{2-1}\\
& -\frac{1}{s(t)}\tilde{u}_y(t, 0)=\beta(b(t)-\gamma \tilde{u}(t, 0)) \mbox{ for }t\in(0, T), \label{2-2}\\
& -\frac{1}{s(t)}\tilde{u}_y(t, 1)=\tilde{u}(t, 1)s_t(t) \mbox{ for }t\in (0, T),  \label{2-3}\\
& s_t(t)=a_0 \sigma(\tilde{u}(t, 1)) \mbox{ for }t\in (0, T), \label{2-4}\\
& s(0)=s_0, \label{2-5}\\
& \tilde{u}(0, y)=u_0(y s(0))(:=\tilde{u}_0(y)) \mbox{ for }y \in [0, 1]. \label{2-6}
\end{align}

\begin{defin}
\label{def1}
For $T>0$, let $s$ be a function on $[0, T]$ and $\tilde{u}$ be a function on $Q(T)$, respectively. We call that a pair $(s, \tilde{u})$ is a solution of $(\mbox{P})(\tilde{u}_0, s_0, b)$ on $[0, T]$ if the conditions (S'1)-(S'2) hold:

(S'1) $s \in W^{1, \infty}(0, T)$, $s>0$ on $[0, T]$, $\tilde{u}\in W^{1,2}(Q(T))\cap L^{\infty}(0, T; H^1(0, 1))\cap L^2(0, T;H^2(0, 1))$.

(S'2) (\ref{2-1})--(\ref{2-6}) hold.
\end{defin}

Here, we introduce the following function space: For $T>0$, we put $V(T)=L^{\infty}(0, T; L^2(0, 1))\cap L^2(0, T; H^1(0, 1))$ and $|z|_{V(T)}=|z|_{L^{\infty}(0, T; L^2(0, 1))} + |z_y|_{L^2(0, T; L^2(0, 1))}$ for $z\in V(T)$. Note that $V(T)$ is a Banach space with the norm $|\cdot|_{V(T)}$. 

Now, we state the existence and uniqueness of a locally-in-time solution of $(\mbox{PC})(\tilde{u}_0, s_0, b)$. 

\begin{thm}
\label{t2}
Let $T>0$. If (A1)-(A3) hold, then there exists 
$T^* \in (0, T]$ 
such that $(\mbox{PC})(\tilde{u}_0, s_0, b)$ has a unique solution $(s, \tilde{u})$ on $[0, T^*]$. 
\end{thm}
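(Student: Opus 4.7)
The plan is to prove Theorem \ref{t2} by a Banach fixed point argument on the free boundary function $s$. For $M > s_0$ and $T^* \in (0,T]$ to be chosen later, I would work in the closed convex set
\begin{equation*}
X(M, T^*) := \Bigl\{ s \in W^{1,\infty}(0, T^*) \;:\; s(0) = s_0,\; s_0 \le s(t) \le M,\; 0 \le s_t(t) \le \tfrac{a_0 b^*}{\gamma} \text{ a.e.}\Bigr\},
\end{equation*}
endowed with the $C([0,T^*])$ topology. For each $s \in X(M,T^*)$, the prescribed-boundary problem consisting of (\ref{2-1})--(\ref{2-3}) and (\ref{2-6}) becomes a linear parabolic problem for $\tilde u$ with bounded coefficients (since $s \ge s_0 > 0$) and a Robin-type boundary condition at $y=1$ with nonnegative coefficient $s(t)s_t(t)$. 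Standard $L^2$-theory for parabolic equations (e.g.\ Galerkin approximation plus energy estimates) delivers a unique $\tilde u \in W^{1,2}(Q(T^*)) \cap L^\infty(0,T^*;H^1(0,1)) \cap L^2(0,T^*; H^2(0,1))$.

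Next I would establish the a priori bound $0 \le \tilde u \le b^*/\gamma$ by testing the equation with the truncations $-\tilde u^-$ and $(\tilde u - b^*/\gamma)^+$. In both cases the boundary contribution at $y=0$ has a favourable sign thanks to the Robin condition (\ref{2-2}) together with (A2)--(A3), and the contribution at $y=1$ is nonnegative because $s_t \ge 0$; hence Gronwall yields the claim. This uniform bound, combined with the Sobolev embedding $W^{1,2}(Q(T^*)) \hookrightarrow C([0,T^*]; L^2(0,1))$ and the trace embedding, ensures that the trace $\tilde u(\cdot, 1)$ is a well-defined bounded function in $[0, b^*/\gamma]$. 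I then define the map
\begin{equation*}
\Lambda : X(M, T^*) \to C([0,T^*]),\qquad \Lambda(s)(t) := s_0 + a_0 \int_0^t \sigma(\tilde u(\tau, 1))\, d\tau.
\end{equation*}
From $0 \le \sigma(\tilde u(\cdot,1)) \le b^*/\gamma$ one immediately checks that $\Lambda(s) \in X(M, T^*)$ provided $s_0 + a_0 (b^*/\gamma) T^* \le M$, which is satisfied by picking $M := s_0 + 1$ and $T^*$ small; invariance is thus routine.

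The main obstacle, and the step I expect to require the most care, is the contraction estimate. Given $s_1, s_2 \in X(M, T^*)$ with corresponding solutions $\tilde u_1, \tilde u_2$, one subtracts the equations for $w := \tilde u_1 - \tilde u_2$ and tests against $w$ in $L^2(0,1)$. The differences of the coefficients $1/s_i^2$ and $ys_{i,t}/s_i$, together with the nonlinear boundary datum $\tilde u_i(t,1) s_{i,t}(t)$ at $y=1$, produce forcing terms proportional to $|s_1 - s_2|_{C([0,T^*])}$ and to $|s_{1,t} - s_{2,t}|_{L^2(0,T^*)}$; the latter is controlled by $C(T^*)^{1/2}|\sigma(\tilde u_1(\cdot,1)) - \sigma(\tilde u_2(\cdot,1))|_{L^2(0,T^*)}$, using the Lipschitz property of $\sigma$. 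An energy inequality of the form $|w|_{V(T^*)} \le C(M)(T^*)^{\alpha} |s_1 - s_2|_{W^{1,\infty}(0,T^*)}$, combined with a continuous trace $|w(\cdot,1)|_{L^2(0,T^*)} \le C|w|_{V(T^*)}$ coming from the $L^2(0,T^*;H^1(0,1))$ regularity, then yields
\begin{equation*}
|\Lambda(s_1) - \Lambda(s_2)|_{W^{1,\infty}(0,T^*)} \le C(M)(T^*)^{\alpha} |s_1 - s_2|_{W^{1,\infty}(0,T^*)},
\end{equation*}
which is a contraction for $T^*$ small. Banach's fixed point theorem then produces the unique solution $(s, \tilde u)$ on $[0, T^*]$; the regularity $s \in W^{1,\infty}(0,T^*)$ follows from $s_t = a_0 \sigma(\tilde u(\cdot,1)) \in L^\infty(0,T^*)$, and the remaining regularity of $\tilde u$ is inherited from the linear step.
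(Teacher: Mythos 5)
Your overall strategy is a genuine and arguably cleaner alternative to the paper's. The paper first substitutes $s_t = a_0\sigma(\tilde u(\cdot,1))$ into the boundary condition \eqref{2-3}, arriving at the \emph{nonlinear} but monotone condition $-\tfrac{1}{s}\tilde u_y(1) = a_0\tilde u(1)\sigma(\tilde u(1))$, and then solves the resulting auxiliary problem $(\mathrm{AP})$ via the theory of evolution equations governed by time-dependent subdifferentials of the convex functionals $\psi^t$ (Lemmas \ref{lem1}--\ref{lem4}), followed by a fixed-point argument in $s$. You instead keep \eqref{2-3} as a \emph{linear} Robin condition with a prescribed nonnegative coefficient $s(t)s_t(t)$, so that for fixed $s$ the problem is linear and needs no subdifferential machinery; this is a legitimate simplification and the $L^\infty$ barrier argument you sketch (with the sign structure at $y=0$ from (A2)--(A3) and at $y=1$ from $s_t\ge0$) does close correctly.

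However, there is a genuine gap in the contraction step, and it is a \emph{norm mismatch}. Your energy estimate on $w=\tilde u_1-\tilde u_2$ controls $|w|_{V(T^*)}$, and the trace inequality gives $|w(\cdot,1)|_{L^2(0,T^*)}\le C|w|_{V(T^*)}$. Since $(\Lambda(s_i))_t=a_0\sigma(\tilde u_i(\cdot,1))$, this only yields
\begin{equation*}
|(\Lambda(s_1))_t-(\Lambda(s_2))_t|_{L^2(0,T^*)}\le a_0\,|w(\cdot,1)|_{L^2(0,T^*)},
\end{equation*}
i.e.\ $W^{1,2}(0,T^*)$ control, \emph{not} the $W^{1,\infty}(0,T^*)$ bound you assert. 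Passing to the sup-in-time of $|w(t,1)|$ would require an $L^\infty(0,T^*;H^1)$ estimate on the \emph{difference} $w$, which your first-order energy estimate does not give; the paper avoids this precisely by stating and proving the contraction of $\Gamma_{T^*}$ in the norm of $W^{1,2}(0,T^*)$ on the set $M_K(T^*)$ (see Lemma \ref{lem6} and estimates \eqref{4-14}--\eqref{4-14-1}). Relatedly, your set $X(M,T^*)$ is described as being "endowed with the $C([0,T^*])$ topology," yet the contraction inequality is in $W^{1,\infty}$: the Banach fixed point theorem needs completeness and contraction in the \emph{same} metric, and a contraction in $C([0,T^*])$ alone is out of reach because the right-hand side of your energy inequality genuinely involves $|s_{1,t}-s_{2,t}|_{L^2(0,T^*)}$. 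The fix is straightforward: work in the complete metric space given by the $W^{1,2}(0,T^*)$ norm (using that $s_1(0)=s_2(0)$ so $|s_1-s_2|_{C}\le (T^*)^{1/2}|s_{1,t}-s_{2,t}|_{L^2}$), exactly as in the paper, and the rest of your argument goes through.
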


By Theorem \ref{t2}, we see that for a solution $(s, \tilde{u})$ of  $(\mbox{PC})(\tilde{u}_0, s_0, b)$ on $[0, T^*]$,  a pair of the function $(s, u)$ with the variable
\begin{align}
u(t, z):=\tilde{u}\left(t, \frac{z}{s(t)}\right) \mbox{ for } z\in [0, s(t)]
\label{2-7}
\end{align}
is a solution of $(\mbox{P})(u_0, s_0, b)$ on $[0, T^*]$. Moreover, by proving that $(s, u)$ satisfies $0\leq u\leq b^*/\gamma$ on $Q_s(T^*)$, the pair $(s, u)$ is a desired solution of $(\mbox{P})(u_0, s_0, b)$ on $[0, T^*]$ which leads to Theorem \ref{t1}. 

The second result of this paper is the existence and uniqueness of a globally-in-time solution in the sense of Definition \ref{def0} to the problem (P)$(u_0, s_0, b)$.

\begin{thm}
\label{t3}
Let $T>0$. If (A1)-(A3) hold, then $(\mbox{P})(u_0, s_0, b)$ has a unique solution $(s, u)$ on $[0, T]$ satisfying $0 \leq u \leq b^*/\gamma$ on $Q_s(T)$.
\end{thm}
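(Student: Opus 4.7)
The plan is to apply a standard continuation argument based on Theorem \ref{t1}. Let $T_{\max} \in (0, T]$ denote the supremum of values $T' \in (0, T]$ for which $(\mbox{P})(u_0, s_0, b)$ admits a solution $(s, u)$ on $[0, T']$ in the sense of Definition \ref{def0} with $0 \le u \le b^*/\gamma$ on $Q_s(T')$. Theorem \ref{t1} ensures $T_{\max} > 0$. I aim to derive a contradiction from assuming $T_{\max} < T$ by extending the solution past $T_{\max}$.

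The first observation is a pair of uniform bounds on the free boundary. Because $s_t(t) = a_0 \sigma(u(t, s(t))) \ge 0$, the function $s$ is non-decreasing, so $s(t) \ge s_0 > 0$ on $[0, T_{\max})$. Using the $L^\infty$ bound $u \le b^*/\gamma$, integration of (\ref{1-4}) yields the upper bound $s(t) \le s_0 + a_0 b^* T/\gamma =: s^*$, uniformly on $[0, T_{\max})$. Together with the bound $0 \le s_t \le a_0 b^*/\gamma$, this gives uniform control on the coefficients $1/s^2(t)$ and $y s_t(t)/s(t)$ appearing in the cylindrical formulation (PC)$(\tilde u_0, s_0, b)$.

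Next, I derive energy bounds for $\tilde u$ on the cylindrical problem. Testing (\ref{2-1}) successively with $\tilde u$ and with $\tilde u_t$ on $(0,1)$, then handling the boundary contributions via (\ref{2-2})--(\ref{2-3}), and exploiting the $L^\infty$ bound on $\tilde u$, the $W^{1,2}$-regularity of $b$, and the bounds on $s$ and $s_t$ established above, one obtains
\[
\tilde u \in L^\infty(0, T_{\max}; H^1(0,1)), \qquad \tilde u_t,\ \tilde u_{yy} \in L^2(Q(T_{\max})),
\]
with norms bounded independently of the length of the existence interval. Passing to the limit $t \to T_{\max}^-$ produces well-defined limit values $s(T_{\max}) \in [s_0, s^*]$ and $u(T_{\max}, \cdot) \in H^1(0, s(T_{\max}))$ satisfying $0 \le u(T_{\max}, \cdot) \le b^*/\gamma$. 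Thus (A3) is satisfied with the new initial datum at time $T_{\max}$, and Theorem \ref{t1} provides a solution on $[T_{\max}, T_{\max} + \delta]$ for some $\delta > 0$. Concatenating with the existing solution yields a solution on $[0, T_{\max} + \delta]$ satisfying the $L^\infty$ bound, contradicting the maximality of $T_{\max}$. Hence $T_{\max} = T$. Uniqueness propagates from the local uniqueness of Theorem \ref{t1} applied iteratively: any two global solutions must coincide on each subinterval on which the local existence theorem is invoked.

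The main obstacle lies in making the energy estimates genuinely uniform in the existence time. The kinetic condition (\ref{2-3}) at $y=1$ contributes boundary terms proportional to $s_t \tilde u(t,1)^2$ when testing with $\tilde u$ or $\tilde u_t$, and the Robin condition (\ref{2-2}) at $y=0$ contributes terms involving $b(t)$ and $\tilde u(t,0)$; both must be absorbed using the a priori $L^\infty$ bound on $\tilde u$ together with the bound $s_t \le a_0 b^*/\gamma$ and the assumption $b \in W^{1,2}(0, T)$. The positivity of $\sigma$ is precisely what makes the monotonicity $s_t \ge 0$, and consequently the strict positivity of $s$, available without any further smallness assumption on the data.
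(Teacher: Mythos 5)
Your proposal is correct and follows essentially the same continuation argument as the paper: the paper's $\tilde T$ plays the role of your $T_{\max}$, it derives the same bounds $s_0\le s\le L(T)$, obtains the a priori estimate $\tilde u\in L^\infty(0,\tilde T;H^1(0,1))$ and $\tilde u_t\in L^2(Q(\tilde T))$ (Lemma \ref{lem8}, proved rigorously via difference quotients $(\tilde u(t)-\tilde u(t-h))/h$ and convexity of the boundary potentials $\tfrac{a_0}{3}\tilde u^3(t,1)$ and $\Phi(b(t),\tilde u(t,0))$), and then extends past $\tilde T$ by the local theory exactly as you propose. The one refinement in the paper's Lemma \ref{lem8} is that the constant is shown to be independent of $T$ (via a cancellation of the $\frac{s_t}{s}|u_z|^2$ cross terms between the two sides of the energy identity), which is not needed for Theorem \ref{t3} itself but is reused in Section \ref{large} for the large-time analysis.
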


Throughout 
Sections \ref{AP} and \ref{LE}, 
we show Theorem \ref{t1} by proving Theorem \ref{t2} and the boundedness of a solution of $(\mbox{P})(u_0, s_0, b)$. In Section \ref{GE}, we give a proof of Theorem \ref{t3}. 
In the last section, we discuss the large time behavior of a solution of $(\mbox{P})(u_0, s_0, b)$ as $t \to \infty$. In fact, we obtain the result that $s\to \infty$ as $t\to \infty$. The precise statement is stated as Theorem \ref{b-2}.

%
%
%
%
%
%
\section{Auxiliary Problem}
\label{AP}

In this section, we prove Theorem \ref{t2} on the existence and uniqueness of a locally-in-time  solution of $(\mbox{PC})(\tilde{u}_0, s, b)$. 
To do so, we introduce the following auxiliary problem $(\mbox{AP})(\tilde{u}_0, s, b)$: For $T>0$, $s_0>0$ and given $s\in W^{1, 2}(0, T)$ with $s(0)=s_0$ and $s \geq s_0 $ on $[0, T]$, 
\begin{align}
& \tilde{u}_t(t, y)-\frac{1}{s^2(t)}\tilde{u}_{yy}(t, y)=\frac{ys_t(t)}{s(t)} \tilde{u}_y(t, y) \mbox{ for }(t, y)\in Q(T), \label{3-1}\\
& -\frac{1}{s(t)}\tilde{u}_y(t, 0)=\beta(b(t)-\gamma \tilde{u}(t, 0)) \mbox{ for }t\in(0, T), \label{3-2}\\
& -\frac{1}{s(t)}\tilde{u}_y(t, 1)=a_0 \tilde{u}(t, 1)\sigma(\tilde{u}(t, 1))\mbox{ for }t\in (0, T),  \label{3-3}\\
& \tilde{u}(0, y)=\tilde{u}_0(y) \mbox{ for }y \in [0, 1],  \label{3-4}
\end{align}
where $\sigma$ is the same function as in (\ref{1-4}).

First of all, to solve $(\mbox{AP})(\tilde{u}_0, s, b)$, for given  $s\in W^{1,\infty}(0, T)$ with $s(0)=s_0$ and $s \geq s_0$ on $[0, T]$ and $f\in L^2(0, T; H^1(0, 1))$, we consider the problem $(\mbox{AP}1)(\tilde{u}_0, s, f, b)$:
\begin{align*}
& \tilde{u}_t(t, y)-\frac{1}{s^2(t)}\tilde{u}_{yy}(t, y)=\frac{ys_t(t)}{s(t)} f_y(t, y) \mbox{ for }(t, y)\in Q(T), \\
& -\frac{1}{s(t)}\tilde{u}_y(t, 0)=\beta(b(t)-\gamma \tilde{u}(t, 0)) \mbox{ for }t\in(0, T), \\
& -\frac{1}{s(t)}\tilde{u}_y(t, 1)=a_0 \tilde{u}(t, 1)\sigma(\tilde{u}(t, 1)) \mbox{ for }t\in (0, T), \\
& \tilde{u}(0, y)=\tilde{u}_0(y) \mbox{ for }y \in [0, 1].
\end{align*}

Now, we define a family $\{ \psi^t \}_{t\in[0, T]}$ of time-dependent functionals $\psi^t: L^2(0, 1) \to \mathbb{R}\cup \{+\infty \}$
for $t\in [0, T]$ as follows: 
$$
\psi^t(u):=\begin{cases}
             \displaystyle{\frac{1}{2s^2(t)} \int_0^1|u_y(y)|^2 dy} + \displaystyle{\frac{1}{s(t)}\int_0^{u(1)} a_0 \xi \sigma(\xi) d\xi}\\
             -\displaystyle{\frac{1}{s(t)}\int_0^{u(0)}\beta(b(t)-\gamma\xi)d\xi} \mbox{ if } u\in D(\psi^t),\\
             +\infty \quad \rm{otherwise},
             \end{cases}
$$
where $D(\psi^t)=\{z\in H^1(0, 1) | z\geq 0 \mbox{ on } [0, 1] \}$ for $t \in [0, T]$.
Here, we show the property of $\psi^t$.

%
%
%
%
\begin{lem}
\label{lem1}
Let $s\in W^{1,2}(0, T)$ with $s(0)=s_0$ and $s \geq s_0$ on $[0, T]$ and assume (A1)-(A3). Then the following statements hold:
\begin{itemize}
\item[(1)] There exists positive constants $C_0$ and $C_1$ such that the following inequalities hold:
\begin{align*}
(i) & \ |u(y)|^2 \leq C_0 \psi^t(u)+C_1 \mbox{ for }u\in D(\psi^t), 
y=0, 1 \mbox{ and } t \in [0,T], 
\\
(ii) & \ \frac{1}{2s^2(t)}|u_y|^2_{L^2(0, 1)} \leq C_0 \psi^t(u)+C_1 \mbox{ for }u\in D(\psi^t)
\mbox{ and } t \in [0,T]. 
\end{align*}
\item[(2)] For $t\in [0, T]$, the functional $\psi^t$ is proper, lower semi-continuous, and  convex  on $L^2(0, 1)$.
\end{itemize}
\end{lem}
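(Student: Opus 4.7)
For part (1), the plan is to extract pointwise lower bounds from each of the three terms constituting $\psi^t$. Since $\beta$ is a positive constant, the third term evaluates explicitly to $-\frac{\beta b(t)}{s(t)}u(0) + \frac{\beta\gamma}{2s(t)}u(0)^2$, and Young's inequality absorbs the linear part into the quadratic, producing a lower bound of the form $-\frac{\beta (b^*)^2}{\gamma s_0} + \frac{\beta\gamma}{4s(t)}u(0)^2$. The condition $u\in D(\psi^t)$ gives $u(1)\geq 0$, hence $\sigma(u(1))=u(1)$, so the second term reduces to $\frac{a_0 u(1)^3}{3 s(t)}\geq 0$. Combining with the manifestly nonnegative first term yields
\[
\psi^t(u)\;\geq\;\frac{1}{2s^2(t)}\int_0^1|u_y|^2\,dy\;+\;\frac{a_0 u(1)^3}{3 s(t)}\;+\;\frac{\beta\gamma u(0)^2}{4 s(t)}\;-\;\frac{\beta (b^*)^2}{\gamma s_0}.
\]
From this master inequality, (ii) is immediate, and (i) for $y=0$ follows after using $s(t)\leq |s|_{L^\infty(0,T)}$ (recall $W^{1,2}(0,T)\hookrightarrow C([0,T])$). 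For $y=1$, I would either apply the elementary estimate $r^2\leq r^3+1$ valid for all $r\geq 0$ to convert the cubic bound into a quadratic one, or invoke the one-dimensional trace inequality $u(1)^2\leq 2u(0)^2 + 2|u_y|_{L^2(0,1)}^2$ and insert the control already obtained.

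For part (2), properness is trivial since $0\in D(\psi^t)$ with $\psi^t(0)=0$. Convexity holds termwise: the first summand is a quadratic form in $u_y$; the second is the composition of the convex scalar function $r\mapsto \int_0^r a_0\xi\sigma(\xi)\,d\xi$ (whose derivative $a_0 r\sigma(r)$ is continuous and nondecreasing on $\mathbb{R}$, vanishing on $(-\infty,0]$) with the linear trace $u\mapsto u(1)$; the third is a convex quadratic in $u(0)$ post-composed with the linear trace $u\mapsto u(0)$. Passing through the $+\infty$ convention off $D(\psi^t)$ preserves convexity on $L^2(0,1)$.

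Lower semi-continuity is the only step that requires genuine care. Given $u_n\to u$ in $L^2(0,1)$ with $\liminf_n\psi^t(u_n)<\infty$, I would pass to a subsequence along which $\psi^t(u_n)$ is bounded; part (1)(ii) then forces a uniform $H^1(0,1)$ bound on $u_n$. Rellich--Kondrachov supplies a further subsequence converging uniformly on $[0,1]$, which transfers nonnegativity to the limit (so $u\in D(\psi^t)$) and gives $u_n(0)\to u(0)$, $u_n(1)\to u(1)$. The gradient contribution is then handled by the weak lower semi-continuity of the $L^2$-norm applied to $u_{n,y}\rightharpoonup u_y$, while the two boundary integrals pass to the limit by continuity of their integrands.

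I do not anticipate serious obstacles; the only technical point to monitor is that the constants $C_0,C_1$ in part (1) can be chosen uniformly in $t\in[0,T]$, which follows directly from the two-sided bound $s_0\leq s(t)\leq |s|_{L^\infty(0,T)}<\infty$ valid throughout $[0,T]$.
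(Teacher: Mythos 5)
Your proof is correct and follows essentially the same strategy as the paper: Young's inequality applied to the boundary term gives the coercivity bound in part (1), convexity is established termwise, and lower semi-continuity is obtained from the uniform $H^1(0,1)$ bound together with the compact embedding $H^1(0,1)\hookrightarrow C([0,1])$. The only minor variations are that you retain the cubic term $\tfrac{a_0 u(1)^3}{3s(t)}$ (offering $r^2\le r^3+1$ as an alternative to the trace inequality the paper uses at $y=1$) and that you spell out the lower semi-continuity argument which the paper leaves implicit.
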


\begin{proof}
First, we note that for $t\in [0, T]$ if $u\in D(\psi^t)$, then $u(0)$ is non-negative.
Let $t\in [0, T]$ and $u\in D(\psi^t)$. Then, it holds 
\begin{align}
 - \frac{1}{s(t)} \int_0^{u(0)} \beta (b(t)-\gamma \xi) d\xi 
& =\frac{\beta}{s(t)} \left [ \frac{\gamma}{2} u^2(0) -b(t) u(0)\right] \nonumber \\
\geq \frac{\beta \gamma}{2l} u^2(0)-\frac{\beta b^*}{s_0} u(0) & \geq \frac{\beta \gamma}{4l} u^2(0)-\frac{\beta l}{\gamma} \left(\frac{b^*}{s_0} \right)^2.
\label{3-5}
\end{align}
Since the second term of the right-hand side of $\psi^t$ is positive, by (\ref{3-5}), we have that 
\begin{align}
\psi^t(u) &\geq \frac{1}{2s^2(t)} \int_0^1|u_y(y)|^2 dy + \frac{\beta \gamma}{4l} u^2(0)-\frac{\beta l}{\gamma} \left(\frac{b^*}{s_0}\right)^2.
\label{3-6}
\end{align}
Also, it holds that
\begin{align}
|u(1)|^2 & =\biggl | \int_0^1 u_y(y) dy + u(0) \biggr|^2 \leq 2\left (\int_0^1 |u_y(y)|^2 dy + |u(0)|^2 \right) \nonumber \\
& \leq 2\left (\frac{2l^2}{2s^2(t) }\int_0^1 |u_y(y)|^2 dy + |u(0)|^2 \right).
\label{3-6-1}
\end{align}
Therefore, by (\ref{3-6}) and (\ref{3-6-1}) we see that the statement (1) of Lemma \ref{lem1} holds.

Next, we prove statement (2).  For $t\in [0, T]$ and $r\in \mathbb{R}$, put
\begin{align*}
& g_1(s(t), r)=\frac{1}{s(t)}\int_0^r a_0\xi \sigma(\xi) d\xi, \\
& g_2(s(t), b(t), r)=-\frac{1}{s(t)}\int_0^r \beta(b(t)-\gamma \xi)d\xi.
\end{align*}
Then, by $0<s(t)$ we see that 
\begin{align*}
& \frac{\partial^2}{\partial r^2} g_1(s(t), r) =\frac{2a_0}{s(t)} r >0 \mbox{ for }r>0, \\
& \frac{\partial^2}{\partial r^2}g_2(s(t), b(t), r)= \frac{\beta \gamma}{s(t)}>0 \mbox{ for } r\in \mathbb{R}.
\end{align*}
This means that $\psi ^t$ is convex on $L^2(0, 1)$.
Also, by using Lemma \ref{lem1} and Sobolev's embedding $H^1(0, 1) \hookrightarrow C([0, 1])$ in one dimensional case, it is easy to prove that the level set of $\psi^t$ is closed in $L^2(0, 1)$ which leads to the lower semi-continuity of $\psi^t$. Thus, we see that statement (2) holds.
\end{proof}

By Lemma \ref{lem1} we obtain the existence of a solution to $(\mbox{AP}1)(\tilde{u}_0, s, f, b)$.
%
%
%
%
\begin{lem}
\label{lem2}
Let $T>0$ and $s_0>0$. If (A1)-(A3) hold, then, for given $s\in W^{1,2}(0, T)$ with $s(0)=s_0$ and $s \geq s_0 $ on $[0, T]$ and  $f\in L^2(0, T; H^1(0, 1))$, 
the problem $(\mbox{AP}1)(\tilde{u}_0, s, f, b)$ admits a unique solution $\tilde{u}$ on $[0, T]$ such that $\tilde{u}\in W^{1,2}(Q(T))\cap L^{\infty}(0, T;H^1(0, 1))$ with $\tilde{u}\geq 0$ on $Q(T)$. Moreover, the function $t\to \psi^t(\tilde{u}(t))$ is absolutely continuous on $[0, T]$.
\end{lem}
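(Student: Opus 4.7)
The plan is to reformulate $(\mbox{AP}1)(\tilde{u}_0, s, f, b)$ as an abstract Cauchy problem in $L^2(0,1)$ governed by the time-dependent subdifferential of the convex functional $\psi^t$ defined just before Lemma \ref{lem1}, namely
\begin{equation*}
\tilde{u}'(t) + \partial \psi^t(\tilde{u}(t)) \ni g(t), \quad \tilde{u}(0) = \tilde{u}_0,
\end{equation*}
with $g(t,y) := \frac{y s_t(t)}{s(t)} f_y(t,y)$. A short integration by parts identifies $\partial \psi^t$: for a regular enough $u \in D(\psi^t)$, an element $\xi \in \partial \psi^t(u)$ takes the form $\xi = -\frac{1}{s^2(t)} u_{yy}$ together with the two Robin-type boundary conditions of (AP1) at $y=0$ and $y=1$ (with the normal cone to the non-negativity constraint absorbed at contact). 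Lemma \ref{lem1}(2) already supplies the required proper, convex, lower-semicontinuous structure.

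The core technical step is the time-regularity condition on $\psi^t$ needed by the abstract existence theory for evolution equations of subdifferential type (Kenmochi/Yamada). Differentiating the explicit expression of $\psi^t$ formally in $t$ produces contributions with factors $s_t/s^3$, $s_t/s^2$, and $b_t/s$ acting respectively on $\int_0^1 |u_y|^2$, on the two nonlinear boundary integrals, and on a linear term in $u(0)$. Using $s \geq s_0$, the integrability $s_t, b_t \in L^2(0,T)$, and the trace estimate in Lemma \ref{lem1}(1)(i) to absorb the $u(0)$-contribution into the coercive part, I aim to establish an inequality of the form
\begin{equation*}
|\psi^t(u) - \psi^\tau(u)| \leq \int_\tau^t \alpha(r)(\psi^r(u) + 1)\, dr, \quad 0 \leq \tau \leq t \leq T,
\end{equation*}
with $\alpha \in L^1(0,T)$ depending only on $s_0$, $|s_t|_{L^2(0,T)}$, $|b_t|_{L^2(0,T)}$, and the constants of (A1)--(A2). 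This is the standard hypothesis ensuring applicability of the abstract existence theorem.

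Granted this verification, the abstract theorem yields a unique strong solution $\tilde{u}$ with $\tilde{u}(t) \in D(\psi^t)$ for every $t$, $\tilde{u}_t \in L^2(0,T; L^2(0,1))$, absolute continuity of $t \mapsto \psi^t(\tilde{u}(t))$ on $[0,T]$, and the classical chain rule. Membership in $D(\psi^t)$ forces both $\tilde{u}(t) \in H^1(0,1)$ and $\tilde{u}(t) \geq 0$ on $[0,1]$; boundedness of $t \mapsto \psi^t(\tilde{u}(t))$ combined with Lemma \ref{lem1}(1)(ii) then yields $\tilde{u} \in L^\infty(0,T; H^1(0,1))$, which together with $\tilde{u}_t \in L^2(Q(T))$ gives the announced $W^{1,2}(Q(T))$-regularity. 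The PDE and boundary conditions of (AP1) are recovered from the identification of $\partial \psi^t$. Uniqueness follows by testing the difference of two solutions against itself, using monotonicity of $\partial \psi^t$ together with the Lipschitz control of the nonlinear boundary terms provided by Lemma \ref{lem1}(1)(i), and then closing the argument by Gronwall's inequality.

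The principal obstacle is the time-regularity estimate for $\psi^t$: the time-dependence enters nonlinearly through $1/s(t)$ and $1/s^2(t)$ multiplying non-quadratic boundary integrals, and the $b_t$-contribution generates a term that is only linear in $u(0)$, which must be dominated by the coercive quadratic trace term via Lemma \ref{lem1}(1)(i). A subsidiary subtlety is that under $s \in W^{1,2}(0,T)$ the driving term $g = y s_t f_y / s$ is only in $L^1(0,T;L^2(0,1))$; to secure $\tilde{u}_t \in L^2(Q(T))$ one either invokes the abstract theorem in a version accepting such right-hand sides, or one uses the slightly stronger hypothesis $s \in W^{1,\infty}(0,T)$ indicated in the preceding paragraph of the paper.
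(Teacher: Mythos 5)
Your proposal follows essentially the same route as the paper: reformulate (AP1) as a Cauchy problem driven by the time-dependent subdifferential $\partial\psi^t$, verify the proper/convex/l.s.c.\ structure via Lemma \ref{lem1}, establish a time-regularity estimate of the standard form, and invoke Kenmochi's abstract existence theory, from which the regularity, non-negativity, and absolute continuity of $t\mapsto\psi^t(\tilde u(t))$ all follow; the paper records the time-regularity condition in the pointwise difference form (\ref{3-8}), which is equivalent to your integral form via the absolute continuity of $s$ and $b$. One remark worth keeping: your observation that for $s\in W^{1,2}(0,T)$ only, the forcing term $y s_t f_y/s$ is a priori merely $L^1(0,T;L^2(0,1))$ is sharper than the paper's bare assertion that it lies in $L^2(0,T;L^2(0,1))$ --- the latter requires $s\in W^{1,\infty}(0,T)$, which is indeed the hypothesis used when (AP1) is introduced and in Lemma \ref{lem3}, while the general $W^{1,2}$ case is handled later by approximation in Lemma \ref{lem4}.
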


\begin{proof}
By Lemma \ref{lem1}, for $t\in [0, T]$ $\psi^t$ is a proper lower semi-continuous convex function on $L^2(0, 1)$.
From the definition of the subdifferential of $\psi^t$, for $t\in [0, T]$, $z^*\in \partial \psi^t(u)$ is characterized by
$u$, $z^*\in L^2(0, 1)$, 
\begin{align*}
z^*=-\frac{1}{s^2(t)} u_{yy} \mbox{ on } (0, 1), \quad -\frac{1}{s(t)}u_y(0)=\beta(b(t)-\gamma u(0)), \quad -\frac{1}{s(t)}u_y(1)=a_0 u(1)\sigma(u(1)).
\end{align*}
Namely, $\partial \psi^t$ is single-valued. Also, we see that there exists a positive constant $C_3$ such that for each $t_1$, $t_2\in [0, T]$ with $t_1\leq t_2$, and for any $u\in D(\psi^{t_1})$, there exists $\bar{u}\in D(\psi^{t_2})$ such that
\begin{align}
&|\psi^{t_2}(\bar{u})-\psi^{t_1}(u)|\leq C_3(|s(t_1)-s(t_2)|+|b(t_1)-b(t_2)|)(1+|\psi^{t_1}(u)|). \label{3-8}
\end{align}
Indeed, by taking $\bar{u}:=u$ and using (i) and (ii) of Lemma \ref{lem1}, we can find $C_3>0$ such that (\ref{3-8}) holds. 
Now, we consider the following Cauchy problem (CP):
$$ 
\begin{cases}
\tilde{u}_t+\partial \psi^t(\tilde{u}(t)) =\frac{y s_t(t)}{s(t)}f_y(t) \mbox{ in }L^2(0, 1), \\
\tilde{u}(0, y)=\tilde{u}_0(y) \mbox{ for }y\in [0, 1].
\end{cases}
$$
Since $\frac{ys_t }{s}f_y \in L^2(0, T; L^2(0, 1))$, the general theory of evolution equations governed by time dependent subdifferentials (cf. \cite{kenmochi}) guarantees that 
(CP) has a non-negative solution $\tilde{u}$ on $[0, T]$ such that $\tilde{u}\in W^{1,2}(Q(T))$, $\psi^t(\tilde{u}(t)) \in L^{\infty}(0, T)$ and $t \to \psi^t(\tilde{u}(t))$ is absolutely continuous on $[0, T]$. This implies that $\tilde{u}$ is a unique solution of $(\mbox{AP}1)(\tilde{u}_0, s, f, b)$ on $[0, T]$.
\end{proof}

%
%
%
%
\begin{lem}
\label{lem3}
Let $T>0$,  $s_0>0$ and $s\in W^{1,\infty}(0, T)$ with $s(0)=s_0$ and $s \geq s_0$ on $[0, T]$. If (A1)-(A3) hold, then, $(\mbox{AP})(\tilde{u}_0, s, b)$ has a unique solution $\tilde{u}$ on $[0, T]$ such that $\tilde{u}\in W^{1,2}(Q(T))\cap L^{\infty}(0, T;H^1(0, 1))$.
\end{lem}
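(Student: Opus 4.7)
The plan is to deduce Lemma \ref{lem3} from Lemma \ref{lem2} by a Banach fixed-point argument on the space $V(T^*)$ for some $T^* \in (0,T]$ chosen small. For each $f$ in a suitable closed ball $B_R \subset V(T^*)$, Lemma \ref{lem2} furnishes a unique non-negative $\tilde{u}_f \in W^{1,2}(Q(T^*)) \cap L^{\infty}(0,T^*;H^1(0,1))$ solving $(\mbox{AP}1)(\tilde{u}_0, s, f, b)$ on $[0, T^*]$; I then set $\Gamma(f) := \tilde{u}_f$ and observe that a fixed point of $\Gamma$ is exactly a solution of $(\mbox{AP})(\tilde{u}_0, s, b)$. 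Since $s \in W^{1,\infty}(0,T)$ with $s \geq s_0 > 0$, the coefficient $\frac{y s_t(t)}{s(t)}$ lies in $L^\infty(Q(T))$ and the datum $\frac{y s_t}{s} f_y$ does sit in $L^2(0,T^*;L^2(0,1))$, so Lemma \ref{lem2} is applicable. Note that $\tilde{u}_f(0,\cdot) = \tilde{u}_0$ holds automatically, so the initial trace need not be enforced at the level of the ball $B_R$.

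To show $\Gamma(B_R) \subseteq B_R$, I would test the equation for $\tilde{u}_f$ against $\tilde{u}_f$. Integration by parts produces a sign-definite Robin contribution at $y=0$ (after the manipulation used in (3.5)), a non-negative contribution $a_0 \tilde{u}_f(t,1)^2 \sigma(\tilde{u}_f(t,1))$ at $y=1$, and a right-hand side $\int_0^1 \frac{ys_t}{s} f_y \tilde{u}_f\,dy$ that is absorbed via Young's inequality at the cost of $C|f_y|_{L^2}^2$. Gronwall then gives $|\tilde{u}_f|_{V(T^*)} \leq K_0 + K_1 (T^*)^{1/2} |f|_{V(T^*)}$, so choosing $R$ large and $T^*$ small leaves the ball invariant. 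For contractivity, take $f_1, f_2 \in B_R$ with images $\tilde{u}_1, \tilde{u}_2$, subtract the two copies of $(\mbox{AP}1)$ and test against $\tilde{u}_1 - \tilde{u}_2$: the $y=0$ Robin trace contributes a non-negative multiple of $(\tilde{u}_1(t,0)-\tilde{u}_2(t,0))^2$, the $y=1$ trace contributes $a_0[\tilde{u}_1(t,1)\sigma(\tilde{u}_1(t,1))-\tilde{u}_2(t,1)\sigma(\tilde{u}_2(t,1))](\tilde{u}_1(t,1)-\tilde{u}_2(t,1)) \geq 0$ by monotonicity of $r\mapsto r\sigma(r)$, and the remaining term $\int_0^1 \frac{ys_t}{s}(f_1-f_2)_y(\tilde{u}_1-\tilde{u}_2)\,dy$ yields, after Young and integration in time, the estimate $|\tilde{u}_1-\tilde{u}_2|_{V(T^*)} \leq C(T^*)^{1/2}|f_1-f_2|_{V(T^*)}$. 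Shrinking $T^*$ further makes $\Gamma$ a strict contraction, producing a unique fixed point on $[0,T^*]$.

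Once existence and uniqueness are established on $[0,T^*]$, the passage to the full interval $[0,T]$ is routine: both the invariance and the contraction constants depend only on $|s|_{W^{1,\infty}(0,T)}$, $s_0$ and $|b|_{L^\infty(0,T)}$, which are uniform in $t\in[0,T]$, so the argument can be restarted on $[T^*,2T^*]$ with $\tilde{u}(T^*,\cdot) \in H^1(0,1)$ as the new initial datum (available through the $L^\infty(H^1)$ regularity supplied by Lemma \ref{lem2}) and iterated finitely many times to cover $[0,T]$, with uniqueness propagating along. The main obstacle I expect is the gradient term $\frac{ys_t}{s}\tilde{u}_y$ in the PDE of $(\mbox{AP})$: it is precisely this term that prevents a direct subdifferential formulation as in Lemma \ref{lem2}, that forces the choice of $V(T^*)$ (controlling $f_y$ in $L^2_tL^2_y$) as the ambient space, and that must be balanced carefully against the parabolic regularization $\frac{1}{s^2}|\tilde{u}_{f,y}|^2_{L^2}$ to secure a strict contraction on a fixed interval. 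Everything else is standard parabolic bookkeeping once these constants are lined up.
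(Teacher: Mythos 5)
Your proposal follows essentially the same route as the paper: you take the solution operator $f \mapsto \Gamma(f)$ for $(\mbox{AP}1)(\tilde{u}_0, s, f, b)$ supplied by Lemma \ref{lem2}, show it is a contraction on $V(T^*)$ for small $T^*$ (sign-definite boundary terms via monotonicity of $r\sigma(r)$ and the Robin structure, and a H\"older-in-time step giving the $(T^*)^{1/2}$ factor), and then extend to $[0,T]$ by iteration since $T^*$ is independent of the initial datum. The only cosmetic difference is your closed ball $B_R$ with a self-mapping step, which the paper dispenses with because the Lipschitz estimate for $\Gamma$ holds on all of $V(T^*)$; this extra step is harmless.
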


\begin{proof}
First, we define a solution operator $\Gamma_T(f)=\tilde{u}$, where
$\tilde{u}$ is the unique solution of $(\mbox{AP}1)(\tilde{u}_0, s, f, b)$ for given $f\in V(T)$.
Now, for $i=1,2$ we put $\Gamma_T(f_i)=\tilde{u}_i$ and $f=f_1-f_2$ and $\tilde{u}=\tilde{u}_1-\tilde{u}_2$.
Then, we have that 
\begin{align}
& \frac{1}{2}\frac{d}{dt}|\tilde{u}(t)|^2_{L^2(0, 1)} - \int_0^1 \frac{1}{s^2(t)}\tilde{u}_{yy}(t) \tilde{u}(t) dy = \int_0^1 \frac{y s_t(t)}{s(t)}f_{y}(t)\tilde{u}(t) dy.\label{3-9}
\end{align}
Using the boundary condition, it holds that 
\begin{align}
& - \int_0^1 \frac{1}{s^2(t)}\tilde{u}_{yy}(t) \tilde{u}(t) dy \nonumber \\
& = \frac{1}{s^2(t)}\left(-\tilde{u}_y(t, 1)\tilde{u}(t, 1)+\tilde{u}_y(t, 0)\tilde{u}(t, 0) + \int_0^1|\tilde{u}_y(t)|^2 dy\right) \nonumber \\
& =\frac{a_0}{s(t)}\biggl(\tilde{u}_1(t, 1)\sigma(\tilde{u}_1(t, 1))-\tilde{u}_2(t, 1)\sigma(\tilde{u}_2(t, 1))\biggr)\tilde{u}(t, 1) \nonumber \\
& -\frac{1}{s(t)}\biggl(\beta(b(t)-\gamma \tilde{u}_1(t, 0))-\beta(b(t)-\gamma \tilde{u}_2(t, 0))\biggr)\tilde{u}(t, 0) + \frac{1}{s^2(t)}\int_0^1|\tilde{u}_y(t)|^2 dy.
\label{3-10}
\end{align}
Since the function $r\sigma(r)$ is monotone for $r\in \mathbb{R}$, the first term of the right-hand side of $(\ref{3-10})$ is non-negative. 
The second term of the right-hand side of (\ref{3-10}) is also non-negative, hence we see that 
\begin{align}
- \int_0^1 \frac{1}{s^2(t)}\tilde{u}_{yy}(t) \tilde{u}(t) dy \geq \frac{1}{s^2(t)}\int_0^1|\tilde{u}_y(t)|^2 dy.
\label{3-11}
\end{align}
Accordingly, by (\ref{3-9})-(\ref{3-11}), we have that 
\begin{align}
\frac{1}{2}\frac{d}{dt}|\tilde{u}(t)|^2_{L^2(0, 1)}  + \frac{1}{s^2(t)}\int_0^1|\tilde{u}_y(t)|^2 dy \leq  \int_0^1 \frac{ys_t(t)}{s(t)}f_{y}(t) \tilde{u}(t)dy.
\label{3-12}
\end{align}
Here, using H{\" o}lder's inequality, it holds that 
\begin{align}
\int_0^1 \frac{y s_t(t)}{s(t)}f_{y}(t) \tilde{u}(t)dy \leq \frac{|s_t|_{L^{\infty}(0, T)}}{s_0}|\tilde{u}(t)|_{L^2(0, 1)} |f_y(t)|_{L^2(0, 1)}. 
\label{3-13}
\end{align}
Let $T_1\in (0, T]$. Then, by putting $l=\mbox{max}_{0\leq t\leq T}|s(t)|$ and integrating (\ref{3-12}) with (\ref{3-13}) over $[0, t]$ for any $t\in [0, T_1]$ we obtain that 
\begin{align}
& \frac{1}{2}|\tilde{u}(t)|^2_{L^2(0, 1)}  + \frac{1}{2l^2} \int_0^{t} \int_0^1|\tilde{u}_y(\tau)|^2 dy d\tau \nonumber \\
\leq & \frac{|s_t|_{L^{\infty}(0, T)}}{s_0} |\tilde{u}|_{L^{\infty}(0, T_1; L^2(0, 1))} T^{1/2}_1 \left( \int_0^{T_1} |f_y(\tau)|^2_{L^2(0, 1)} d\tau \right)^{1/2}\nonumber \\
\leq & \frac{|s_t|_{L^{\infty}(0, T)}}{s_0} |\tilde{u}|_{V(T_1)} T^{1/2}_1 |f|_{V(T_1)} 
\label{3-14}
\end{align}
Therefore, by putting $\delta=\mbox{min}\{1/2, 1/2l^2\}$ in (\ref{3-14}) we have that 
\begin{align*}
& \delta |\tilde{u}|_{V(T_1)} \leq \frac{|s_t|_{L^{\infty}(0, T)}}{s_0} T^{1/2}_1 |f|_{V(T_1)} \mbox{ for }T_1\in (0, T].
\end{align*}
From this result, we infer that for some $T_1\leq T$ such that $\Gamma_{T_1}$ is a contraction mapping in $V(T_1)$. Therefore, by Banach's fixed point theorem there exists $\tilde{u}\in V(T_1)$ such that
$\Gamma_{T_1}(\tilde{u})=\tilde{u}$ which implies $\tilde{u}$ is a solution of $(\mbox{AP})(\tilde{u}_0, s, b)$ on $[0, T_1]$. Since $T_1$ is independent of the choice of initial data of $\tilde{u}$, by repeating the argument of the local existence result, we can extend the solution $\tilde{u}$ beyond $T_1$. Thus, we prove that Lemma \ref{lem3} holds. 
\end{proof}

Next,  for given $s\in W^{1,2}(0, T)$ with $s(0)=s_0$ and $s \geq s_0$ on $[0, T]$, we construct a solution to $(\mbox{AP})(\tilde{u}_0, s, b)$.

%
%
%
%
\begin{lem}
\label{lem4}
Let $T>0$ and  $s_0>0$. If (A1)-(A3) hold, then, for given $s\in W^{1,2}(0, T)$ with $s(0)=s_0$ and $s \geq s_0$ on $[0, T]$, $(\mbox{AP})(\tilde{u}_0, s, b)$ has a unique solution $\tilde{u}$ on $[0, T]$.
\end{lem}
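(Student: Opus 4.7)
The plan is to recycle the contraction-mapping scheme of Lemma~\ref{lem3}, but to upgrade the key estimate by replacing the $L^\infty(0,T)$ bound on $s_t$ with an $L^2(0,T)$ bound (which is all we now have, since $s\in W^{1,2}(0,T)$). Lemma~\ref{lem2} already delivers, for every such $s$ and every $f\in L^2(0,T;H^1(0,1))$, a unique solution of $(\mbox{AP}1)(\tilde{u}_0, s, f, b)$, so the solution operator $\Gamma_T:V(T)\to V(T)$, $\Gamma_T(f)=\tilde{u}$, is still well defined. The task is to produce some $T_1\in(0,T]$ on which $\Gamma_{T_1}$ is a strict contraction in $V(T_1)$.

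For the difference $\tilde{u}=\Gamma_T(f_1)-\Gamma_T(f_2)$ the energy calculation (\ref{3-9})--(\ref{3-12}) carries over verbatim: the monotonicity of $r\mapsto r\sigma(r)$ and of the linear map $r\mapsto\beta(b-\gamma r)$ still render both boundary contributions non-negative. The only line that must be re-done is (\ref{3-13}). Instead of pulling $|s_t|_{L^\infty(0,T)}$ outside the time integral, I keep $|s_\tau(\tau)|$ inside and apply H\"older in time:
\begin{equation*}
\int_0^{T_1}\frac{|s_\tau(\tau)|}{s_0}\,|\tilde{u}(\tau)|_{L^2(0,1)}|f_y(\tau)|_{L^2(0,1)}\,d\tau
\leq \frac{|s_t|_{L^2(0,T_1)}}{s_0}\,|\tilde{u}|_{V(T_1)}|f_1-f_2|_{V(T_1)}.
\end{equation*}
With $\delta=\min\{1/2,1/(2l^2)\}$ as in Lemma~\ref{lem3}, the analogue of (\ref{3-14}) becomes
\begin{equation*}
\delta\,|\tilde{u}|_{V(T_1)}\leq \frac{1}{s_0}\,|s_t|_{L^2(0,T_1)}\,|f_1-f_2|_{V(T_1)}.
\end{equation*}
Since $|s_t|^2\in L^1(0,T)$, absolute continuity of the Lebesgue integral gives $|s_t|_{L^2(0,T_1)}\to 0$ as $T_1\to 0^+$, so a sufficiently small $T_1$ makes the factor on the right strictly less than $1$; Banach's fixed-point theorem then yields a unique local solution on $[0,T_1]$.

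Next I extend this local solution to the whole $[0,T]$. The smallness requirement on the step size depends only on $|s_t|_{L^2(\cdot)}$, not on the state of the solution. By absolute continuity of the integral there is $\eta>0$ such that $|s_t|_{L^2(I)}<\delta s_0/2$ for every interval $I\subset[0,T]$ with $|I|<\eta$. Partitioning $[0,T]$ into finitely many sub-intervals $[T_k,T_{k+1}]$ with $T_{k+1}-T_k<\eta$, I iterate the local existence result on each piece, using the terminal value $\tilde{u}(T_k)$ (which Lemma~\ref{lem2} guarantees lies in $D(\psi^{T_k})\subset H^1(0,1)$) as the initial datum for the next step. Concatenating the pieces produces a solution $\tilde{u}$ on $[0,T]$, and the uniqueness obtained on each sub-interval upgrades inductively to uniqueness on $[0,T]$.

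The main obstacle I anticipate is bookkeeping rather than analysis: I must check that the regularity class $W^{1,2}(Q(T))\cap L^\infty(0,T;H^1(0,1))$ is preserved across the gluing points $T_k$. This reduces to verifying that $\tilde{u}(T_k)\in D(\psi^{T_k})$ at each interface (so that Lemma~\ref{lem2} can be re-applied) and that the piecewise definition of $\tilde{u}_t$ still belongs to $L^2(Q(T))$; the first follows from the time-continuity of $\tilde{u}\in W^{1,2}(Q(T_1))\hookrightarrow C([0,T_1];L^2(0,1))$ together with the $L^\infty(0,T_1;H^1(0,1))$ bound, and the second from the fact that a finite sum of $L^2$ pieces is $L^2$.
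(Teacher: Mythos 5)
Your proof is correct but takes a genuinely different route from the paper's. The paper proves this lemma by approximation: it chooses a sequence $s_n\in W^{1,\infty}(0,T)$ with $s_n\to s$ in $W^{1,2}(0,T)$, invokes Lemma~\ref{lem3} to obtain solutions $\tilde u_n$ for each $s_n$, derives uniform a priori bounds on $\{\tilde u_{nt}\}$ and $\{\psi^{(\cdot)}(\tilde u_n(\cdot))\}$ by a lengthy difference-quotient computation, extracts a weakly convergent subsequence, and passes to the limit; uniqueness is handled separately by a single Gronwall estimate (which, notably, already needs only $|s_t|^2\in L^1$). You instead rerun the contraction directly for $s\in W^{1,2}$ by the simple observation that the only place Lemma~\ref{lem3} used $|s_t|_{L^\infty}$ is in the factorisation of (\ref{3-13}), and that a Cauchy--Schwarz in time replaces $T_1^{1/2}|s_t|_{L^\infty}$ by $|s_t|_{L^2(0,T_1)}$, which tends to zero by absolute continuity of $\int|s_t|^2$; you then extend by concatenation, using the fact that the admissible step length depends only on the local $L^2$-mass of $s_t$. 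This bypasses Lemma~\ref{lem3} and the entire weak-compactness step. What the paper's approximation buys is the Gronwall estimate for $\psi^t(\tilde u_n(t))$ as an explicit byproduct (and, perhaps, a cleaner verification that the limit $\tilde u$ lies in $L^\infty(0,T;H^1)$), whereas you obtain that regularity indirectly from Lemma~\ref{lem2} applied on each sub-interval. The one spot where your write-up is informal but repairable is the gluing: on $[T_k,T_{k+1}]$ the $L^\infty(T_k,T_{k+1};H^1)$ bound from Lemma~\ref{lem2} depends on $\psi^{T_k}(\tilde u(T_k))$, so one should record that $\tilde u(T_k)\in H^1(0,1)$ with $\psi^{T_k}(\tilde u(T_k))<\infty$ — this follows because $t\mapsto\psi^t(\tilde u(t))$ is absolutely continuous (hence bounded) on the previous piece — and then finitely many pieces yield the global bound.
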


\begin{proof}
For given $s\in W^{1, 2}(0, T)$ with $s(0)=s_0$ and $s\geq s_0$ on $[0, T]$, we choose a sequence $\{s_n\} \subset W^{1, \infty}(0, T)$ and $l>0$ satisfying $s_0\leq s_n\leq l$ on $[0, T]$ for each $n\in \mathbb{N}$,
$s_n \to s$ in $W^{1,2}(0, T)$ as $n \to \infty$. By Lemma \ref{lem3} we can take a sequence $\{\tilde{u}_n\}$ of solutions to $(\mbox{AP})(\tilde{u}_0, s_n, b)$ on $[0, T]$. Then, we see that $t \to \psi^t(\tilde{u}_n(t))$ is absolutely continuous on $[0, T]$ so that $t \to \frac{1}{s^2_n(t)}|\tilde{u}_{ny}(t)|^2_{L^2(0, 1)}$ is continuous on $[0, T]$.
First, it holds that 
\begin{align*}
& \frac{1}{2}\frac{d}{dt}|\tilde{u}_n(t)|^2_{L^2(0, 1)} -\int_0^1 \frac{1}{s^2_n(t)}\tilde{u}_{nyy}(t) \tilde{u}_n(t) dy = \int_0^1 \frac{y s_{nt}(t)}{s_n(t)} \tilde{u}_{ny}(t) \tilde{u}_n(t) dy.
\end{align*}
For the second term in the left-hand side, we have that
\begin{align*}
& -\int_0^1 \frac{1}{s^2_n(t)}\tilde{u}_{nyy}(t) \tilde{u}_n(t) dy \\
=& \frac{a_0}{s_n(t)}\tilde{u}_n(t, 1)\sigma(\tilde{u}_n(t, 1))\tilde{u}_n(t, 1)-\frac{1}{s_n(t)}\beta(b(t)-\gamma \tilde{u}_n(t, 0))\tilde{u}_n(t, 0) + \frac{1}{s^2_n(t)}\int_0^1|\tilde{u}_{ny}(t)|^2 dy \\
\geq & -\frac{1}{s_n(t)}\beta(b(t)-\gamma \tilde{u}_n(t, 0))\tilde{u}_n(t, 0) + \frac{1}{s^2_n(t)}\int_0^1|\tilde{u}_{ny}(t)|^2 dy.
\end{align*}
Hence, we obtain that
\begin{align}
& \frac{1}{2}\frac{d}{dt}|\tilde{u}_n(t)|^2_{L^2(0, 1)} + \frac{1}{s^2_n(t)}\int_0^1|\tilde{u}_{ny}(t)|^2 dy \nonumber \\
\leq &  \int_0^1 \frac{y s_{nt}(t)}{s_n(t)} \tilde{u}_{ny}(t) \tilde{u}_n(t) dy + \frac{1}{s_n(t)}\beta(b(t)-\gamma \tilde{u}_n(t, 0))\tilde{u}_n(t, 0) \mbox{ for } t\in [0, T].
\label{3-15}
\end{align}
Using Young's inequality, we have that
\begin{align}
\label{3-15-1}
\int_0^1 \frac{y s_{nt}(t)}{s_n(t)} \tilde{u}_{ny}(t) \tilde{u}_n(t) dy \leq \frac{1}{2s^2_n(t)}\int_0^1|\tilde{u}_{ny}(t)|^2dy + \frac{|s_{nt}(t)|^2}{{2}}\int_0^1|\tilde{u}_n(t)|^2dy.
\end{align}
Here, by Sobolev's embedding theorem in one dimension, we note that it holds that 
\begin{align}
\label{1d}
|v(t, y)|^2 \leq C_e |v(t)|_{H^1(0, 1)} |v(t)|_{L^2(0, 1)} \mbox{ for } v\in H^1(0, 1) \mbox{ and } y\in [0, 1], 
\end{align}
where $C_e$ is a positive constant defined from Sobolev's embedding theorem. 
By (\ref{1d}) and $s_0\leq s_n$ on $[0, T]$ we get  
\begin{align}
\label{3-15-2}
& \frac{1}{s_n(t) }\beta(b(t)-\gamma \tilde{u}_n(t, 0))\tilde{u}_n(t, 0) \leq \frac{\beta b^*}{s_n(t)}|\tilde{u}_n(t, 0)| \nonumber \\
\leq & \frac{\beta b^*C_e}{2s_n(t)}\biggl(|\tilde{u}_{ny}(t)|_{L^2(0, 1)}|\tilde{u}_n(t)|_{L^2(0, 1)} + |\tilde{u}_n(t)|^2_{L^2(0, 1)}\biggr) +
\frac{\beta b^*}{2 s_n(t)} \nonumber \\
\leq & \frac{1}{4s^2_n(t)}|\tilde{u}_{ny}(t)|^2_{L^2(0, 1)} + \left( (\beta b^* C_e)^2+ \frac{\beta b^* C_e}{2s_0}\right)|\tilde{u}_n(t)|^2_{L^2(0, 1)} + \frac{\beta b^*}{2s_0}.
\end{align}
As a result, we see from (\ref{3-15})-(\ref{3-15-2}) that
\begin{align*}
& \frac{1}{2}\frac{d}{dt}|\tilde{u}_n(t)|^2_{L^2(0, 1)} + \frac{1}{4s^2_n(t)}\int_0^1|\tilde{u}_{ny}(\tau)|^2 dy \\
\leq & \left( \frac{|s_{nt}(t)|^2}{{2}} + (\beta b^* C_e)^2+ \frac{\beta b^* C_e}{2s_0} \right)|\tilde{u}_n(t)|^2_{L^2(0, 1)} + \frac{\beta b^*}{2s_0} \mbox{ for }t\in[0, T].
\end{align*}
Now, we denote $F_n(t)$ the coefficient of $|\tilde{u}_n|^2_{L^2(0, 1)}$ in the right-hand side. Then, by $s_n \leq l$ on $[0, T]$, the boundedness of $\{F_n\}$ in $L^1(0, T)$ and Gronwall's inequality
we obtain that 
\begin{align}
& \frac{1}{2}|\tilde{u}_n(t)|^2_{L^2(0, 1)} + \frac{1}{4l^2}\int_0^t |\tilde{u}_{ny}(t)|^2_{L^2(0, 1)} d\tau \nonumber \\
& \leq \left(\frac{1}{2}|\tilde{u}_0|^2_{L^2(0, 1)} + \left( \frac{\beta b^*}{2s_0}\right)T \right)e^{C} \mbox{ for }t\in [0, T].
\label{3-16}
\end{align}

Next, we put $\tilde{u}_n(t)=u_0$ for $t<0$. For each $n\in \mathbb{N}$ and $h>0$, it holds 
\begin{align}
& \int_0^1 \tilde{u}_{nt}(t) \frac{\tilde{u}_n(t)-\tilde{u}_n(t-h)}{h}dy - \int_0^1 \frac{1}{s^2_n(t)} \tilde{u}_{nyy}(t)\frac{\tilde{u}_n(t)-\tilde{u}_n(t-h)}{h}dy \nonumber \\
&=\int_0^1 \frac{y s_{nt}(t)}{s_n(t)}\tilde{u}_{ny}(t) \frac{\tilde{u}_n(t)-\tilde{u}_n(t-h)}{h}dy.
\label{3-17}
\end{align}
The second term of (\ref{3-17}) can be dealt as follows:
\begin{align*}
& - \int_0^1 \frac{1}{s^2(t)} \tilde{u}_{nyy}(t)\frac{\tilde{u}_n(t)-\tilde{u}_n(t-h)}{h}dy\\
=& -\frac{\tilde{u}_{ny}(t, 1)}{s^2_n(t)}\frac{\tilde{u}_n(t, 1)-\tilde{u}_n(t-h, 1)}{h} + \frac{\tilde{u}_{ny}(t, 0)}{s^2_n(t)}\frac{\tilde{u}_n(t, 0)-\tilde{u}_n(t-h, 0)}{h}\\
& +\int_0^1 \frac{\tilde{u}_{ny}(t)}{s^2_n(t)}\frac{\tilde{u}_{ny}(t)-\tilde{u}_{ny}(t-h)}{h}dy.
\end{align*}
We denote $I_1$, $I_2$ and $I_3$ the three terms in the last identity and estimate three terms separately. For $I_1$, using the same notation $g_1$ and $g_2$ in the proof of  Lemma \ref{lem1}, it follows that
\begin{align*}
& I_1 \geq \frac{1}{h} \frac{1}{s_n(t)} \left(\int_0^{\tilde{u}_n(t, 1)}a_0\xi \sigma(\xi) d\xi - \int_0^{\tilde{u}_n(t-h, 1)}a_0\xi \sigma(\xi) d\xi \right)\\
=& \frac{g_1(s_n(t), \tilde{u}_n(t, 1))-g_1(s_n(t-h), \tilde{u}_n(t-h, 1))}{h} + \frac{1}{h}\left(\frac{1}{s_n(t-h)}-\frac{1}{s_n(t)}\right) \int_0^{\tilde{u}_n(t-h, 1)}a_0\xi\sigma(\xi)d\xi. \\
\end{align*}
Next, for $I_2$ and $I_3$ we have that 
\begin{align*}
I_2 & \geq \frac{1}{h}\frac{1}{s_n(t)}\left(-\int_0^{\tilde{u}_n(t, 0)}\beta(b(t)-\gamma \xi)d\xi + \int_0^{\tilde{u}_n(t-h, 0)}\beta(b(t)-\gamma \xi)d\xi \right)\\
=& \frac{g_2(s_n(t), b(t), \tilde{u}_n(t, 0))-g_2(s_n(t-h), b(t-h), \tilde{u}_n(t-h, 0))}{h}\\
& + \frac{1}{h}\left(-\frac{1}{s_n(t-h)}+\frac{1}{s_n(t)}\right)\int_0^{\tilde{u}_n(t-h, 0)}\beta(b(t-h)-\gamma \xi)d\xi\\
& - \frac{1}{h}\frac{1}{s_n(t)}\int_0^{\tilde{u}_n(t-h, 0)}\biggl(\beta(b(t-h)-\gamma \xi)-\beta(b(t)-\gamma \xi)\biggr)d\xi,
\end{align*}
and 
\begin{align*}
I_3 & \geq \frac{1}{h}\frac{1}{2 s^2_n(t)}\left(\int_0^1|\tilde{u}_{ny}(t)|^2dy-\int_0^1|\tilde{u}_{ny}(t-h)|^2dy\right)\\
& =\frac{1}{h}\left(\frac{1}{2s^2_n(t)}\int_0^1|\tilde{u}_{ny}(t)|^2dy-\frac{1}{2 s^2_n(t-h)}\int_0^1|\tilde{u}_{ny}(t-h)|^2dy\right)\\
& +\frac{1}{h}\left(\frac{1}{2 s^2_n(t-h)}-\frac{1}{2 s^2_n(t)}\right) \int_0^1|\tilde{u}_{ny}(t-h)|^2dy.
\end{align*}
Combining the above three estimates and using the fact that $t \to 1/s^2_n(t)|\tilde{u}_{ny}(t)|^2$ is continuous on $[0, T]$, we obtain
\begin{align*}
& \liminf_{h\to 0}(I_1+I_2+I_3)\\
\geq & \frac{d}{dt}\psi^t(\tilde{u}_n(t)) + \frac{s_{nt}(t)}{s^2_n(t)}\int_0^{\tilde{u}_n(t, 1)}a_0\xi \sigma(\xi) d\xi + \frac{s_{nt}(t)}{s^2_n(t) } \int_0^{\tilde{u}_n(t, 0)} \beta(b(t)-\gamma \xi)d\xi \\
& + \frac{1}{s_n(t)}\int_0^{\tilde{u}_n(t, 0)}\beta b_t(t) d\xi+ \frac{s_{nt}(t)}{s^3_n(t)}\int_0^1|\tilde{u}_{ny}(t)|^2dy.
\end{align*}
Applying this result to (\ref{3-17}) and letting $h\to 0$, we observe that 
\begin{align}
& |\tilde{u}_{nt}(t)|^2_{L^2(0, 1)} + \frac{d}{dt}\psi^t(\tilde{u}_n(t)) \nonumber \\
\leq & \int_0^1\frac{y s_{nt}(t)}{s_n(t)} \tilde{u}_{ny}(t)\tilde{u}_{nt}(t)dy + \frac{|s_{nt}(t)|}{s^2_n(t)} \int_0^{\tilde{u}_n(t, 1)}a_0 \xi \sigma(\xi) d\xi + \frac{|s_{nt}(t)|}{s^2_n(t) }\biggl| \int_0^{\tilde{u}_n(t, 0)}\beta(b(t)-\gamma \xi)d\xi \biggr|\nonumber \\
& + \frac{1}{s_n(t)}\left| \int_0^{\tilde{u}_n(t, 0)}\beta b_t(t) d\xi\right| + \frac{|s_{nt}(t)|}{s^3_n(t)}\int_0^1|\tilde{u}_{ny}(t)|^2dy.
\label{3-17-1}
\end{align}
Denote $J_i (1\leq i \leq 5)$ each terms in the right-hand side of (\ref{3-17-1}).  
Using Lemma \ref{lem1} and $s_n\geq s_0$ on $[0, T]$, we  estimate each terms $J_i$ except for $i=2$ as follows:
\begin{align*}
J_1 & \leq \frac{1}{2}|\tilde{u}_{nt}(t)|^2_{L^2(0, 1)} + \frac{1}{2}\frac{|s_{nt}(t)|^2}{s^2_n(t)}|\tilde{u}_{ny}(t)|^2_{L^2(0, 1)}\\
& \leq \frac{1}{2}|\tilde{u}_{nt}(t)|^2_{L^2(0, 1)} + |s_{nt}(t)|^2 \left(C_0\psi^t(\tilde{u}_n(t))+C_1\right), \\
J_3 & \leq \frac{|s_{nt}(t)| \beta }{s^2_0}\left (b^*|\tilde{u}_n(t, 0)| + \gamma \frac{|\tilde{u}_n(t, 0)|^2}{2}\right)\\
& \leq \frac{\beta b^*}{s^2_0} \left(\frac{|s_{nt}(t)|^2}{2}+\frac{|\tilde{u}_n(t, 0)|^2}{2}\right) + \frac{\beta \gamma |s_{nt}(t)|}{2s^2_0}|\tilde{u}_n(t, 0)|^2, \\
J_4 & \leq \frac{\beta}{s_0}|b_t(t)|\tilde{u}_n(t, 0)\leq \frac{\beta}{s_0}\left(\frac{|b_{t}(t)|^2}{2}+\frac{|\tilde{u}_n(t, 0)|^2}{2}\right), \\
J_5 & \leq \frac{|s_{nt}(t)|}{s^3_n(t)}\int_0^1|\tilde{u}_{ny}(t)|^2dy \leq \frac{2|s_{nt}(t)|}{s_0}\left(C_0\psi^t(\tilde{u}_n(t))+C_1\right).
\end{align*}
For $J_2$, by the definition of $\psi^t$, we have that 
\begin{align*}
& J_2 \\
= & \frac{|s_{nt}(t)|}{s_n(t)} \left (\psi^t (\tilde{u}_n(t)) -\frac{1}{2s^2_n(t)} \int_0^1 |\tilde{u}_{ny}(t, y)|^2 dy + \frac{1}{s_n(t)} \int_0^{\tilde{u}_n(t, 0)} \beta(b(t)-\gamma \xi) d\xi \right ) \\ 
\leq & \frac{|s_{nt}(t)|}{s_n(t)} \left (\psi^t (\tilde{u}_n(t)) + \frac{1}{s_n(t)} \beta b^* \tilde{u}_n(t, 0)\right ) \\
\leq & \frac{|s_{nt}(t)|}{s_0} \left (\psi^t (\tilde{u}_n(t)) + \frac{\beta b^*}{2s_n(t)}(1+\tilde{u}^2_n(t, 0))\right)
\end{align*}
Hence, by the estimates for each $J_i$ and (\ref{3-17-1}), we obtain that
\begin{align}
\label{3-17-2}
& \frac{1}{2}|\tilde{u}_{nt}(t)|^2_{L^2(0, 1)} + \frac{d}{dt}\psi^t(\tilde{u}_n(t)) \nonumber \\
\leq & \frac{|s_{nt}(t)|}{s_0} \psi^t(\tilde{u}_n(t)) + \left (|s_{nt}(t)|^2 + \frac{2|s_{nt}(t)|}{s_0} \right) (C_0\psi^t(\tilde{u}_n(t)+C_1) \nonumber \\ 
& + \frac{\beta b^*}{s^2_0} \frac{|s_{nt}(t)|^2}{2} + \left( \frac{\beta b^*}{2s^2_0} + \frac{\beta \gamma |s_{nt}(t)|}{2s^2_0} +\frac{\beta}{2s_0} + \frac{|s_{nt}(t)|}{s_0} \frac{\beta b^*}{2s_0} \right) \tilde{u}^2_n(t, 0) \nonumber \\
& + \frac{\beta}{2s_0} |b_t(t)|^2 + \frac{\beta b^*}{2s_0} \frac{|s_{nt}(t)|}{s_0} 
\mbox{ for a.e. }t\in [0, T].
\end{align}
Here, by using (i) of Lemma \ref{lem1} we put the coefficients of $\psi^t(\tilde{u}_n(t))$ by $l_1(t)$ and otherwise by $l_2(t)$. 
Then, by the fact that $\{s_n\}$ is bounded in $W^{1,2}(0, T)$ and (A2), $l_1$, $l_2\in L^1(0, T)$. 
Now, we see from (\ref{3-17-2}) that 
\begin{align*}
\frac{1}{2}|\tilde{u}_{nt}(t)|^2_{L^2(0, 1)} + \frac{d}{dt}\psi^t(\tilde{u}_n(t)) \leq l_1(t) \psi^t(\tilde{u}_n(t)) + l_2(t) 
\mbox{ for a.e. }t\in [0, T]. 
\end{align*}
Therefore, by using Gronwall's lemma, we have that
\begin{align*}
\frac{1}{2}\int_0^t|\tilde{u}_{nt}(\tau)|^2_{L^2(0, 1)}d\tau + \psi^t(\tilde{u}_n(t)) \leq \biggl[\psi^0({\tilde{u}_0}) + \int_0^t l_2(\tau)d\tau \biggr] e^{\int_0^t l_1(\tau)d\tau}
\mbox{ for }t\in [0, T].
\end{align*}
From this result, 
we infer that the sequence $\{ \tilde{u}_{n} \}$ is bounded in $W^{1,2}(0, T;L^2(0, 1))$ and the sequence $\{ \psi^{(\cdot)}(\tilde{u}_n(\cdot))\}$ is bounded in $L^{\infty}(0, T)$. By these boundedness results and Lemma \ref{lem1}, 
we can take a sequence $\{ n_k \}\subset \{ n\}$ such that for some $\tilde{u}\in W^{1,2}(0, T; L^2(0, 1))\cap L^{\infty}(0, T; H^1(0, 1))$, $\tilde{u}_{n_k} \to \tilde{u}$ weakly in $W^{1,2}(0, T; L^2(0, 1))$, weakly -* in $L^{\infty}(0, T; H^1(0, 1))$ and in $C(\overline{Q(T)})$ as $k \to \infty$. Finally, by letting $k\to \infty$, we see that
$\tilde{u}$ is a solution of $(\mbox{AP})(\tilde{u}_0, s, b)$ on $[0, T]$.

To complete the proof, we show the uniqueness of a solution $(\mbox{AP})(\tilde{u}_0, s, b)$. 
Let $s\in W^{1, 2}(0, T)$ with $s(0)=s_0$ and $s \geq s_0$ on $[0, T]$ and $\tilde{u}_1$ and $\tilde{u}_2$ be solutions of $(\mbox{AP})(\tilde{u}_0, s, b)$ on $[0, T]$. Put $\tilde{u}=\tilde{u}_1-\tilde{u}_2$. 
Then, by (\ref{3-1}) and the same argument of the derivation of (\ref{3-12}), we have that 
\begin{align}
\label{3-24}
\frac{1}{2}\frac{d}{dt}|\tilde{u}(t)|^2_{L^2(0, 1)}  + \frac{1}{s^2(t)} |\tilde{u}_y(t)|^2_{L^2(0, 1)} 
\leq \int_0^1 \frac{y s_{t}(t)}{s(t)} \tilde{u}_{y}(t) \tilde{u}(t) dy. 
\end{align}
For the right-hand side of (\ref{3-24}), we deal as follows:
\begin{align*}
\int_0^1 \frac{y s_{t}(t)}{s(t)} \tilde{u}_{y}(t) \tilde{u}(t) dy 
\leq \frac{1}{2s^2(t)} |\tilde{u}_y(t)|^2_{L^2(0, 1)}   + \frac{|s_t(t)|^2}{2}|\tilde{u}(t)|^2_{L^2(0, 1)}. 
\end{align*}
From the above result and (\ref{3-24}) we obtain that 
\begin{align*}
\frac{1}{2}\frac{d}{dt}|\tilde{u}(t)|^2_{L^2(0, 1)}  + \frac{1}{2l^2} |\tilde{u}_y(t)|^2_{L^2(0, 1)} 
\leq  \frac{|s_t(t)|^2}{2}|\tilde{u}(t)|^2_{L^2(0, 1)}, 
\end{align*}
where $\displaystyle{l=\mbox{max}_{0\leq t\leq T}|s(t)|}$. Therefore, by 
Gronwall's lemma we have that $|\tilde{u}(t)|_{L^2(0, 1)}=0$ for $t\in [0, T]$. 
This implies that $\tilde{u}_1=\tilde{u}_2$ on $[0, T]$. Thus, Lemma \ref{lem4} is proved. 
\end{proof}

%
%
%
\section{Proof of Theorem \ref{t1}}
\label{LE}
In this section, using the results obtained in Section \ref{AP}, we establish the existence of a locally-in-time solution $(\mbox{PC})(\tilde{u}_0, s_0, b)$. In the rest of this section, we assume (A1)-(A3). For $T>0$ and $l>0$ such that $s_0<l$ we set
\begin{align*}
& M(T, s_0, l):=\{ s\in W^{1,2}(0, T) | s_0 \leq s \leq l \mbox{ on } [0, T], s(0)=s_0\}.
\end{align*}
Also, for given $s\in M(T, s_0, l)$, we define two solution mappings as follows: $\Psi : M(T, s_0, l) \to  W^{1,2}(0, T; L^2(0, 1))\cap L^{\infty}(0, T; H^1(0, 1))$ by
$\Psi (s)=\tilde{u}$, where $\tilde{u}$ is a unique solution of $(\mbox{AP})(\tilde{u}_0, s, b)$, and $\Gamma_{T}: M(T, s_0, l) \to W^{1,2}(0, T)$ by
$\Gamma_{T}(s)=s_0 + \int_0^t a_0 \sigma(\Psi(s)(\tau, 1)) d\tau$ for $t\in [0, T]$. Moreover, for any $K>0$ we put
\begin{align*}
& M_K(T):=\{s\in M(T, s_0, l) | \ |s|_{W^{1,2}(0, T)}\leq K \}.
\end{align*}

%
%
%
%

Now, we show that for some $T>0$, $\Gamma_{T}$ is a contraction mapping on the closed set of $M_K(T)$ for any $K>0$.
\begin{lem}
\label{lem6}
Let $K>0$. Then, there exists a positive constant $T^*\leq T$ such that 
the mapping $\Gamma_{T^*}$ is a contraction on the closed set $M_K(T^*)$ in $W^{1,2}(0, T^*)$.
\end{lem}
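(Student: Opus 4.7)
The plan is to apply Banach's fixed-point theorem to $\Gamma_{T^*}$ restricted to $M_K(T^*)$, which is a closed (and convex) subset of $W^{1,2}(0,T^*)$. I would verify (a) that $\Gamma_{T^*}$ maps $M_K(T^*)$ into itself and (b) a strict contraction estimate in the $W^{1,2}(0,T^*)$-norm, both forced by taking $T^*$ small enough in terms of $K$ and the data.

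For (a), I would observe that $\Gamma_{T^*}(s)(0)=s_0$ by construction and $\Gamma_{T^*}(s)(t)\ge s_0$ because $\sigma\ge 0$. The upper bound $\Gamma_{T^*}(s)(t)\le l$ and the norm constraint $|\Gamma_{T^*}(s)|_{W^{1,2}(0,T^*)}\le K$ would both follow from a uniform bound on the trace $\Psi(s)(\cdot,1)$: by Lemma~\ref{lem1}(1)(i) combined with the energy estimate produced in the proof of Lemma~\ref{lem4}, this trace is controlled in $L^{\infty}(0,T^*)$ by a constant depending only on $K$ and the data. Then both $|\Gamma_{T^*}(s)-s_0|_{L^{\infty}(0,T^*)}$ and $|(\Gamma_{T^*}(s))_t|_{L^2(0,T^*)}=a_0|\sigma(\Psi(s)(\cdot,1))|_{L^2(0,T^*)}$ vanish as $T^*\downarrow 0$, giving the two required estimates.

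For (b), I would set $\tilde{u}_i=\Psi(s_i)$ and $\tilde{u}=\tilde{u}_1-\tilde{u}_2$. Subtracting (\ref{3-1}) for $i=1,2$ produces
\begin{equation*}
\tilde{u}_t-\frac{1}{s_1^2}\tilde{u}_{yy}=\Bigl(\frac{1}{s_1^2}-\frac{1}{s_2^2}\Bigr)\tilde{u}_{2yy}+\frac{y s_{1t}}{s_1}\tilde{u}_y+y\Bigl(\frac{s_{1t}}{s_1}-\frac{s_{2t}}{s_2}\Bigr)\tilde{u}_{2y},
\end{equation*}
together with the corresponding subtractions of (\ref{3-2})--(\ref{3-3}) on the boundary. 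Testing with $\tilde{u}$ and integrating by parts in $y$, the $y=1$ boundary term splits as
\begin{equation*}
\frac{a_0}{s_1}\bigl[\tilde{u}_1\sigma(\tilde{u}_1)-\tilde{u}_2\sigma(\tilde{u}_2)\bigr]\tilde{u}(t,1)+\frac{a_0(s_1-s_2)}{s_1^2}\tilde{u}_2\sigma(\tilde{u}_2)\tilde{u}(t,1),
\end{equation*}
whose first summand is nonnegative since $r\mapsto r\sigma(r)$ is nondecreasing; the $y=0$ boundary term produces $\frac{\beta\gamma}{s_1}|\tilde{u}(t,0)|^2\ge 0$ plus a remainder of order $|s_1-s_2|$. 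I would control the interior sources and the two boundary remainders by H\"older's inequality, using the a priori $L^2(Q(T^*))$ bounds on $\tilde{u}_{2y}$ and $\tilde{u}_{2yy}$ delivered by Lemma~\ref{lem4}, together with $|s_1(t)-s_2(t)|\le T^{*1/2}|s_1-s_2|_{W^{1,2}(0,T^*)}$, valid because $s_1(0)=s_2(0)=s_0$. A Gronwall step then yields
\begin{equation*}
|\tilde{u}|_{V(T^*)}\le \omega(T^*)\,|s_1-s_2|_{W^{1,2}(0,T^*)},\qquad \omega(T^*)\to 0 \text{ as } T^*\to 0.
\end{equation*}
The trace inequality (\ref{1d}) transfers this bound to $|\tilde{u}(\cdot,1)|_{L^2(0,T^*)}$, and since $\sigma$ is $1$-Lipschitz one concludes $|\Gamma_{T^*}(s_1)-\Gamma_{T^*}(s_2)|_{W^{1,2}(0,T^*)}\le C\omega(T^*)|s_1-s_2|_{W^{1,2}(0,T^*)}$, which is a contraction after one further shrinking of $T^*$.

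The hardest step will be the energy estimate for $\tilde{u}$: because the diffusion and drift coefficients depend on the unknowns themselves, the difference equation carries source terms involving $\tilde{u}_{2yy}$ that cannot be absorbed directly by the left-hand side. The vanishing factor $\omega(T^*)$ depends crucially on the common initial value $s_1(0)=s_2(0)=s_0$, which converts a $W^{1,2}$-smallness of $s_1-s_2$ into a uniform-in-time smallness of the coefficient mismatch, and on the uniform-in-$s$ higher regularity of $\tilde{u}_2$ provided by Lemma~\ref{lem4}; without either of these ingredients the contraction step would not close.
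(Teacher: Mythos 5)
Your overall strategy is the paper's: show $\Gamma_{T}$ maps $M_K$ into itself for small $T$, then prove a contraction estimate by an $L^2$-energy argument for $\tilde u=\Psi(s_1)-\Psi(s_2)$, exploiting the monotonicity of $r\mapsto r\sigma(r)$ and of the Robin term, Gronwall, and the trace inequality (\ref{1d}) to return to $\Gamma$. Two points in your contraction step, however, do not hold as written. First, you claim that Gronwall yields $|\tilde u|_{V(T^*)}\le\omega(T^*)|s_1-s_2|_{W^{1,2}(0,T^*)}$ with $\omega(T^*)\to0$. This is not justified: the source terms in the difference inequality contain $|s_t(t)|^2$ (coming from $y(\tfrac{s_{1t}}{s_1}-\tfrac{s_{2t}}{s_2})\tilde u_{2y}$ and the analogous boundary remainders), and $\int_0^{T^*}|s_t|^2\,dt$ is exactly $|s_1-s_2|^2_{W^{1,2}}$ up to a bounded constant, with no gain as $T^*\downarrow0$. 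The correct conclusion, as in the paper's (\ref{4-13}), is $|\tilde u|_{V(T^*)}\le C|s_1-s_2|_{W^{1,2}(0,T^*)}$ with $C$ bounded but not small; the contraction factor must instead be extracted afterwards, from the time integration in the trace step, namely $\int_0^{T^*}|\tilde u(t,1)|^2dt\le C_e\,|\tilde u|_{L^\infty(0,T^*;L^2)}\bigl(\sqrt{T^*}\,|\tilde u_y|_{L^2(Q(T^*))}+T^*|\tilde u|_{L^\infty(0,T^*;L^2)}\bigr)$, which produces factors $T^{*1/4}$, $\sqrt{T^*}$ as in (\ref{4-14})--(\ref{4-14-1}), plus the fact that $\Gamma_{T^*}(s_1)-\Gamma_{T^*}(s_2)$ vanishes at $t=0$. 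With that repair your conclusion stands, but as stated the ``one further shrinking of $T^*$'' has nothing to act on.

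Second, you keep $\bigl(\tfrac{1}{s_1^2}-\tfrac{1}{s_2^2}\bigr)\tilde u_{2yy}$ as an interior source and invoke ``$L^2(Q(T^*))$ bounds on $\tilde u_{2yy}$ delivered by Lemma \ref{lem4}''. Lemma \ref{lem4} (and the uniform estimate (\ref{4-1})) only provide $\tilde u_2\in W^{1,2}(0,T;L^2(0,1))\cap L^{\infty}(0,T;H^1(0,1))$, uniformly over $M_K(T)$; a uniform $L^2(Q)$ bound on $\tilde u_{2yy}$ is not stated and must be derived separately from the equation, $\tilde u_{2yy}=s_2^2\tilde u_{2t}-ys_2 s_{2t}\tilde u_{2y}$, together with (\ref{4-1}) and $s_2\in M_K(T)$ -- possible, but an extra step you should make explicit. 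The paper avoids this entirely by keeping the two diffusion terms together and integrating by parts the full difference, so the interior term becomes $\int_0^1\bigl(\tfrac{1}{s_1^2}\tilde u_{1y}-\tfrac{1}{s_2^2}\tilde u_{2y}\bigr)\tilde u_y\,dy$, which only needs the $L^\infty(0,T;L^2)$ bound on $\tilde u_{2y}$ from (\ref{4-1}); this is the cheaper route and also keeps all constants uniform over $M_K(T^*)$, which is what the fixed-point argument requires.
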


\begin{proof}
For $T>0$ and $l>0$ such that $s_0<l$, let $s\in M(T, s_0, l)$ and $\tilde{u}=\Psi(s)$. First, we note that it holds 
\begin{align}
& |\Psi(s)|_{W^{1,2}(0, T;L^2(0, 1))} + |\Psi(s)|_{L^{\infty}(0, T;H^1(0, 1))} \leq C \mbox{ for }s\in M_K(T),
\label{4-1}
\end{align}
where $C=C(T,\tilde{u}_0, K, l, b^*, \beta,s_0)$ is a positive constant depending on $T$, $\tilde{u}_0$, $K$, $l$, $b^*$, $\beta$ and $s_0$.

Next, we show that there exists $T_0\leq T$ such that $\Gamma_{T_0}:M_K(T_0)\to M_K(T_0)$ is well-defined. Let $K>0$ and $s\in M_K(T)$. First, by the definition of $\sigma$, we see that 
\begin{align}
\Gamma_{T}(s)(t) &= s_0 + \int_0^t a_0 \sigma(\Psi(s)(\tau, 1)) d\tau \geq s_0 \mbox{ for }t\in [0, T].
\label{4-2}
\end{align}
Also, by (\ref{1d}) and (\ref{4-1}), we have that $|\tilde{u}(t, 1)| \leq \sqrt{C_e}|\tilde{u}(t)|_{H^1(0, 1)} \leq \sqrt{C_e}C$ for a.e. $t\in (0, T)$. Hence, by $\sigma(r)\leq r$ for $r\in \mathbb{R}$ we obtain that 
\begin{align}
\Gamma_{T}(s)(t) \leq s_0 +a_0\sqrt{C_e} C T, \quad 
\int_0^t |\Gamma_T(s)(\tau)|^2 d\tau
\leq 2s^2_0T + 2a^2_0 T^3 (C_e C^2), 
\label{4-4}
\end{align}
and
\begin{align}
& \int_0^t |\Gamma'_T(s)(\tau)|^2d\tau \leq a^2_0\int_0^t|\Psi(s)(\tau, 1)|^2 d\tau 
\leq a^2_0 T C_e C^2.
\label{4-5}
\end{align}
Therefore,  by (\ref{4-4}) and (\ref{4-5}) we see that there exists $T_0\leq T$ such that $\Gamma_{T_0}(s)\in M_K(T_0)$.

Next, for $s_1$ and $s_2\in M_K(T_0)$, let $\tilde{u}_1=\Psi(s_1)$ and $\tilde{u}_2=\Psi(s_2)$ and set $\tilde{u}=\tilde{u}_1-\tilde{u}_2$, $s=s_1-s_2$.
Then, it holds that
\begin{align}
& \frac{1}{2}\frac{d}{dt}|\tilde{u}(t)|^2_H - \int_0^1 \left( \frac{1}{s^2_1(t)}\tilde{u}_{1yy}(t)-\frac{1}{s^2_2(t)}\tilde{u}_{2yy}(t)\right)\tilde{u}(t)dy \nonumber \\
&=\int_0^1 \left(\frac{y s_{1t}(t)}{s_1(t)} \tilde{u}_{1y}(t)-\frac{ys_{2t}(t)}{s_2(t)} \tilde{u}_{2y}(t)\right)\tilde{u}(t)dy.
\label{4-6}
\end{align}
For the second term of the left-hand side of (\ref{4-6}), we observe that 
\begin{align*}
& - \int_0^1 \left( \frac{1}{s^2_1(t)}\tilde{u}_{1yy}(t)-\frac{1}{s^2_2(t)}\tilde{u}_{2yy}(t)\right)\tilde{u}(t)dy\\
=& \int_0^1 \left(\frac{1}{s^2_1(t)}\tilde{u}_{1y}(t)-\frac{1}{s^2_2(t)}\tilde{u}_{2y}(t)\right) \tilde{u}_y(t) dy\\
& - \left (\frac{1}{s^2_1(t)}\tilde{u}_{1y}(t, 1)-\frac{1}{s^2_2(t)}\tilde{u}_{2y}(t, 1)\right)\tilde{u}(t, 1)
+ \left (\frac{1}{s^2_1(t)}\tilde{u}_{1y}(t, 0)-\frac{1}{s^2_2(t)}\tilde{u}_{2y}(t, 0)\right)\tilde{u}(t, 0)\\
=:& I_1+I_2+I_3.
\end{align*}
For the term $I_1$, the following estimate below holds:
\begin{align*}
& I_1= \frac{1}{s^2_1(t)}|\tilde{u}_{y}(t)|^2_{L^2(0, 1)}+ \int_0^1 \left(\frac{1}{s^2_1(t)}-\frac{1}{s^2_2(t)}\right)\tilde{u}_{2y}(t)\tilde{u}_y (t)dy\\
\geq & \frac{1}{s^2_1(t)}|\tilde{u}_{y}(t)|^2_{L^2(0, 1)} - \frac{2l|s(t)|}{s^3_0 s_1(t)}|\tilde{u}_{2y}(t)|_{L^2(0, 1)}|\tilde{u}_y(t)|_{L^2(0, 1)} \\
\geq & \left(1-\frac{\eta}{2}\right)\frac{1}{s^2_1(t)}|\tilde{u}_{y}(t)|^2_{L^2(0, 1)} -\frac{1}{2\eta}\left(\frac{2l}{s^3_0}\right)^2|s(t)|^2|\tilde{u}_{2y}(t)|^2_{L^2(0, 1)},
\end{align*}
where $\eta $ is arbitrary positive number. For $I_2$, we separate in the following way:
\begin{align*}
& -\left (\frac{1}{s^2_1(t)}\tilde{u}_{1y}(t, 1)-\frac{1}{s^2_2(t)}\tilde{u}_{2y}(t, 1)\right)\tilde{u}(t, 1)\\
=&a_0\hspace{-1mm}\left(\frac{\tilde{u}_1(t, 1)\sigma(\tilde{u}_1(t, 1))}{s_1(t)}-\frac{\tilde{u}_2(t, 1)\sigma(\tilde{u}_2(t, 1))}{s_2(t)}\right)\hspace{-1mm}\tilde{u}(t, 1)\\
=& a_0\left(\frac{1}{s_1(t)}\biggl(\tilde{u}_1(t, 1)\sigma(\tilde{u}_1(t, 1))-\tilde{u}_2(t, 1)\sigma(\tilde{u}_2(t, 1))\biggr)
 + \left(\frac{1}{s_1(t)}-\frac{1}{s_2(t)}\right)\tilde{u}_2(t, 1)\sigma(\tilde{u}_2(t, 1)) \right) \tilde{u}(t, 1)\\
=&: I_{21}+I_{22}.
\end{align*}
Similarly to (\ref{3-10}), the term $I_{21}$ is non-positive because the function $r\sigma(r)$ is monotone for $r\in \mathbb{R}$. 
For $I_{22}$, using the fact that $\sigma(r)\leq |r|$ for $r\in \mathbb{R}$ and (\ref{1d}), the following inequalities hold:
\begin{align}
\label{4-7}
|I_{22}| & = \left (\frac{s(t)}{s_1(t) s_2(t)} \right) a_0 \tilde{u}_2(t, 1) \sigma(\tilde{u}_2(t, 1)) \tilde{u}(t, 1) \nonumber \\
& \leq \frac{C_e (a_0\tilde{u}^2_2(t, 1))^2}{2s^2_0s^2_1(t)}|\tilde{u}(t)|_{H^1(0, 1)}|\tilde{u}(t)|_{L^2(0, 1)} + \frac{1}{2}|s(t)|^2. 
\end{align}
Put $L^{(1)}_{s_2}(t) = C_e a^2_0|\tilde{u}_2(t, 1)|^4/2s^2_0 $. As for $I_2$, we consider the term $I_3$ as follows:
\begin{align*}
& \left (\frac{1}{s^2_1(t)}\tilde{u}_{1y}(t, 0)-\frac{1}{s^2_2(t)}\tilde{u}_{2y}(t, 0)\right)\tilde{u}(t, 0)\\
=&-\left(\frac{1}{s_1(t)}\beta(b(t)-\gamma \tilde{u}_1(t, 0))-\frac{1}{s_2(t)} \beta(b(t)-\gamma \tilde{u}_2(t, 0))\right)\tilde{u}(t, 0)\\
= & -\frac{1}{s_1(t)}\biggl(\beta(b(t)-\gamma \tilde{u}_1(t, 0))-\beta(b(t)-\gamma \tilde{u}_2(t, 0))\biggr)\tilde{u}(t, 0)\\
& -\left(\frac{1}{s_1(t)}-\frac{1}{s_2(t)}\right)\beta(b(t)-\gamma \tilde{u}_2(t, 0))\tilde{u}(t, 0). 
\end{align*}
Then, by using (\ref{1d}) and (A3), we have that
\begin{align}
|I_3| 
& \leq  \frac{\beta C_e\gamma }{s_1(t)} |\tilde{u}(t)|_{H^1(0, 1)}|\tilde{u}(t)|_{L^2(0, 1)} \nonumber \\
& + \frac{(\beta(b^*+\gamma |\tilde{u}_2(t, 0)|)^2 C_e}{2s^2_0s^2_1(t)}|\tilde{u}(t)|_{H^1(0, 1)}|\tilde{u}(t)|_{L^2(0, 1)} + \frac{1}{2}|s(t)|^2 \mbox{ for }t\in [0, T_0].
\label{4-8}
\end{align}
For the right-hand side of (\ref{4-6}), we can write as follows:
\begin{align*}
& \int_0^1 \left(\frac{y s_{1t}(t)}{s_1(t)} \tilde{u}_{1y}(t)-\frac{y s_{2t}(t)}{s_2(t)} \tilde{u}_{2y}(t)\right)\tilde{u}(t)dy\\
=& \int_0^1 \frac{y s_{1t}(t)}{s_1(t)} \tilde{u}_y(t)\tilde{u}(t)dy+\int_0^1 \frac{ys_t(t)}{s_1(t)} \tilde{u}_{2y}(t)\tilde{u}(t)dy +\int_0^1 \left(\frac{1}{s_1(t)} -\frac{1}{s_2(t)} \right)ys_{2t}(t)\tilde{u}_{2y}(t)\tilde{u}(t)dy \\
&:=I_{41}+I_{42} + I_{43}.
\end{align*}
The three terms are estimated in the following way:
\begin{align*}
I_{41} &\leq \frac{\eta }{2s^2_1(t)}|\tilde{u}_y(t)|^2_{L^2(0, 1)} + \frac{1}{2\eta }|s_{1t}(t)|^2|\tilde{u}(t)|^2_{L^2(0, 1)}, \\
I_{42} &\leq \frac{1}{2s_0}\biggl(|s_t(t)|^2+|\tilde{u}_{2y}(t)|^2_{L^2(0, 1)}|\tilde{u}(t)|^2_{L^2(0, 1)}\biggr), \\
I_{43} &\leq \frac{1}{2s^2_0}\biggl(|s(t)|^2|\tilde{u}_{2y}(t)|^2_{L^2(0, 1)}+|s_{2t}(t)|^2|\tilde{u}(t)|^2_{L^2(0, 1)}\biggr).
\end{align*}
Then, by (\ref{4-6})-(\ref{4-8}) we obtain that 
\begin{align}
& \frac{1}{2}\frac{d}{dt}|\tilde{u}(t)|^2_{L^2(0, 1)}+(1-\eta)\frac{1}{s^2_1(t)}|\tilde{u}_y(t)|^2_{L^2(0, 1)} \nonumber \\
\leq & \frac{\beta C_e\gamma }{s_1(t)} |\tilde{u}(t)|_{H^1(0, 1)}|\tilde{u}(t)|_{L^2(0, 1)} \nonumber \\
& + \frac{1}{s^2_1(t)}\biggl(L^{(1)}_{s_2}(t) + \frac{(\beta (b^*+\gamma |\tilde{u}_2(t, 0)|)^2C_e}{2s^2_0}\biggr) |\tilde{u}(t)|_{H^1(0, 1)}|\tilde{u}(t)|_{L^2(0, 1)} \nonumber \\
& + \left(\frac{1}{2\eta }|s_{1t}(t)|^2 +  \frac{1}{2s_0}|\tilde{u}_{2y}(t)|^2_{L^2(0, 1)} + \frac{1}{2s^2_0}|s_{2t}(t)|^2 \right)|\tilde{u}(t)|^2_{L^2(0, 1)} \nonumber \\
& + \left( \frac{1}{2s^2_0}|\tilde{u}_{2y}(t)|^2_{L^2(0, 1)} + \frac{1}{2\eta}\left(\frac{2l}{s^3_0}\right)^2|\tilde{u}_{2y}|^2_{L^2(0, 1)} + 1 \right)|s(t)|^2 + \frac{1}{2s_0}|s_t(t)|^2.
\label{4-9}
\end{align}
Here, by (\ref{1d}) and (\ref{4-1}), we see that
\begin{align}
|\tilde{u}_i(t, 0)|^2 & \leq C_e (|\tilde{u}_{iy}(t)|_{L^2(0, 1)}|\tilde{u}_i(t)|_{L^2(0, 1)} + |\tilde{u}_i(t)|^2_{L^2(0, 1)}) \nonumber \\
& \leq 2C_eC^2 \mbox{ for }t\in [0, T_0],
\label{4-10}
\end{align}
where $C$ is the same constant as in (\ref{4-1}). Then, by (\ref{4-10}) we note that $\{ L^{(1)}_{s_2} | s_2\in M_k(T)\}$ is bounded in $L^{\infty}(0, T_0)$. Also, by putting $C_5=(\beta (b^*+2\gamma C_eC^2))^2C_e$ and Young's inequality it follows that 
\begin{align*}
& \frac{\beta C_e\gamma }{s_1(t)}|\tilde{u}(t)|_{H^1(0, 1)}|\tilde{u}(t)|_{L^2(0, 1)}\\
\leq & \frac{\beta C_e\gamma }{s_1(t)}\biggl(|\tilde{u}_y(t)|_{L^2(0, 1)}|\tilde{u}(t)|_{L^2(0, 1)} + |\tilde{u}(t)|^2_{L^2(0, 1)}\biggr)\\
\leq & \beta C_e \gamma \left(\frac{\eta}{2s^2_1(t)}|\tilde{u}_y(t)|^2_{L^2(0, 1)} + (\frac{1}{2\eta } + \frac{1}{s_0})|\tilde{u}(t)|^2_{L^2(0, 1)} \right),
\end{align*}
and
\begin{align*}
& \left(L^{(1)}_{s_2}(t) + \frac{C_5}{2s^2_0}\right) \frac{1}{s^2_1(t)}|\tilde{u}(t)|_{H^1(0, 1)}|\tilde{u}(t)|_{L^2(0, 1)} \\
\leq & \left(L^{(1)}_{s_2}(t) + \frac{C_5}{2s^2_0}\right) \frac{1}{s^2_1(t)}(|\tilde{u}_y(t)|_{L^2(0, 1)}|\tilde{u}(t)|_{L^2(0, 1)} + |\tilde{u}(t)|^2_{L^2(0, 1)})\\
\leq &  \left(L^{(1)}_{s_2}(t) + \frac{C_5}{2s^2_0}\right) \left [\frac{1}{s^2_1(t)}\frac{\eta }{2}|\tilde{u}_y(t)|^2_{L^2(0, 1)} + \frac
{1}{s^2_0}(\frac{1}{2\eta } + 1)|\tilde{u}(t)|^2_{L^2(0, 1)} \right]. 
\end{align*}
Accordingly, by applying these results to (4.8) and taking a suitable $\eta=\eta_0$, we have
\begin{align}
& \frac{1}{2}\frac{d}{dt}|\tilde{u}(t)|^2_{L^2(0, 1)}+\frac{1}{2}\frac{1}{s^2_1(t)}|\tilde{u}_y(t)|^2_{L^2(0, 1)} \nonumber \\
\leq &  \beta C_e \gamma \left(\frac{1}{2\eta_0 } + \frac{1}{s_0} \right)|\tilde{u}(t)|^2_{L^2(0, 1)} \nonumber \\
& + \left(L^{(1)}_{s_2}(t) + \frac{C_5}{2s^2_0}\right)\frac{1}{s^2_0}\left(\frac{1}{2\eta_0 } + 1\right)|\tilde{u}(t)|^2_{L^2(0, 1)} \nonumber \\
& + \left(\frac{1}{2\eta_0 }|s_{1t}(t)|^2 +  \frac{1}{2s_0}|\tilde{u}_{2y}(t)|^2_{L^2(0, 1)} + \frac{1}{2s^2_0}|s_{2t}(t)|^2 \right)|\tilde{u}(t)|^2_{L^2(0, 1)} \nonumber \\
& + \left(\frac{1}{2s^2_0}|\tilde{u}_{2y}(t)|^2_{L^2(0, 1)} + \frac{1}{2\eta_0}\left(\frac{2l}{s^3_0}\right)^2|\tilde{u}_{2y}(t)|^2_{L^2(0, 1)}+ 1 \right)|s(t)|^2 + \frac{1}{2s_0}|s_t(t)|^2.
\label{4-11}
\end{align}
Now, we put the summation of all coefficients of $|\tilde{u}(t)|^2_{L^2(0, 1)}$ by $L^{(2)}_{s}(t)$ for $t\in [0, T_0]$ and 
$L^{(3)}_{s_2}(t)=|\tilde{u}_{2y}(t)|^2_{L^2(0, 1)}/2s^2_0 + (4l^2|\tilde{u}_{2y}(t)|^2_{L^2(0, 1)})/2\eta_0s^6_0+ 1+ 1/2s_0$. Then, we have
\begin{align}
& \frac{1}{2}\frac{d}{dt}|\tilde{u}(t)|^2_{L^2(0, 1)}+\frac{1}{2}\frac{1}{s^2_1(t)}|\tilde{u}_y(\tau)|^2_{L^2(0, 1)} \nonumber \\
\leq & L^{(2)}_{s}(t)|\tilde{u}(t)|^2_{L^2(0, 1)} + L^{(3)}_{s_2}(t)(|s(t)|^2 + |s_t(t)|^2) \mbox{ for }t\in [0, T_0].
\label{4-12} 
\end{align}
Here, using (\ref{4-1}) and the fact that $s_i\in M_K(T_0)$ for $i=1, 2$, we see that $L^{(2)}_s\in L^1(0, T_0)$ and $L^{(3)}_{s_2}\in L^{\infty}(0, T_0)$.
Therefore, Gronwall's inequality guarantees that
\begin{align}
& \frac{1}{2}|\tilde{u}(t)|^2_{L^2(0, 1)}+\frac{1}{2}\frac{1}{s^2_1(t)}\int_0^t|\tilde{u}_y(\tau)|^2_{L^2(0, 1)} d\tau \nonumber \\
\leq & \left( |L^{(3)}_{s_2}|_{L^{\infty}(0, T_0)}|s|^2_{W^{1,2}(0, T)} \right) e^{2\int_0^t L^{(2)}_{s}(\tau)d\tau} \mbox{ for }t\in [0, T_0].
\label{4-13}
\end{align}
By using (\ref{4-13}) we show that there exists $T^*\leq T_0$ such that $\Gamma_{T^*}$ is a contraction mapping on the closed subset of $M_K(T^*)$.
To do so, from the subtraction of the time derivatives of $\Gamma_{T_0}(s_1)$ and $\Gamma_{T_0}(s_2)$ and relying on (\ref{1d}) and (\ref{4-13}), we have for $T_1\leq T_0$ the following estimate:
\begin{align}
& |(\Gamma_{T_1}(s_1))_t-(\Gamma_{T_1}(s_2))_t|_{L^2(0, T_1)} \nonumber \\
\leq & a_0 \biggl(|\sigma(\tilde{u}_1(\cdot, 1))-\sigma(\tilde{u}_2(\cdot, 1))|_{L^2(0, T_1)} \biggr) \nonumber \\
\leq & a_0\sqrt{C_e}\biggl(\int_0^{T_1}(|\tilde{u}_y(t)|_{L^2(0, 1)} |\tilde{u}(t)|_{L^2(0, 1)} + |\tilde{u}(t)|^2_{L^2(0, 1)})dt \biggr)^{1/2} \nonumber \\
\leq & a_0 \sqrt{C_e} \left(|\tilde{u}|^{\frac{1}{2}}_{L^{\infty}(0, T;L^2(0, 1))}\left(\int_0^{T_1}|\tilde{u}_y(t)|_{L^2(0, 1)} dt \right)^{\frac{1}{2}} + \sqrt{T_1} |\tilde{u}|_{L^{\infty}(0, T; L^2(0, 1))} \right).  
\label{4-14}
\end{align}
Using (\ref{4-13}), we obtain
\begin{align}
|\Gamma_{T_1}(s_1)-\Gamma_{T_1}(s_2)|_{W^{1,2}(0, T_1)} \leq T_1C_6 \biggl (T^{\frac{1}{4}}_1|s|_{W^{1,2}(0, T_1)}+ \sqrt{T_1}|s|_{W^{1,2}(0, T_1)}\biggr),
\label{4-14-1}
\end{align}
where $C_6$ is a positive constant obtained by (\ref{4-13}). Therefore, by (\ref{4-14}) and (\ref{4-14-1}) we see that there exists $T^*\leq T_0$ such that $\Gamma_{T^*}$ is a contraction mapping on a closed subset of $M_K(T^*)$.
\end{proof}

From Lemma \ref{lem6}, by applying Banach's fixed point theorem, there exists $s\in M_K(T^*)$, where $T^*$ is the same as in Lemma \ref{lem6} such that
$\Gamma_{T^*}(s)=s$. This implies that $(\mbox{PC})(\tilde{u}_0, s_0, b)$ has a unique solution $(s, \tilde{u})$ on $[0, T^*]$. Thus, we can prove Theorem \ref{t2}. Moreover, this shows that 
by the change of variables (\ref{2-7})
a pair of the function $(s, u)$ is a solution of $(\mbox{P})(u_0, s_0, b)$ on $[0, T^*]$.

At the end of this section, we show the boundedness of  the solution to $(\mbox{P})(u_0, s_0, b)$. 

%
%
%
%

\begin{lem}
\label{lem7}
Let $T>0$ and $(s, u)$ be a solution of $(\mbox{P})(u_0, s_0, b)$ on $[0, T]$. Then, $0 \leq u(t)\leq b^*/\gamma$ on $[0, s(t)]$ for $t\in [0, T]$.
\end{lem}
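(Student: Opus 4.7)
The plan is to run a standard truncation (maximum principle) argument, once for the upper bound and once for the lower bound, exploiting the signs built into the boundary conditions. The main technical points are (i) handling the moving integration limit via the Reynolds transport identity and (ii) verifying that every boundary term has a favorable sign.

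For the upper bound, set $v := u - b^*/\gamma$. Then $v$ solves $v_t - v_{zz}=0$ on $Q_s(T)$, together with $v_z(t,0)=\beta(\gamma v(t,0)+b^*-b(t))$ and $-v_z(t,s(t))=(v(t,s(t))+b^*/\gamma)s_t(t)$. I would test the equation with $v^+=\max(v,0)$ and integrate over $[0,s(t)]$. Using the transport identity
\[
\frac{d}{dt}\int_0^{s(t)}\frac{(v^+)^2}{2}\,dz=\int_0^{s(t)}v^+v_t\,dz+\frac{s_t(t)}{2}\bigl(v^+(t,s(t))\bigr)^2
\]
together with an integration by parts in $z$, one obtains
\[
\frac{d}{dt}\int_0^{s(t)}\frac{(v^+)^2}{2}\,dz+\int_0^{s(t)}|v^+_z|^2\,dz+B_0(t)+B_s(t)=0,
\]
where $B_0(t)=v_z(t,0)v^+(t,0)$ and $B_s(t)=-v_z(t,s(t))v^+(t,s(t))-\tfrac{s_t(t)}{2}(v^+(t,s(t)))^2$. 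At $z=0$, $B_0(t)=\beta\gamma(v^+(t,0))^2+\beta(b^*-b(t))v^+(t,0)\ge 0$ thanks to (A2). At $z=s(t)$, whenever $v^+(t,s(t))>0$ one has $u(t,s(t))>b^*/\gamma>0$, so $s_t(t)=a_0 u(t,s(t))>0$, and a direct computation gives $B_s(t)=s_t(t)v^+(t,s(t))\bigl(\tfrac12 v^+(t,s(t))+b^*/\gamma\bigr)\ge 0$. Since $v^+(0,\cdot)\equiv 0$ by (A3), integrating in time yields $v^+\equiv 0$, i.e.\ $u\le b^*/\gamma$.

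For the lower bound, put $w:=-u$, which satisfies $w_t-w_{zz}=0$ with $w_z(t,0)=\beta(\gamma w(t,0)+b(t))$ and $-w_z(t,s(t))=w(t,s(t))s_t(t)$. Testing the equation with $w^+$ and proceeding as above leads to the same type of identity. The boundary contribution at $z=0$ is $\beta\gamma(w^+(t,0))^2+\beta b(t)w^+(t,0)\ge 0$ since $b\ge b_*>0$. The key observation at $z=s(t)$ is that $w^+(t,s(t))>0$ forces $u(t,s(t))<0$, hence $\sigma(u(t,s(t)))=0$ and $s_t(t)=0$; therefore both the Reynolds boundary term and $-w_z(t,s(t))w^+(t,s(t))=w(t,s(t))s_t(t)w^+(t,s(t))$ vanish identically. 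This is where the positive part inside $\sigma$ in (\ref{1-4}) does exactly the work it was introduced to do. Using $w^+(0,\cdot)\equiv 0$ from (A3), we conclude $w^+\equiv 0$, i.e.\ $u\ge 0$.

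The step I expect to require the most care is the bookkeeping at the free boundary: justifying the transport identity with the regularity $u\in L^\infty$, $u_t,u_{zz}\in L^2$, $t\mapsto |u_z(t,\cdot)|_{L^2}$ bounded (from (S1)), and showing that $v^+, w^+$ inherit enough regularity in $Q_s(T)$ so that the chain-rule computation is legitimate. In practice I would perform all manipulations on the cylindrical problem $(\mathrm{PC})(\tilde u_0,s_0,b)$, where $\tilde u\in W^{1,2}(Q(T))\cap L^\infty(0,T;H^1(0,1))$ makes the truncation standard, and then transport the conclusion back to $Q_s(T)$ via (\ref{2-7}).
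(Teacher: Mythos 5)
Your proof is correct and follows essentially the same truncation (maximum-principle) argument as the paper: the paper tests with $[-u]^+$ and $[u-b^*/\gamma]^+$, exploits the sign of the Robin term at $z=0$, and observes that at $z=s(t)$ the boundary contribution vanishes because either the positive part is zero or $\sigma(u(t,s(t)))=0$ forces $s_t(t)=0$, which is precisely your key observation. The only cosmetic differences are the order in which you treat the two bounds and the introduction of the auxiliary variables $v,w$ rather than writing the truncations directly.
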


\begin{proof}
First, we show that $u(t) \geq 0$ on $[0, s(t)]$ for $t\in [0, T]$. From (1.1), we have that 
\begin{align}
& \frac{1}{2}\frac{d}{dt}\int_0^{s(t)}|[-u(t)]^{+}|^2 dz -\frac{s_t(t)}{2}|[-u(t, s(t))]^+|^2 \nonumber \\
& + \int_0^{s(t)} u_{zz}(t)[-u(t)]^+ dz =0 \mbox{ for a.e. }t\in [0, T].
\label{4-15}
\end{align}
By the boundary conditions (1.2) and (1.3) it follows that 
\begin{align*}
u_z(t, s(t))[-u(t, s(t))]^+ & = -u(t, s(t))s_t(t)[-u(t, s(t))]^+ = s_t(t)|[-u(t, s(t))]^+|^2
\end{align*}
and
\begin{align*}
&-u_z(t, 0)[-u(t, 0)]^+ =\beta(b(t)-\gamma u(t, 0))[-u(t, 0)]^+\geq 0.
\end{align*}
Therefore, we derive that
\begin{align}
\frac{d}{dt}\int_0^{s(t)}|[-u(t)]^{+}|^2 dz + \frac{s_t(t)}{2}|[-u(t, s(t))]^+|^2 + \int_0^{s(t)}|[-u(t)]_z^+|^2 dz \leq 0 \mbox{ for a.e. }t\in [0, T].
\label{4-16}
\end{align}
Note that by $s_t(t)=a_0\sigma(u(t, s(t))$, the second term in the left-hand side of (\ref{4-16}) is equal to 0. 
Therefore, by integrating (\ref{4-16}) over $[0, T]$ we conclude that $u \geq 0$ on $[0, s(t)]$ for $t\in [0, T]$. 

Next, we show that $u(t) \leq b^*/\gamma $ on $[0, s(t)]$ for $t\in [0, T]$.
Put $U(t, z)=[u(t, z)-b^*/\gamma]^+$ for $z\in [0, s(t)]$ and $t\in [0, T]$. Then, we have that
\begin{align}
\frac{1}{2}\frac{d}{dt}\int_0^{s(t)}|U(t, z)|^2 dz -\frac{s_t(t)}{2}|U(t, s(t))|^2 -\int_0^{s(t)} u_{zz}(t)U(t, z)dz =0 \mbox{ for a.e. }t\in[0, T].
\label{4-20}
\end{align}
Using the boundary condition (\ref{1-2}), it holds that 
\begin{align*}
& -u_z(t, s(t))U(t, s(t)) = u(t, s(t))s_t(t) U(t, s(t)) \\
= & s_t(t)\left(u(t, s(t))-\frac{b^*}{\gamma}\right)U(t, s(t)) + s_t(t) \frac{b^*}{\gamma} U(t, s(t)) \\  
=& s_t(t)|U(t, s(t))|^2 + s_t(t) \frac{b^*}{\gamma}U(t, s(t)).
\end{align*}
Also, by (\ref{1-3}) and $b\leq b^*$, we observe that 
\begin{align*}
& u_z(t, 0)U(t, 0) =  -\beta(b(t)-\gamma u(t, 0))U(t, 0)\\
= & \beta (\gamma u(t, 0)-b^* + b^*-b(t))U(t, 0) \\
= & \beta \gamma |U(t, 0)|^2 + \beta(b^*-b(t))U(t, 0) \geq 0.
\end{align*}
By applying the above two results to (\ref{4-20}) we obtain that 
\begin{align}
& \frac{1}{2}\frac{d}{dt}\int_0^{s(t)}|U(t, z)|^2 dz + \int_0^{s(t)} |U_z(t, z)|^2 dz \nonumber \\
& + \frac{s_t(t)}{2}|U(t, s(t))|^2 + s_t(t)\frac{b^*}{\gamma}U(t, s(t)) \leq 0 \mbox{ for a.e. }t\in[0, T].
\label{4-21}
\end{align}
Here, by $s_t(t)=a_0\sigma(u(t, s(t)))$ we notice that $s_t(t) \geq 0$ on $[0, T]$, and the third and forth terms in the left-hand side of (\ref{4-21}) are non-negative. Therefore, we have that 
\begin{align}
\frac{1}{2}\frac{d}{dt}\int_0^{s(t)}|U(t, z)|^2 dz + \int_0^{s(t)} |U_z(t, z)|^2 dz \leq 0 \mbox{ for a.e. }t\in[0, T].
\label{4-22}
\end{align}
Finally,  by integrating (\ref{4-22}) over $[0, t]$ for $t\in [0, T]$ and using (A3), we see that $u(t) \leq b^*/\gamma$ on $[0, s(t)]$ for $t\in [0, T]$.
Thus, Lemma \ref{lem7} is proven.
\end{proof}

By Lemma \ref{lem7}, we can conclude that Theorem \ref{t1} holds. 

%
%
%
%
%
%

\section{Proof of Theorem \ref{t3}}
\label{GE}

In this section, we prove Theorem \ref{t3} which ensure the existence and uniqueness of a globally-in-time solution of (P)$(u_0, s_0, b)$. 
First, we provide uniform estimates of a solution of (P)$(u_0, s_0, b)$. 
\begin{lem}
\label{lem8}
Let $(s, u)$ be a solution of (P)$(u_0, s_0, b)$ on $[0, T]$ satisfying $0 \leq u \leq b^*/\gamma$ on $[0, s(t)]$ for $t\in [0, T]$. Then, there exists a positive constant $\tilde{C}$ which is independent of $T$ such that 
\begin{align}
\label{5-1}
& \int_{0}^t|u_t(\tau)|^2_{L^2(0, s(\tau))}d\tau + |u_z(t)|^2_{L^2(0, s(t))} \leq \tilde{C}\mbox{ for all }t\in (0, T). 
\end{align}
\end{lem}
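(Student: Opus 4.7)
The natural approach is to test the heat equation (\ref{1-1}) against $u_t$ and integrate over $(0,s(t))$. After integrating by parts in $z$ and applying the Reynolds transport identity
$$\frac{d}{dt}\int_0^{s(t)} \frac{|u_z|^2}{2}\, dz = \int_0^{s(t)} u_z u_{tz}\, dz + \frac{s_t(t)}{2}|u_z(t,s(t))|^2,$$
one arrives at the energy identity
$$\int_0^{s(t)} |u_t|^2 dz + \frac{d}{dt}\int_0^{s(t)} \frac{|u_z|^2}{2}\, dz = u_t(t,s(t))\, u_z(t,s(t)) - u_t(t,0)\, u_z(t,0) + \frac{s_t(t)}{2} |u_z(t,s(t))|^2.$$
The strategy is then to rewrite the right-hand side as a sum of total time derivatives of uniformly bounded quantities, a dissipative term with favorable sign, and a residual controlled by the data.

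At the moving boundary, set $U(t):=u(t,s(t))$. By Lemma \ref{lem7} we have $U\ge 0$, so $\sigma(U)=U$ and (\ref{1-3})--(\ref{1-4}) give $u_z(t,s(t))=-a_0 U^2(t)$ and $s_t(t)=a_0 U(t)$. The chain-rule identity $u_t(t,s(t)) = U'(t) - u_z(t,s(t)) s_t(t)$ then yields, after a short computation,
$$u_t(t,s(t))\, u_z(t,s(t)) + \frac{s_t(t)}{2}|u_z(t,s(t))|^2 = -\frac{d}{dt}\!\left(\frac{a_0}{3} U^3(t)\right) - \frac{a_0^3}{2} U^5(t).$$
At the fixed boundary, setting $W(t):=u(t,0)$ and using (\ref{1-2}),
$$-u_t(t,0)\, u_z(t,0) = \beta\bigl(b(t)-\gamma W(t)\bigr) W'(t) = \frac{d}{dt}\!\left[\beta b(t) W(t) - \frac{\beta\gamma}{2} W^2(t)\right] - \beta\, b_t(t)\, W(t).$$

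Integrating the resulting identity over $[0,t]$, the total time derivatives produce boundary values at $t$ and $0$. By Lemma \ref{lem7} we have $0\le U(t), W(t)\le b^*/\gamma$, so both $\tfrac{a_0}{3}U^3(t)$ and $\beta b(t)W(t)-\tfrac{\beta\gamma}{2}W^2(t)$ are bounded by constants depending only on $a_0,\beta,\gamma,b^*$. The dissipative contribution $-\tfrac{a_0^3}{2}\int_0^t U^5 \, d\tau$ has a sign favorable to the desired inequality and can simply be discarded. The remaining residual $-\beta\int_0^t b_t(\tau) W(\tau)\, d\tau$ is estimated using $0\le W\le b^*/\gamma$ by a constant multiple of $\int_0^t |b_t(\tau)|\, d\tau$, which remains bounded independently of $T$ under the global integrability of $b_t$ underlying (A2). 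Collecting these bounds together with the initial contribution $\tfrac12|u_0'|^2_{L^2(0,s_0)}$ on the left, one obtains the claimed inequality with $\tilde C$ depending only on $s_0$, $|u_0'|_{L^2(0,s_0)}$, $a_0,\beta,\gamma,b^*$ and the global norm of $b_t$.

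The main obstacle is the analysis of the moving-boundary contribution: the three pieces $u_t u_z$, $\tfrac{s_t}{2}|u_z|^2$ and the chain-rule contribution from $\tfrac{d}{dt}U(t)$ must be balanced precisely using the kinetic law (\ref{1-4}) and the non-negativity of $U$ so that the net outcome is a total derivative of a bounded quantity plus a non-positive remainder, rather than a term whose time integral grows with $t$. Once that balance is secured, everything else reduces to standard energy-method manipulations combined with the $L^\infty$ bound of Lemma \ref{lem7}.
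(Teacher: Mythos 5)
Your proposal is correct and arrives at exactly the bound the paper proves, but it follows a genuinely different route. The paper works in the rescaled cylindrical domain $Q(T)$, tests the transformed PDE against the difference quotient $v_h(t)=(\tilde u(t)-\tilde u(t-h))/h$ multiplied by $s(t)$, and passes to the limit $h\to 0$ using convexity inequalities for each boundary contribution; because $s(t)\tilde u_t = u_t + \tfrac{z}{s}s_t u_z$ in physical variables, this produces spurious transport terms (the double integrals with $\tfrac{z}{s}s_t u_z u_t$ and $\tfrac{s_t}{s}|u_z|^2$ in (5.14)--(5.17)) that must then be recognised and cancelled against one another after invoking the PDE once more. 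Your physical-domain energy identity via the Reynolds transport theorem avoids this back-and-forth entirely: testing against $u_t$ directly, the whole moving-boundary contribution collapses, as you show, into a single total derivative $-\tfrac{d}{dt}\bigl(\tfrac{a_0}{3}U^3\bigr)$ plus the \emph{exact} non-positive dissipation $-\tfrac{a_0^3}{2}U^5$. That dissipation term is actually invisible in the paper's argument, which only obtains the convexity inequality $a_0\tilde u^2(t,1)v_h(t,1)\ge \tfrac{a_0}{3h}(\tilde u^3(t,1)-\tilde u^3(t-h,1))$; your identity is therefore a sharper bookkeeping of the same structure. Your fixed-boundary decomposition matches the paper's function $\Phi(b,r)=-\beta(br-\tfrac{\gamma}{2}r^2)$ term by term.

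Two caveats. First, the paper's difference-quotient machinery is not decorative: the solution class (S1) only gives $u_t\in L^2(Q_s(T))$, so the pointwise boundary values $u_t(t,0)$, $u_t(t,s(t))$, the chain rule $\frac{d}{dt}u(t,s(t))=u_t(t,s(t))+u_z(t,s(t))s_t(t)$, and the Reynolds identity all require justification; the paper's limit-of-difference-quotients argument (together with the absolute continuity of $t\mapsto\int_0^{s(t)}|u_z|^2dz$ noted there) is what rigorises precisely these steps. Your proposal should acknowledge that these formal manipulations need that support. Second, in your last paragraph you attribute ``global integrability of $b_t$'' to (A2); in fact (A2) only gives $b_t\in L^2(0,T)$ with $T$ fixed, so $\int_0^{t_1}|b_t|\,dt\le\sqrt{T}\,|b_t|_{L^2(0,T)}$ still carries a $T$-dependence. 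The genuinely $T$-uniform version with $b_t\in L^1(0,\infty)$ is hypothesis (A2)$'$, which the paper invokes only in Lemma 6.3 for the large-time behaviour. For Lemma 5.1 as used inside the fixed-horizon global-existence argument, the $T$-dependence is harmless, but the attribution is worth correcting.
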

\begin{proof}
Let $(s, u)$ be a solution of (P)$(u_0, s_0, b)$ on $[0, T]$ such that $0\leq u \leq b^* /\gamma$ on $Q_s(T)$. Then, by the change of variables (\ref{2-0}) 
we see that $(s, \tilde{u}$) is a solution of (PC)$(\tilde{u}_0, s_0, b)$ on $[0, T]$ in the sense of Definition \ref{def1} and satisfies that $0\leq \tilde{u} \leq b^*/\gamma $ on $Q(T)$. Now, we put $v_h(t)=\frac{\tilde{u}(t)-\tilde{u}(t-h)}{h}$ for $h>0$ and $u(t)=u(0)=u_0$ and $b(t)=b(0)$ for $t<0$. 
By (\ref{1-1}),  it holds that 
\begin{align}
\label{5-2}
\int_0^1 \tilde{u}_t(t) s(t)v_h(t) dy -\int_0^1\frac{1}{s(t)}\tilde{u}_{yy}(t)v_h(t) dy =\int_0^1 \frac{ys_t(t)\tilde{u}_y(t)}{s(t)} s(t) v_h(t) dy \quad \mbox{ for }t\in [0, T]. 
\end{align}
Then, using (\ref{1-2})-(\ref{1-4}) and $s_t(t)=a_0\sigma(\tilde{u}(t, 1))=a_0\tilde{u}(t, 1)$ for $t\in [0, T]$, we observe that 
\begin{align}
\label{5-3}
& -\int_0^1\frac{1}{s(t)}\tilde{u}_{yy}(t) \frac{\tilde{u}(t)-\tilde{u}(t-h)}{h} dy \nonumber \\
= & a_0 \tilde{u}^2(t, 1)v_h(t, 1) -\beta(b(t)-\gamma \tilde{u}(t, 0)) v_h(t, 0) + \int_0^1 \frac{1}{s(t)}\tilde{u}_y(t)v_{hy}(t) dy, 
\end{align}
and 
\begin{align}
\label{5-4}
& \int_0^1 \frac{1}{s(t)}\tilde{u}_y(t)v_{hy}(t) dy \nonumber \\
\geq & \frac{1}{2h}\int_0^1 \frac{1}{s(t)}(|\tilde{u}_y(t)|^2-|\tilde{u}_y(t-h)|^2)dy \nonumber \\
= & \frac{1}{2h}\biggl[ \int_0^{s(t)}|u_z(t)|^2 dz - \int_0^{s(t-h)}\frac{s(t-h)}{s(t)}|u_z(t-h)|^2 dz\biggr] \nonumber \\
= & \frac{1}{2h}\biggl[ \int_0^{s(t)}|u_z(t)|^2 dz - \int_0^{s(t-h)}|u_z(t-h)|^2 dz + \int_0^{s(t-h)}\frac{s(t)-s(t-h)}{s(t)}|u_z(t-h)|^2 dz\biggr].
\end{align}
Here, for $t\in [0, T]$ the following inequality holds:
\begin{align}
\label{5-5}
a_0 \tilde{u}^2(t, 1)v_h(t, 1) &= a_0 \frac{\tilde{u}^3(t, 1)-\tilde{u}^2(t, 1)\tilde{u}(t-h, 1)}{h} \nonumber \\
& \geq a_0 \frac{\tilde{u}^3(t, 1)-\tilde{u}^3(t-h, 1)}{3h}.
\end{align}
Also, by introducing $\Phi(b(t), r)=-\beta(b(t)r-\frac{\gamma }{2}r^2)$ for $r\in \mathbb{R}$, it is easy to see that $\frac{\partial^2}{\partial r^2}\Phi(b(t), r)=\beta \gamma \geq 0$ for $r\in \mathbb{R}$. Hence, for $t\in [0, T]$, $\Phi(b(t), \tilde{u}(t, 0))$ is convex with respect to the second component so that we can see that the following inequality holds. 
\begin{align}
\label{5-6}
-\beta(b(t)-\gamma \tilde{u}(t, 0))v_h(t, 0) \geq \frac{\Phi(b(t), \tilde{u}(t, 0))-\Phi(b(t), \tilde{u}(t-h, 0))}{h} 
\mbox{ for }t\in [0, T].
\end{align}
Combining (\ref{5-2})-(\ref{5-6}) with (\ref{5-1}) , we have 
\begin{align}
\label{5-7} 
& \int_0^1\tilde{u}_t(t) s(t) v_h(t)dy \nonumber \\
& + \frac{1}{2h}\biggl[ \int_0^{s(t)}|u_z(t)|^2 dz - \int_0^{s(t-h)}|u_z(t-h)|^2 dz + \int_0^{s(t-h)}\frac{s(t)-s(t-h)}{s(t)}|u_z(t-h)|^2 dz\biggr] \nonumber \\
& + a_0 \frac{\tilde{u}^3(t, 1)-\tilde{u}^3(t-h, 1)}{3h} + \frac{\Phi(b(t), \tilde{u}(t, 0))-\Phi(b(t), \tilde{u}(t-h, 0))}{h} \nonumber \\
\leq & \int_0^1 ys_t(t)\tilde{u}_y(t) v_h(t) dy \mbox{ for }t\in [0, T].
\end{align}
Now, we integrate (\ref{5-7}) over $[0, t_1]$ for $t_1\in (0 ,T]$ and take the limit as $h\to 0$. Then, by the change of variables (\ref{2-7}) the first term of the left-hand side of (\ref{5-7}) is as follows:
\begin{align}
\label{5-8}
& \lim_{h\to 0}\int_0^{t_1} \int_0^1 \tilde{u}_t(t) s(t) v_h(t) dy dt = \int_0^{t_1}\int_0^1 |\tilde{u}_t(t)|^2 s(t) dy dt \nonumber \\
=& \int_0^{t_1}\int_0^{s(t)} \biggl(|u_t(t)|^2 +2u_t(t)u_z(t)\frac{z}{s(t)}s_t(t) + \left(u_z(t)\frac{z}{s(t)}s_t(t)\right)^2 \biggr) dzdt.
\end{align}
As arguing the local existence, the function $t\to \int_0^{s(t)}|u_z(t)|^2dz$ is absolutely continuous on $[0, T]$. Then, the second and third terms of  the left-hand side of (\ref{5-7}) can be dealt with as 
\begin{align}
\label{5-9}
& \lim_{h\to0} \frac{1}{2h}\int_0^{t_1}\biggl(\int_0^{s(t)}|u_z(t)|^2dz-\int_0^{s(t-h)}|u_z(t-h)|^2dz\biggr)d\tau \nonumber \\
= & \frac{1}{2}\biggl(\int_0^{s(t_1)}|u_z(t_1)|^2dz - \int_0^{s_0}|u_z(0)|^2dz\biggr), 
\end{align}
and 
\begin{align}
\label{5-10}
\lim_{h\to 0}\frac{1}{2}  \int_0^{t_1}\int_0^{s(t-h)} \frac{1}{s(t)} \frac{s(t)-s(t-h)}{h}|u_z(t-h)|^2 dzdt = \frac{1}{2} \int_0^{t_1}\int_0^{s(t)} \frac{s_t(t)}{s(t)}|u_z(t)|^2dzdt.
\end{align}
Moreover, since $\tilde{u}$ is continuous on $\overline{Q(T)}$ we have that 
\begin{align}
\label{5-11}
\lim_{h\to 0}\frac{a_0}{3h}\int_0^{t_1}\biggl(\tilde{u}^3(t, 1)-\tilde{u}^3(t-h, 1)\biggr)dt = & \lim_{h\to 0} \biggl(\frac{a_0}{3h}\int_{t_1-h}^{t_1} \tilde{u}^3(t, 1) dt\biggr) -\frac{a_0}{3}\tilde{u}^3_0(1) \nonumber \\
= & \frac{a_0}{3}\tilde{u}^3(t_1, 1)-\frac{a_0}{3}\tilde{u}^3_0(1).
\end{align}
Similarly to the derivation of (\ref{5-11}), 
\begin{align}
\label{5-12}
& \lim_{h\to 0}\frac{1}{h}\int_0^{t_1}\biggl( \Phi(b(t), \tilde{u}(t, 0))-\Phi(b(t), \tilde{u}(t-h, 0)) \biggr) dt \nonumber \\ 
= & \Phi(b(t_1), \tilde{u}(t_1, 0)) - \Phi(b(0), \tilde{u}_0(0)) \nonumber \\ 
& + \lim_{h\to 0}\biggl(-\frac{1}{h}\int_0^{t_1}\biggl[\Phi(b(t), \tilde{u}(t-h, 0))-\Phi(b(t-h), \tilde{u}(t-h, 0))\biggr]dt\biggr). 
\end{align}
For the last term of the right-hand side of (\ref{5-12}) we observe that 
\begin{align}
\label{5-13}
& \lim_{h\to 0}\biggl(-\frac{1}{h}\int_0^{t_1}\biggl[\Phi(b(t), \tilde{u}(t-h, 0))-\Phi(b(t-h), \tilde{u}(t-h, 0))\biggr]dt\biggr) \nonumber \\
\geq & \lim_{h\to 0}\biggl(-\frac{1}{h}\int_0^{t_1} \beta |b(t)-b(t-h)||\tilde{u}(t-h, 0)|dt \biggr) \nonumber \\
\geq & \lim_{h\to 0}\biggl(-\frac{\beta}{h}\int_0^{t_1}\biggl(\int_{t-h}^t |b_t(\tau)|d\tau\biggr)|\tilde{u}(t-h, 0)|dt \biggr) \nonumber \\
\geq & -\frac{\beta b^*}{\gamma} \int_0^{t_1}|b_t(t)| dt.
\end{align}
From (\ref{5-7}) and the estimates (\ref{5-8})-(\ref{5-13}), we obtain that 
\begin{align}
\label{5-14} 
& \int_0^{t_1}\int_0^{s(t)} \biggl(|u_t(t)|^2 +2u_t(t)u_z(t)\frac{z}{s(t)}s_t(t) + \left(u_z(t)\frac{z}{s(t)}s_t(t)\right)^2 \biggr) dzdt  \nonumber \\
& + \frac{1}{2}\int_0^{s(t_1)}|u_z(t_1)|^2dz -\frac{1}{2}\int_0^{s_0}|u_z(0)|^2dz + \frac{1}{2}\int_0^{t_1}\int_0^{s(t)} \frac{s_t(t)}{s(t)}|u_z(t)|^2dzdt \nonumber \\
& +\frac{a_0}{3}\tilde{u}^3(t_1, 1) - \frac{a_0}{3}\tilde{u}^3_0(1) + \Phi(b(t_1), \tilde{u}(t_1, 0))-\Phi(b(0), \tilde{u}_0(0)) - \frac{\beta b^*}{\gamma }\int_0^{t_1}|b_t(t)|dt  \nonumber \\
\leq & \int_0^{t_1} \int_0^{s(t)} \biggl (u_t(t)u_z(t)\frac{z}{s(t)}s_t(t) + \left(u_z(t)\frac{z}{s(t)}s_t(t)\right)^2 \biggr) dzdt \ \mbox{ for }t_1\in [0, T].
\end{align}
Then, we see that the third term of the left-hand side of (\ref{5-14}) is same to the second term of the right-hand side of (\ref{5-14}) . Then, by moving the second term of the left-hand side of (\ref{5-14}) to the right-hand side we have that 
\begin{align}
\label{5-15}
& \int_0^{t_1}\int_0^{s(t)} |u_t(t)|^2 dzdt  \nonumber \\
& + \frac{1}{2}\int_0^{s(t_1)}|u_z(t_1)|^2dz -\frac{1}{2}\int_0^{s_0}|u_z(0)|^2dz + \frac{1}{2}\int_0^{t_1}\int_0^{s(t)} \frac{s_t(t)}{s(t)}|u_z(t)|^2dzdt \nonumber \\
& + \frac{a_0}{3} \tilde{u}^3(t_1, 1) - \frac{a_0}{3} \tilde{u}^3_0(1) + \Phi(b(t_1), \tilde{u}(t_1, 0))-\Phi(b(0), \tilde{u}_0(0))  - \frac{\beta b^*}{\gamma } \int_0^{t_1}|b_t(t)| dt \nonumber \\
\leq & \int_0^{t_1} \int_0^{s(t)} -u_t(t)u_z(t)\frac{z}{s(t)}s_t(t) dzdt \ \mbox{ for }t_1\in [0, T].
\end{align}
Using (\ref{1-1}) and the fact that $s_t(t)\geq 0$ for $t\in [0, T]$ we obtain the following inequality:
\begin{align}
\label{5-16}
& -\int_0^{t_1} \int_0^{s(t)} u_t(t)u_z(t)\frac{z}{s(t)}s_t(t) dzdt \nonumber \\
= & -\int_0^{t_1} \int_0^{s(t)} u_{zz}(t) u_z(t) \frac{z}{s(t)}s_t(t) dzdt \nonumber \\
= & -\int_0^{t_1} \int_0^{s(t)} \frac{1}{2}\biggl(\frac{\partial}{\partial z}|u_z(t)|^2\biggr) \frac{z}{s(t)}s_t(t) dzdt \nonumber \\
= & -\int_0^{t_1} \frac{1}{2}|u_z(t, s(t))|^2s_t(t) dt + \frac{1}{2}\int_0^{t_1}\int_0^{s(t)}\frac{s_t(t)}{s(t)}|u_z(t)|^2dzdt. \nonumber \\
\leq & \frac{1}{2}\int_0^{t_1}\int_0^{s(t)}\frac{s_t(t)}{s(t)}|u_z(t)|^2dzdt. 
\end{align}
Hence, by (\ref{5-15}) and (\ref{5-16}) we have that 
\begin{align}
\label{5-17}
& \int_0^{t_1}\int_0^{s(t)} |u_t(t)|^2 dzdt \nonumber \\ 
& + \frac{1}{2}\int_0^{s(t_1)}|u_z(t_1)|^2dz - \frac{1}{2}\int_0^{s_0}|u_z(0)|^2dz + \frac{1}{2}\int_0^{t_1}\int_0^{s(t)} \frac{s_t(t)}{s(t)}|u_z(t)|^2dzdt \nonumber \\
& + \frac{a_0}{3} \tilde{u}^3(t_1, 1) - \frac{a_0}{3} \tilde{u}^3_0(1) + \Phi(b(t_1), \tilde{u}(t_1, 0))-\Phi(b(0), \tilde{u}_0(0)) - \frac{\beta b^*}{\gamma } \int_0^{t_1}|b_t(t)| dt \nonumber \\ 
\leq & \frac{1}{2}\int_0^{t_1}\int_0^{s(t)}\frac{s_t(t)}{s(t)}|u_z(t)|^2dzdt \ \mbox{ for }t_1\in [0, T].
\end{align}
The forth term in the left-hand side and the right-hand side are canceled out and the fifth and eighth terms in the left-hand side are positive. 
Therefore, we finally obtain that 
\begin{align}
\label{5-18} 
& \int_0^{t_1}\int_0^{s(t)} |u_t(t)|^2 + \frac{1}{2}\int_0^{s(t_1)}|u_z(t_1)|^2dz  \nonumber \\
\leq & \frac{1}{2}\int_0^{s_0}|u_z(0)|^2dz + \frac{a_0}{3} \tilde{u}^3_0(1) -\Phi(b(t_1), \tilde{u}(t_1, 0)) + \frac{\beta b^*}{\gamma } \int_0^{t_1}|b_t(t)| dt \mbox{ for }t_1\in [0, T]. 
\end{align}
In the right-hand side of (\ref{5-18}), by (A2) and $0\leq \tilde{u}(t)\leq b^*/\gamma $ on $[0, 1]$ for $t\in [0, T]$ and the definition of $\Phi$, we can estimate as follows:
\begin{align}
\label{5-21}
-\Phi(b(t_1), \tilde{u}(t_1, 0)) = -\beta b(t_1) \tilde{u}(t_1, 0) + \frac{\beta \gamma}{2} \tilde{u}^2(t_1, 0) \leq \frac{\beta \gamma}{2} \left( \frac{b^*}{\gamma} \right)^2. 
\end{align}
Finally, by (\ref{5-21}), $b\in W^{1,2}(0, T)$ as in (A2) and (A3) we see that there exists $\tilde{C}$ which depends on $b^*$, $a_0$, $\gamma$, $\beta$ such that (\ref{5-1}) holds. Thus, Lemma \ref{lem8} is proved. 
\end{proof}

%
%
%
%

At the end of this section, we prove Theorem \ref{t3}. Let $T>0$. 
By the local existence result there exists $T_1<T$ such that (P)$(u_0, s_0, b)$ has a unique solution $(s, u)$ on $[0, T_1]$ satisfying $0 \leq u \leq b^*/\gamma$ on $Q_s(T_1)$. Then, the pair $(s, \tilde{u})$ with the variable (\ref{2-0}) is a solution of $(\mbox{PC})(\tilde{u}_0, s_0, b)$ satisfying $0 \leq \tilde{u} \leq b^*/\gamma$ on $Q(T_1)$. Let put 
\begin{align*}
\tilde{T}:=\mbox{sup}\{T_1>0 |  (\mbox{PC})(\tilde{u}_0, s_0, b) \mbox{ has a solution }(s, \tilde{u}) \mbox{ on } [0, T_1]\}.
\end{align*}
From the local existence result, we deduce that $\tilde{T}>0$. Now, we assume $\tilde{T}<T$. First, by (\ref{2-4}) and the result that $\tilde{u}(t)\geq 0$ on $[0, 1]$ for $t\in [0, \tilde{T})$ we see that $s_t(t) \geq 0$ for $t\in [0, \tilde{T})$, and therefore $s(t) \geq s_0$ for $t\in [0, \tilde{T})$. Also, by putting $L(t)=a_0\frac{b^*}{\gamma}t +s_0$ for $t\in [0, T]$ we have that 
\begin{align}
\label{5-22}
s(t) &= s_0 + \int_0^{t} a_0 \sigma(\tilde{u}(\tau, 1)) d\tau \nonumber \\
&=s_0 + \int_0^t a_0 \tilde{u}(\tau, 1) d\tau \nonumber \\
&\leq  s_0 + a_0\frac{b^*}{\gamma} \tilde{T}=L(\tilde{T}) <L(T)\mbox{ for }t\in [0, \tilde{T}). 
\end{align}
Next, by using  the change of the variable (\ref{2-0}), it holds that 
\begin{align}
& \int_0^1|\tilde{u}_y(t)|^2dy =\int_0^{s(t)}\frac{1}{s(t)}|u_z(t) s(t)|^2dzdt. \nonumber 
\end{align}
Therefore, from (\ref{5-22}) and Lemma \ref{lem8},  we obtain that %
\begin{align}
\label{5-26}
|\tilde{u}_y(t)|^2_{L^2(0, 1)}\leq L(T) \tilde{C} \mbox{ for all }t<\tilde{T}, 
\end{align}
where $\tilde{C}$ is the same constant as in Lemma \ref{lem8}. 
By (\ref{5-26}) we see that for some $\tilde{u}_{\tilde{T}}\in H^1(0, 1)$, 
$\tilde{u}(t)\to \tilde{u}_{\tilde{T}}$ strongly in $L^2(0, 1)$ and weakly in $H^1(0, 1)$
as $t\to \tilde{T}$ and $0 \leq \tilde{u}_{\tilde{T}} \leq b^*/\gamma$ on $(0, 1)$. Also, by $|s_t(t)|\leq a_0b^*/\gamma$ for $t\in [0, \tilde{T})$, $\{s(t)\}_{t\in [0, \tilde{T})}$ is a Cauchy sequence in $\mathbb{R}$ so that for some $s_{\tilde{T}}\in \mathbb{R}$, $s(t) \to s_{\tilde{T}}$ in $\mathbb{R}$ as $t\to \tilde{T}$. Moreover, by (\ref{5-22}), $s_{\tilde{T}}$ satisfies that $0<s_0\leq s_{\tilde{T}}\leq L(\tilde{T})$. Now, we put $u_{\tilde{T}}(z)=\tilde{u}_{\tilde{T}}(\frac{z}{s_{\tilde{T}}})$ for $z\in [0, s_{\tilde{T}}]$. Then,  we see that $u_{\tilde{T}}\in H^1(0, s_{\tilde{T}})$ and $0\leq u_{\tilde{T}}\leq b^*/\gamma $ on $(0, s_{\tilde{T}})$ and we can consider $(s_{\tilde{T}}, u_{\tilde{T}})$ as a initial data.
Therefore, by repeating the argument of the local existence we can extend a solution beyond $\tilde{T}$. 
This is a contradiction for the definition of $\tilde{T}$ and we have a solution on the whole interval $[0, T]$. 
Thus Theorem \ref{t3} is proved.

%
%
%
%
%
%

\section{Large time behavior of the free boundary}
\label{large}

In this section, we discuss the large time behavior of a solution to (P)$(u_0, s_0, b)$ as $t\to \infty$. First, we assume (A2)' replaced by (A2): 
\newline
(A2)': $b\in W^{1, 2}_{loc}([0, \infty))$, $b_t\in L^1(0, \infty)$, lim$_{t\to \infty} b(t)=b_{\infty}$, $b-b_{\infty}\in L^1(0, \infty)$ and 
$b_*\leq b\leq b^*$ on $(0, \infty)$, where $b_*$ and $b^*$ are positive constants as in (A2). 
\newline
Clearly, we see that $b_*\leq b_{\infty}\leq b^*$. Next, we consider the following stationary problem (P)$_{\infty}$: find a pair $(u_{\infty}, s_{\infty}) \in L^2(0, s_{\infty}) \times \mathbb{R}$ satisfying 
$$
\begin{cases}
-u_{\infty zz}=0 \mbox{ on } (0, s_{\infty}), \\
-u_{\infty z}(0)=\beta(b_{\infty}-\gamma u_{\infty}(0)), \ \ -u_{\infty z}(s_{\infty})=0,\\ 
u_{\infty}(s_{\infty})=0.
\end{cases}
$$
By using the change of variables $\tilde{u}_{\infty}(y)=u_{\infty}(ys_{\infty})$ for $y\in (0, 1)$, (P)$_{\infty}$ can be written in the following problem $(\tilde{\mbox{P}})_{\infty}$:
$$
\begin{cases}
-\displaystyle{\frac{1}{s^2_{\infty}}}\tilde{u}_{\infty yy}=0 \mbox{ on } (0, 1), \\[3mm]
-\displaystyle{\frac{1}{s_{\infty}}}\tilde{u}_{\infty y}(0)=\beta(b_{\infty}-\gamma \tilde{u}_{\infty}(0)), \ \ 
-\displaystyle{\frac{1}{s_{\infty}}}\tilde{u}_{\infty y}(1)=0,\\[2mm] 
\tilde{u}_{\infty}(1)=0.
\end{cases}
$$
The next lemma is concerned with non-existence of a solution $(s_{\infty}, \tilde{u}_{\infty})$ of the problem $(\tilde{\mbox{P}})_{\infty}$. 

\begin{lem}
\label{b-1} 
A solution $(s_{\infty}, \tilde{u}_{\infty})$ of $(\tilde{\mbox{P}})_{\infty}$ satisfying $0<s_{\infty}<+\infty$ and $\tilde{u}_{\infty}\in H^2(0, 1)$ does not exist.
\end{lem}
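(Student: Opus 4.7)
The plan is to solve the stationary ODE explicitly and then show the resulting function is incompatible with the boundary condition at $y=0$. Since $s_{\infty}>0$ is finite, the equation $-\tilde{u}_{\infty yy}/s_{\infty}^2=0$ on $(0,1)$ reduces to $\tilde{u}_{\infty yy}=0$, whose $H^2(0,1)$ solutions are exactly the affine functions $\tilde{u}_{\infty}(y)=Ay+B$ with $A,B\in\mathbb{R}$.

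Next, I would impose the three remaining conditions to pin down $A$ and $B$. The Neumann-type condition $-\tilde{u}_{\infty y}(1)/s_{\infty}=0$ forces $A=0$, so $\tilde{u}_{\infty}$ is constant. The free boundary condition $\tilde{u}_{\infty}(1)=0$ then forces $B=0$. Hence $\tilde{u}_{\infty}\equiv 0$ on $[0,1]$, which in particular gives $\tilde{u}_{\infty}(0)=0$ and $\tilde{u}_{\infty y}(0)=0$.

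Finally, I would substitute these values into the Robin condition at the fixed boundary $y=0$: the left-hand side $-\tilde{u}_{\infty y}(0)/s_{\infty}$ equals $0$, while the right-hand side equals $\beta(b_{\infty}-\gamma\cdot 0)=\beta b_{\infty}$. Since assumption (A2)$'$ yields $b_{\infty}\geq b_{*}>0$ and (A1) gives $\beta>0$, we conclude $\beta b_{\infty}>0$, contradicting the equality. Therefore no pair $(s_{\infty},\tilde{u}_{\infty})$ with $0<s_{\infty}<+\infty$ and $\tilde{u}_{\infty}\in H^2(0,1)$ can solve $(\tilde{\mbox{P}})_{\infty}$.

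There is no real obstacle here: the argument is essentially three lines once one writes out the affine family. The only thing worth being careful about is that all four conditions of $(\tilde{\mbox{P}})_{\infty}$ are enforced simultaneously, so that the overdetermined nature of the system (two Neumann-type conditions plus a Dirichlet condition plus the ODE, for a two-parameter family of candidates) is what produces the contradiction via the positivity of $b_{\infty}$.
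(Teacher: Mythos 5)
Your proof is correct and takes essentially the same approach as the paper: both determine that the ODE together with the Neumann condition at $y=1$ and the Dirichlet condition $\tilde{u}_{\infty}(1)=0$ force $\tilde{u}_{\infty}\equiv 0$, and then derive the contradiction from the Robin condition at $y=0$ using $\beta b_{\infty}>0$. The only cosmetic difference is that the paper first deduces $\tilde{u}_{\infty y}(0)=\tilde{u}_{\infty y}(1)=0$ and hence $\tilde{u}_{\infty}(0)=b_{\infty}/\gamma$ before observing $\tilde{u}_{\infty}\equiv 0$, whereas you solve the affine family outright; the logic is identical.
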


\begin{proof}
Let $(s_{\infty}, \tilde{u}_{\infty})$ be a solution of $(\tilde{\mbox{P}})_{\infty}$ such that $0<s_{\infty}<+\infty$ and $\tilde{u}_{\infty}\in H^2(0, 1)$. Then, it holds that 
\begin{align*}
-\frac{1}{s_{\infty}}\tilde{u}_{\infty y}(1)+ \frac{1}{s_{\infty}}\tilde{u}_{\infty y}(0)=0.
\end{align*}
Then, we see that $\tilde{u}_{\infty}(0)=b_{\infty}/\gamma $. Hence, $\tilde{u}_{\infty}\in H^2(0, 1)$ satisfies $-\tilde{u}_{\infty yy}=0$ on $(0, 1)$ with $\tilde{u}_{\infty y}(1)=\tilde{u}_{\infty y}(0)=0$ and $\tilde{u}_{\infty}(1)=0$ so that $\tilde{u}_{\infty} \equiv 0$ on $[0, 1]$. This is a contradiction to $\tilde{u}_{\infty}(0) \neq 0$. Thus, we conclude that Lemma \ref{b-1} holds. 
\end{proof}

Now, we state the result on the large time behavior of a solution as $t \to \infty$. 

\begin{thm}
\label{b-2}
Assume (A1), (A2)' and (A3) and let (P)$(u_0, s_0, b)$ be a solution $(s, u)$ on $[0, \infty)$. 
Then, $s \to \infty$ as $t \to \infty$. 
\end{thm}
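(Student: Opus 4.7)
The plan is to argue by contradiction along the lines sketched in the introduction. Suppose $s$ stays bounded on $[0,\infty)$. Since $u \ge 0$ on $Q_s(\infty)$ by Lemma~\ref{lem7}, the kinetic condition $s_t = a_0\sigma(u(t,s(t))) = a_0 u(t,s(t))$ gives $s_t \ge 0$, so $s$ is non-decreasing and $s(t) \uparrow s_\infty$ for some $s_\infty \in [s_0,\infty)$. Consequently $s_t \in L^1(0,\infty)$, and because $s_t(t) = a_0\tilde u(t,1)$ with $0 \le \tilde u \le b^*/\gamma$ (Theorem~\ref{t3}), we also obtain $\tilde u(\cdot,1) \in L^1(0,\infty)$ and $s_t \in L^2(0,\infty)$.

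Next I would translate the uniform bounds of Lemma~\ref{lem8} into estimates on the cylindrical variable $\tilde u(t,y)=u(t,ys(t))$. Using $s_0 \le s \le s_\infty$ together with the change-of-variables identities $|\tilde u_y(t)|_{L^2(0,1)}^2 = s(t)\,|u_z(t)|_{L^2(0,s(t))}^2$ and $\tilde u_t = u_t(t,ys(t)) + y s_t \tilde u_y/s$, Lemma~\ref{lem8} together with $s_t \in L^2(0,\infty)$ yields $\tilde u \in L^\infty(0,\infty;H^1(0,1))$ and $\tilde u_t \in L^2(0,\infty;L^2(0,1))$. Reading $\tilde u_{yy}$ off from (\ref{2-1}) then gives $\tilde u_{yy} \in L^2(0,\infty;L^2(0,1))$ as well.

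Consider now the time-shifted sequences $\tilde u^n(t,y) := \tilde u(t+n,y)$, $s_n(t) := s(t+n)$, $b_n(t) := b(t+n)$ on $(0,1)\times(0,1)$. On the one hand, Aubin--Lions compactness produces a subsequence and a limit $\tilde u^*$ with $\tilde u^n \to \tilde u^*$ strongly in $L^2(0,1;H^1(0,1))$ and weakly in $L^2(0,1;H^2(0,1))$, while $\int_n^{n+1}|\tilde u_t|^2_{L^2}\,dt \to 0$ forces $\tilde u^*$ to be time-independent, hence $\tilde u^* \in H^2(0,1)$. On the other hand, (A2)' delivers $b_n \to b_\infty$, and $s_n \to s_\infty$ uniformly with $s_{n,t} \to 0$ in $L^2(0,1)$. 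Passing to the limit in (\ref{2-1})--(\ref{2-3}) then shows that $\tilde u^*$ satisfies the equation and the boundary condition at $y=0$ of $(\tilde{\mbox{P}})_\infty$, together with $-\tilde u^*_y(1)/s_\infty = \tilde u^*(1)\cdot a_0\sigma(\tilde u^*(1))$ at $y=1$. The pending condition $\tilde u^*(1) = 0$ is obtained from $\int_0^1 \tilde u^n(t,1)\,dt \to 0$ (a consequence of $\tilde u(\cdot,1) \in L^1(0,\infty)$), combined with the a.e.~convergence of the traces $\tilde u^n(t,1)\to \tilde u^*(1)$ granted by the strong $L^2(0,1;H^1(0,1))$ limit and the Sobolev embedding $H^1(0,1)\hookrightarrow C([0,1])$. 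Thus $(s_\infty, \tilde u^*)$ would solve $(\tilde{\mbox{P}})_\infty$ with $0<s_\infty<\infty$ and $\tilde u^* \in H^2(0,1)$, contradicting Lemma~\ref{b-1}.

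The main obstacle is the third step: assembling enough compactness on the shifted family to legally pass to the limit in the nonlinear boundary term at $y=1$ and, simultaneously, promoting the $L^1$-in-time decay of $\tilde u(\cdot,1)$ into the pointwise identity $\tilde u^*(1)=0$ needed to invoke Lemma~\ref{b-1}. The integrability $s_t \in L^1(0,\infty)$ -- itself a consequence of the monotonicity of $s$ forced by the positive part in (\ref{1-4}) -- is the crucial ingredient; without it neither the boundedness of $s$ nor the vanishing of the right-end trace could be guaranteed, and the contradiction argument would collapse.
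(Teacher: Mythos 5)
Your proposal is correct and follows essentially the same route as the paper: obtain uniform-in-time estimates under the boundedness hypothesis, shift time, extract a convergent subsequence via compactness, identify the limit as a solution of $(\tilde{\mbox{P}})_\infty$ with $s_\infty$ finite and $\tilde u_\infty \in H^2(0,1)$, and contradict Lemma~\ref{b-1}. Your derivation of $s_t\in L^1\cap L^2(0,\infty)$ directly from the monotonicity and boundedness of $s$ (rather than via the paper's energy identity in Lemma~\ref{b-3}) is a modest simplification, and your use of $\tilde u(\cdot,1)\in L^1(0,\infty)$ to force $\tilde u^*(1)=0$ is equivalent to the paper's passage to the limit in $s_{n_jt}(t)=a_0\tilde u_{n_j}(t,1)$.
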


We prove this result in the rest of the section. 

%
%
%
%
\subsection{Global estimates}

To prove Theorem \ref{b-2}, we provide some uniform estimates for the solution with respect to time $t$. 
We assume (A1), (A2)' and (A3). Then, by Theorem \ref{t3}, (P)$(u_0, s_0, b)$ has a solution $(s, u) $ on $[0, T]$ for $T>0$ satisfying $0 \leq u\leq b^*/\gamma $ on $[0, s(t)]$ for $t\in [0, T]$. 

\begin{lem}
\label{b-3}
Let $(s, u)$ be a solution of  (P)$(u_0, s_0, b)$ on $[0, \infty)$.
If there exists a constant $C>0$ such that $s(t) \leq C$ for $t>0$, then it holds 
\begin{align}
(i) & \int_0^t |s_t(\tau)|^2 d\tau + \int_0^t |u_z(\tau)|^2_{L^2(0, s(\tau))} d\tau +\int_0^t \left |u(\tau, 0)-\frac{b_{\infty}}{\gamma} \right |^2 d\tau  \leq C_1\mbox{ for }t>0, \label{b-31}\\
(ii) & \int_{0}^t|u_t(\tau)|^2_{L^2(0, s(\tau))}d\tau + |u_z(t)|^2_{L^2(0, s(t))} \leq C_2 \mbox{ for }t>0,  \label{b-32}
\end{align}
where $C_1$ and $C_2$ are positive constants which is independent of time $t$. 
\end{lem}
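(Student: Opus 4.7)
The plan is to treat the two parts separately. Estimate (ii) follows from the argument of Lemma \ref{lem8} once the stronger summability of $b_t$ from (A2)$'$ is used, while (i) requires a fresh energy identity centred on the shifted unknown $w := u - b_\infty/\gamma$ and exploits the standing hypothesis $s(t)\leq C$.

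For (ii), I revisit inequality (\ref{5-18}). The only quantity on its right-hand side that depends on the upper time limit is $\tfrac{\beta b^*}{\gamma}\int_0^{t}|b_t(\tau)|\,d\tau$, which under (A2)$'$ is bounded by $\tfrac{\beta b^*}{\gamma}|b_t|_{L^1(0,\infty)}$ uniformly in $t$. Together with the pointwise bound $|\Phi(b(t),\tilde u(t,0))|\leq \tfrac{\beta\gamma}{2}(b^*/\gamma)^2$ already derived in (\ref{5-21}) and the fixed initial data, this yields a time-independent constant $C_2$ majorising the left-hand side of (\ref{5-18}), which is exactly (ii).

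For (i), I set $w = u - b_\infty/\gamma$ and rewrite the boundary conditions (\ref{1-2}) and (\ref{1-3}) as
\begin{align*}
-w_z(t,0) &= -\gamma\beta\, w(t,0) + \beta(b(t)-b_\infty), \\
-w_z(t,s(t)) &= s_t(t)\bigl(w(t,s(t)) + b_\infty/\gamma\bigr).
\end{align*}
Multiplying $w_t - w_{zz} = 0$ by $w$, integrating over $[0,s(t)]$, applying Leibniz's rule on the moving domain, and substituting these boundary conditions together with the identity $w(t,s(t)) = s_t(t)/a_0 - b_\infty/\gamma$ (which follows from (\ref{1-4}) and $u\geq 0$), I obtain after rearrangement
\begin{align*}
& \tfrac{1}{2}\tfrac{d}{dt}\int_0^{s(t)} w^2\,dz + \tfrac{s_t(t)}{2}w^2(t,s(t)) + \int_0^{s(t)} w_z^2\,dz + \tfrac{b_\infty}{a_0\gamma}|s_t(t)|^2 + \gamma\beta\, w^2(t,0) \\
&\qquad = \beta(b(t)-b_\infty)w(t,0) + (b_\infty/\gamma)^2\,s_t(t).
\end{align*}
Integrating in time from $0$ to $t$, the first term on the right is controlled by $\beta|b-b_\infty|_{L^1(0,\infty)}\cdot b^*/\gamma$, using $0\leq u\leq b^*/\gamma$ to conclude $|w|\leq b^*/\gamma$, while the second integrates to $(b_\infty/\gamma)^2(s(t)-s_0)\leq (b_\infty/\gamma)^2(C-s_0)$ by the hypothesis $s\leq C$. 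Dropping the two nonnegative terms $\tfrac{1}{2}|w(t)|_{L^2(0,s(t))}^2$ and $\int_0^t\tfrac{s_t(\tau)}{2}w^2(\tau,s(\tau))\,d\tau$ on the left, and using $b_\infty\geq b_* > 0$ to secure a strictly positive coefficient on $|s_t|^2$, yields simultaneous bounds on $\int_0^t|s_t(\tau)|^2\,d\tau$, $\int_0^t|u_z(\tau)|_{L^2(0,s(\tau))}^2\,d\tau$ and $\int_0^t w^2(\tau,0)\,d\tau = \int_0^t|u(\tau,0)-b_\infty/\gamma|^2\,d\tau$, which is (i).

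The main design choice, and the only delicate point, is the shift by $b_\infty/\gamma$: on the fixed boundary it converts the Robin condition into a coercive $\gamma\beta\, w^2(t,0)$ contribution modulo the $L^1$-integrable perturbation $\beta(b-b_\infty)w(t,0)$, and on the free boundary it lets me trade the cross term $(b_\infty/\gamma)\,s_t\, w(t,s(t))$ for a strictly positive multiple of $|s_t|^2$ plus a linear $s_t$ term whose time integral is bounded by $s\leq C$. This is the sole place where the boundedness of $s(t)$ is used.
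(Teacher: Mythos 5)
Your proposal is correct and follows essentially the same route as the paper: part (ii) is obtained by rerunning the derivation of Lemma \ref{lem8} and noting that under (A2)$'$ the only $t$-dependent quantity on the right of (\ref{5-18}) is controlled by $|b_t|_{L^1(0,\infty)}$, and part (i) is the energy identity for $u-b_\infty/\gamma$ combined with the substitution $s_t=a_0 u(t,s(t))$, which is precisely the paper's computation (6.1)--(6.8). Introducing the notation $w=u-b_\infty/\gamma$ and expanding the boundary conditions upfront is merely a cleaner bookkeeping of the same argument; your identity matches the paper's (6.6), your bound $|w|\leq b^*/\gamma$ is a (slightly tighter) variant of the paper's $(b^*+b_\infty)/\gamma$, and the role of $s\leq C$ and $b_*>0$ is identical.
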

\begin{proof}
First, we prove that (\ref{b-31}) holds. By (\ref{1-1}) we have that 
\begin{align}
\label{6-1}
\frac{1}{2} \frac{d}{dt} \int_0^{s(t)}\left|u(t)-\frac{b_{\infty}}{\gamma} \right|^2 dz- \frac{s_t(t)}{2} \left |u(t, s(t))-\frac{b_{\infty}}{\gamma} \right|^2 -\int_0^{s(t)} u_{zz}(t) \left( u(t)-\frac{b_{\infty}}{\gamma} \right) dz=0.
\end{align}
For the third term of the left-hand side of (\ref{6-1}), it holds that 
\begin{align}
\label{6-2}
& -\int_0^{s(t)}u_{zz}(t) \left( u(t)-\frac{b_{\infty}}{\gamma} \right)  \nonumber \\
= & -u_z(t, s(t)) \left(u(t, s(t))-\frac{b_{\infty}}{\gamma} \right) + u_z(t, 0) \left( u(t, 0)-\frac{b_{\infty}}{\gamma}  \right) + \int_0^{s(t)}|u_z(t)|^2 dz \nonumber \\
= & s_t(t) \left |u(t, s(t))-\frac{b_{\infty}}{\gamma} \right|^2 + \frac{b_{\infty}}{\gamma} s_t(t) \left (u(t, s(t))-\frac{b_{\infty}}{\gamma} \right) \nonumber \\
& -\beta(b(t)-\gamma u(t, 0))\left (u(0)-\frac{b_{\infty}}{\gamma}\right) + \int_0^{s(t)}|u_z(t)|^2 dz.
\end{align}
By (\ref{6-1}) with (\ref{6-2}) it follows that 
\begin{align}
\label{6-3}
& \frac{1}{2} \frac{d}{dt} \int_0^{s(t)}\left|u(t)-\frac{b_{\infty}}{\gamma} \right|^2 dz + \frac{s_t(t)}{2} \left |u(t, s(t))-\frac{b_{\infty}}{\gamma} \right|^2 + \int_0^{s(t)}|u_z(t)|^2 dz \nonumber \\
& + \frac{b_{\infty}}{\gamma} s_t(t) \left (u(t, s(t))-\frac{b_{\infty}}{\gamma} \right) -\beta(b(t)-\gamma u(t, 0))\left (u(0)-\frac{b_{\infty}}{\gamma}\right)=0.
\end{align} 
Since $s_t(t)=a_0\sigma(u(t, s(t))=a_0u(t, s(t))$, we have that
\begin{align}
\label{6-4}
\frac{b_{\infty}}{\gamma  } s_t(t) \left (u(t, s(t))-\frac{b_{\infty}}{\gamma} \right) = \frac{b_{\infty}}{\gamma} \frac{|s_t(t)|^2}{a_0}-\left(\frac{b_{\infty}}{\gamma}\right)^2 s_t(t). 
\end{align}
Also, it holds that 
\begin{align}
\label{6-5}
& -\beta(b(t)-\gamma u(t, 0))\left (u(0)-\frac{b_{\infty}}{\gamma}\right) \nonumber \\
= & \beta(\gamma u(t, 0)- b_{\infty} + b_{\infty}-b(t)) \left( u(t, 0)-\frac{b_{\infty}}{\gamma}\right) \nonumber \\
= & \beta \gamma \left | u(t, 0)-\frac{b_{\infty}}{\gamma} \right|^2 + \beta(b_{\infty}-b(t)) \left( u(t, 0)-\frac{b_{\infty}}{\gamma}\right) . 
\end{align}
Combining with (\ref{6-3})-(\ref{6-5}), we obtain that 
\begin{align}
\label{6-6}
& \frac{1}{2} \frac{d}{dt} \int_0^{s(t)}\left|u(t)-\frac{b_{\infty}}{\gamma} \right|^2 dz + \frac{s_t(t)}{2} \left |u(t, s(t))-\frac{b_{\infty}}{\gamma} \right|^2 + \int_0^{s(t)}|u_z(t)|^2 dz \nonumber \\
& +\frac{b_{\infty}}{\gamma} \frac{|s_t(t)|^2}{a_0} + \beta \gamma \left | u(t, 0)-\frac{b_{\infty}}{\gamma} \right|^2 \nonumber \\
= & \left(\frac{b_{\infty}}{\gamma}\right)^2 s_t(t) + \beta(b(t)-b_{\infty}) \left( u(t, 0)-\frac{b_{\infty}}{\gamma}\right). 
\end{align}
Here, by the fact that $u(t) \geq 0$ on $[0, s(t)]$ for $t\in [0, T]$ we note that $s_t(t) \geq 0$ for $t\in [0, T]$ and the second term of the left-hand side of (\ref{6-6}) is non-negative. Hence, we derive that 
\begin{align}
\label{6-7}
& \frac{1}{2} \frac{d}{dt} \int_0^{s(t)}\left|u(t)-\frac{b_{\infty}}{\gamma} \right|^2 dz + \int_0^{s(t)}|u_z(t)|^2 dz +\frac{b_{\infty}}{\gamma} \frac{|s_t(t)|^2}{a_0} + \beta \gamma \left | u(t, 0)-\frac{b_{\infty}}{\gamma} \right|^2 \nonumber \\
\leq  & \left(\frac{b_{\infty}}{\gamma}\right)^2 s_t(t) + \beta(b(t)-b_{\infty}) \left( u(t, 0)-\frac{b_{\infty}}{\gamma}\right). 
\end{align}
By using $u(t)\leq b^*/\gamma$ on $[0, s(t)]$ for $t\in [0, T]$ and integrating over $[0, t_1]$ for $t_1\in [0, T]$ we obtain that 
\begin{align}
\label{6-8}
& \frac{1}{2} \int_0^{s(t_1)}\left|u(t_1)-\frac{b_{\infty}}{\gamma } \right|^2 dz + \int_0^{t_1} \int_0^{s(t)}|u_z(t)|^2 dz dt \nonumber \\
& +\frac{b_{\infty}}{a_0\gamma } \int_0^{t_1} |s_t(t)|^2 dt + \beta \gamma \int_0^{t_1} \left | u(t, 0)-\frac{b_{\infty}}{\gamma} \right|^2dt  \nonumber \\
\leq  & \frac{1}{2} \int_0^{s_0}\left|u_0-\frac{b_{\infty}}{\gamma} \right|^2 dz + \left(\frac{b_{\infty}}{\gamma}\right)^2 (s(t_1)-s(0)) + \frac{\beta (b^*+b_{\infty})}{\gamma} \int_0^{t_1}|b_{\infty}-b(t)| dt.
\end{align}
Hence, from $b-b_{\infty}\in L^1(0, \infty)$ in (A2)' and the assumption that $s(t) \leq C$ for $t\in [0, T]$, we conclude that (\ref{b-31}) holds. 

Also, for the estimate (\ref{b-32}), by repeating the proof of Lemma \ref{lem8} we infer that it holds that 
\begin{align}
\label{6-8-1} 
& \int_0^{t_1}\int_0^{s(t)} |u_t(t)|^2 + \frac{1}{2}\int_0^{s(t_1)}|u_z(t_1)|^2dz  \nonumber \\
\leq & \frac{1}{2}\int_0^{s_0}|u_z(0)|^2dz +  |\tilde{u}^3_0(1)| + |\Phi(b(t_1), \tilde{u}(t_1, 0))| + \frac{\beta b^*}{\gamma} \int_0^{t_1}|b_t(t)| dt \mbox{ for }t_1\in [0, T]. 
\end{align}
Therefore, by (\ref{5-21}) and $b_t\in L^1(0, \infty)$ we can find a positive constant $C_2$ which is independent of $t$ such that (\ref{b-32}) holds. This completes the proof of this lemma. 
\end{proof}

%
%
%
%
\subsection{Proof of Theorem \ref{b-2}}
\label{bb}
At the end of the paper, by using the uniform estimate obtained in previous subsection, we complete the proof of Theorem \ref{b-2} concerning the large-time behavior of solutions to (P)$(u_0, s_0, b)$. 

Let us assume (A1), (A2)' and (A3). Then, by Theorem \ref{t3}, we have a solution $(s, u)$ of (P)$(u_0, s_0, b)$ on $[0, T]$ for any $T>0$ such that $0 \leq u\leq b^*/\gamma $ on $[0, s(t)]$ for $t>0$. 

Now, we show Theorem \ref{b-2} by contradiction. Let us assume that there exists a constant $C>0$ such that $s(t)\leq C$ for $t\in [0, T]$. Then, by Lemma \ref{b-3}, we have that 
\begin{align}
& \int_0^t |s_t(\tau)|^2 d\tau + \int_0^t |u_z(\tau)|^2_{L^2(0, s(\tau))} d\tau 
+ \int_0^t \left |u(\tau, 0)-\frac{b_{\infty}}{\gamma } \right |^2 d\tau \leq C_1\mbox{ for }t>0, \label{6-9}\\
& \int_{0}^t|u_t(\tau)|^2_{L^2(0, s(\tau))}d\tau + |u_z(t)|^2_{L^2(0, s(t))} \leq C_2 \mbox{ for }t>0.  \label{6-10}
\end{align}
Here, by (\ref{1-4}), we see that $s_t(t)\geq 0$, and hence $s(t) \geq s_0$ for $t>0$. Also, $u\leq b^*/\gamma $ on $[0, s(t)]$ for $t>0$ so that it holds that $|s_t(t)|\leq a_0b^*/\gamma $ for $t>0$. 
From these results and  the change of variables (\ref{2-0}) we obtain that 
\begin{align}
\label{6-11} 
& \int_0^{t}|\tilde{u}_t(\tau)|^2_{L^2(0, 1)} d\tau \nonumber \\
=&\int_0^t\int_0^{s(\tau)} \frac{1}{s(\tau)} |u_t(\tau, z)+u_z(\tau, z) \frac{z}{s(\tau)}s_t(\tau)|^2dzd\tau \nonumber \\
\leq &\int_0^t\int_0^{s(\tau)} \frac{1}{s(\tau)} |u_t(t, z)|^2dzd\tau + \int_0^t\int_0^{s(\tau)}\frac{2}{s(\tau)}|u_t(\tau, z)||u_z(\tau, z)||s_t(\tau)| dzd\tau \nonumber \\
& + \int_0^t\int_0^{s(\tau)}\frac{1}{s(\tau)}|u_z(\tau, z)|^2|s_t(\tau)|^2 dzd\tau \nonumber \\
\leq & \frac{1}{s_0} \biggl(C_2+\frac{2a_0b^*}{\gamma }C^{1/2}_1C^{1/2}_2 + \left(\frac{a_0b^*}{\gamma }\right)^2 C_1 \biggr) 
\mbox{ for }t>0, 
\end{align}
and 
\begin{align}
\label{6-12}
|\tilde{u}_y(t)|^2_{L^2(0, 1)} = \int_0^{s(t)}\frac{1}{s(t)}|u_z(t) s(t)|^2dzdt \leq C C_2 \mbox{ for }t>0, 
\end{align}
where $C_1$ and $C_2$ are positive constants as in (\ref{6-9}) and (\ref{6-10}). 

Here, for $\{t_n\}$ such that $t_n\to \infty$ as $n\to \infty$, we put $\tilde{u}_n(t, y):=\tilde{u}(t+t_n, y)$ for $(t, y)\in [0, 1]\times [0, 1]$, $s_n(t):=s(t+t_n)$, $b_n(t):=b(t+t_n)$ for $t\in [0, 1]$. By (A2)', it is clear that $b_n \to b_{\infty}$ in $L^1(0, 1)$ as $n\to \infty$. Also, by (\ref{6-9}), (\ref{6-11}) and (\ref{6-12}) we see that $\{s_{nt}\}$ is bounded in $L^2(0, 1)$ and $\{ \tilde{u}_n\} $ is bounded in $W^{1, 2}(0, 1; L^2(0, 1))\cap L^{\infty}(0, 1; H^1(0, 1))$. 
Therefore, we can take a subsequence $\{n_j\}\subset \{n\}$ such that the following convergences holds for some $\tilde{u}_{\infty}\in H^1(0, 1)$ and $s_{\infty} \in \mathbb{R}$ satisfying $s_0\leq s_{\infty}<+\infty$:
$$
\begin{cases}
\tilde{u}_{nj}(0) = \tilde{u}(t_{nj}) \to \tilde{u}_{\infty} \mbox{ in }C([0, 1]), \mbox{ weakly in }H^1(0, 1), \\
s_{nj}(0)=s(t_{nj}) \to s_{\infty} \mbox{ in } \mathbb{R}, \\
\tilde{u}_{njt} \to 0 \mbox{ in } L^2(0, 1; L^2(0, 1)) \mbox{ and } s_{njt} \to 0 \mbox{ in } L^2(0, 1), \\
\tilde{u}_{nj} \to \tilde{u}_{\infty} \mbox{ in }C([0, 1]; L^2(0, 1)), \\
\hspace{1.8cm} \mbox{ weakly in }W^{1, 2}(0, 1;L^2(0, 1)),\\
\hspace{1.8cm} \mbox{ weakly -* in }L^{\infty}(0, 1; H^1(0, 1)), \\
s_{nj} \to s_{\infty} \mbox{ in }C([0, 1]), \mbox{ weakly in }W^{1, 2}(0, 1)
\end{cases}
$$
as $j\to \infty$. Also, by Sobolev's embedding theorem in one dimension (\ref{1d}), it holds that for $y\in [0, 1]$, 
\begin{align}
\label{6-13}
& \int_0^1|\tilde{u}_{nj}(t, y)-\tilde{u}_{\infty}(y)|^2 dt \leq C_e \int_0^1 |\tilde{u}_{nj}(t)-\tilde{u}_{\infty}|_{H^1(0, 1)}|\tilde{u}_{nj}(t)-\tilde{u}_{\infty}|_{L^2(0, 1)} dt. 
\end{align}
Hence, by the strong convergence of $\tilde{u}_{nj}$ and (\ref{6-13}) we see that 
\begin{align}
\label{6-14}
\tilde{u}_{nj}(y) \to \tilde{u}_{\infty}(y) \mbox{ in } L^2(0, 1) \mbox{ at }y=0, 1 \mbox{ as }j\to \infty. 
\end{align}
Now, for each $j$, $(s_{nj}, \tilde{u}_{nj})$ satisfies 
$$
\begin{cases}
\tilde{u}_{njt}(t, y)-\frac{1}{s^2_{nj}(t)}\tilde{u}_{njyy}(t, y)=\frac{ys_{njt}(t)}{s_{nj}(t)}\tilde{u}_{njy}(t, y) \mbox{ for }(t, y)\in Q(1), \\
-\frac{1}{s_{nj}(t)}\tilde{u}_{njy}(t, 0)=\beta(b_{nj}(t)-\gamma \tilde{u}_{nj}(t, 0)) \mbox{ for }t\in(0, 1), \\
 -\frac{1}{s_{nj}(t)}\tilde{u}_{njy}(t, 1)=\tilde{u}_{nj}(t, 1)s_{njt}(t) \mbox{ for }t\in (0, 1),  \\
 s_{njt}(t)=a_0 \tilde{u}_{nj}(t, 1)\mbox{ for }t\in (0, 1). 
\end{cases}
$$  
By letting $j\to \infty$ in the above system and using the strong convergences of $\tilde{u}_{nj}$ and $s_{nj}$, we see that $\tilde{u}_{\infty}\in H^2(0, 1)$ and 
\begin{align}
\label{6-15}
-\displaystyle{\frac{1}{s^2_{\infty}}}\tilde{u}_{\infty yy}=0 \mbox{ on } (0, 1).
\end{align}
Hence, by using the above convergences of $\tilde{u}_{nj}$ and $s_{nj}$, (\ref{6-14}) and (\ref{6-15}) we infer that $(s_{\infty}, \tilde{u}_{\infty})$ satisfies 
\begin{align*}
-\displaystyle{\frac{1}{s_{\infty}}}\tilde{u}_{\infty y}(0)=\beta(b_{\infty}-\gamma \tilde{u}_{\infty}(0)), \quad -\displaystyle{\frac{1}{s_{\infty}}}\tilde{u}_{\infty y}(1)=0, \quad \tilde{u}_{\infty}(1)=0. 
\end{align*} 
Therefore, we see that $(s_{\infty}, \tilde{u}_{\infty})$ is a solution of $(\tilde{\mbox{P}})_{\infty}$ such that $\tilde{u}_{\infty}\in H^2(0, 1)$ and $s_0\leq s_{\infty}<+\infty$. This contradicts that $(\tilde{\mbox{P}})_{\infty}$ does not have a solution (see Lemma \ref{b-1}). Thus, we conclude that $s$ goes to $\infty$ as $t\to \infty$ and Theorem \ref{b-2} holds.

\section{Numerical illustration}
In this section, we use our free boundary model to approximate numerically the  diffusion of a population of solvent molecules (cyclohexane) into a piece of material made of ethylene propylene diene monomer rubber (EPDM). The actual migration experiment and the set of basic parameters are reported in \cite{NMKAMWG}. 

In this framework, we take the effective diffusivity with an order of magnitude higher  and explore briefly of the depth of the penetration front depending on variations in the kinetic parameter $a_0$ arising in (\ref{1-4}). In fact, we look only at a particular instance of the large-time behavior of our problem and point out that, depending on the choice of model parameters, the free boundary position $s(t)$ behaves like a power law of type $t^\beta$, where $\beta$ is typically different than $\frac{1}{2}$ or $1$ as expected for the classical diffusion and for the Case II diffusion, respectively; see \cite{Marat} for a detailed discussion  based on first principles on the large time behavior of sharp diffusion fronts in the transition from glassy to rubbery polymers.

\begin{figure}[h!]
  \centering
  \begin{subfigure}[b]{0.4\linewidth}
    \includegraphics[width=\linewidth]{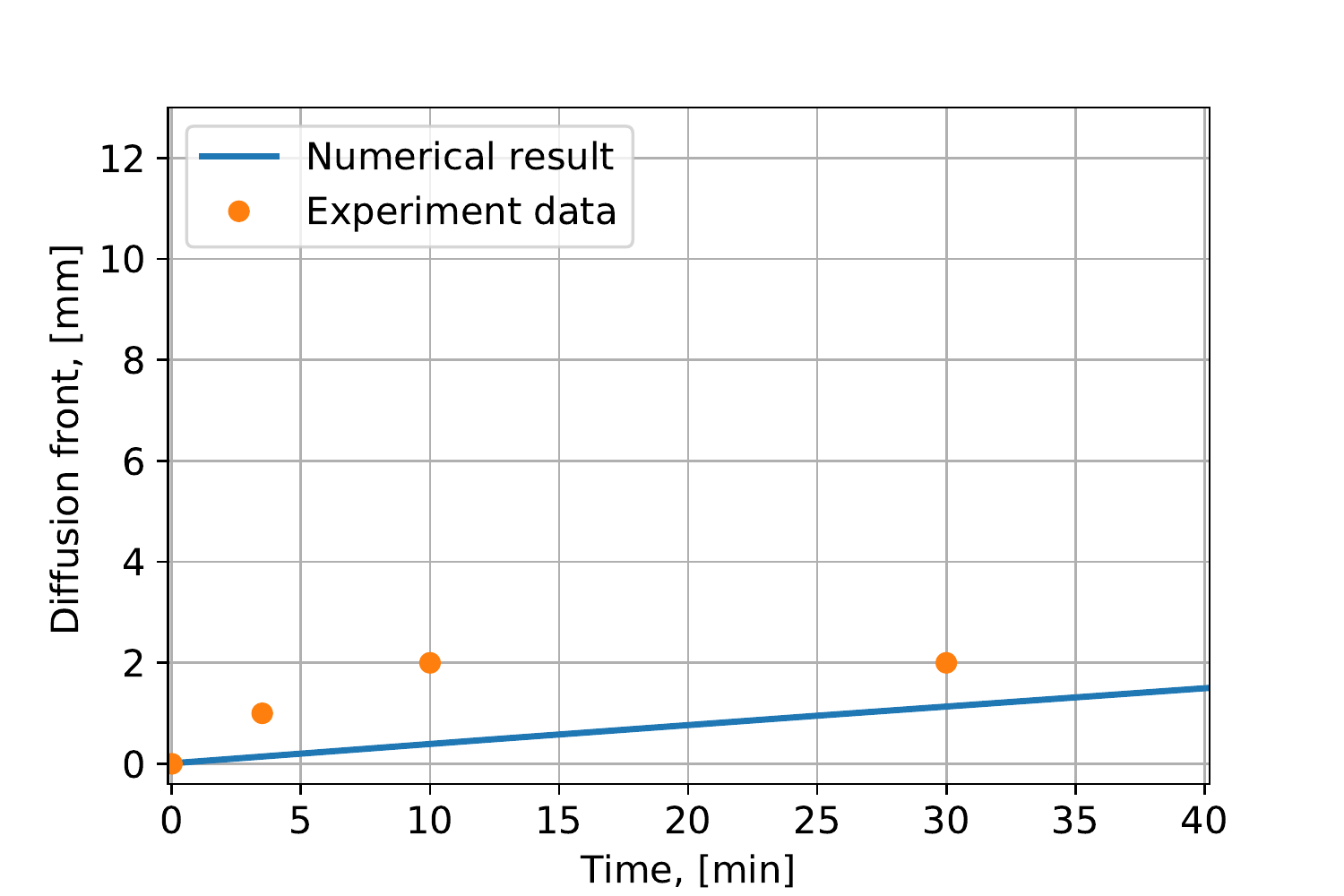}
    \caption{ Comparison to experimental data.}
  \end{subfigure}
  \begin{subfigure}[b]{0.4\linewidth}
   \includegraphics[width=\linewidth]{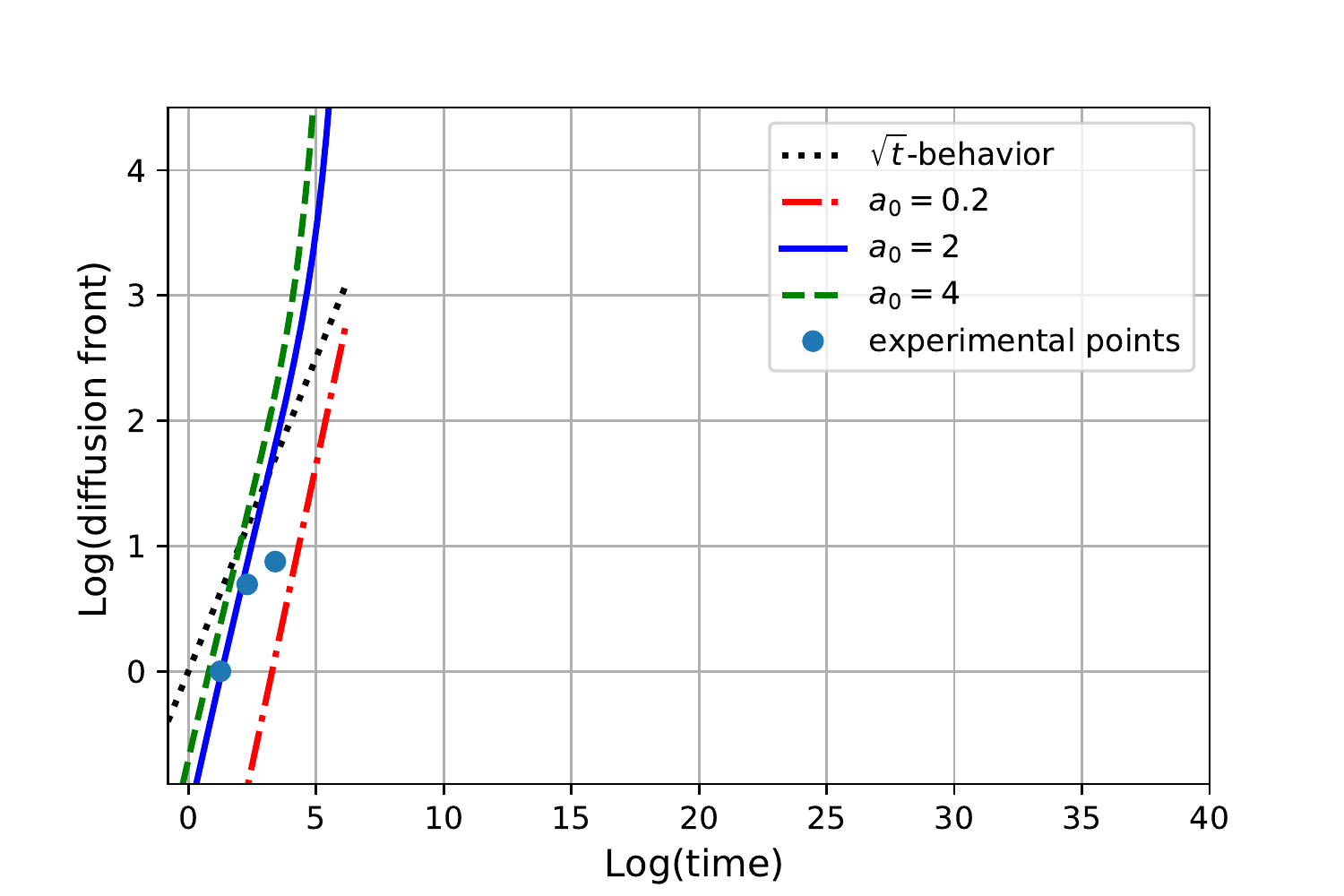}
    \caption{Penetration fronts for various values $a_0$.}
  \end{subfigure}
  \caption{Approximation of the large-time behavior of solutions to $(P)(u_0, s_0, b)$ on $[0,T]$ with $T=5000$ minutes.}
  \label{fig:coffee}
\end{figure}

As shown in Figure \ref{fig:coffee} (a), the behavior of our free boundary seems to be different from the real experimental result.  
From a phenomenological  point of view, a more realistic behavior of the free boundary is obtained in  \cite{NMKAMWG}. 
On the other hand, Theorem \ref{b-2} guarantees  that the growth observed in numerical  results is correct, theoretically. 
Moreover, in order to measure the growth rate for the free boundary, we show numerical results for varying positive constants $a_0$ in Figure \ref{fig:coffee} (b). From these results we conjecture that 
the free boundary position corresponding to Figure \ref{fig:coffee} (a) behaves like $t^{0.41}$. This is a sub-diffusive regime. However, other parameters can bring the front in a super-diffusive regime. Based on our current simulation and mathematical analysis results, we can only state that we expect the free boundary position to follow a power law for large times, but we are, for the moment, unable to  establish rigorously quantitative upper and lower bounds on $s(t)$. Nevertheless, relying also on results from \cite{Souplet}, we hope to be able to adapt some parts of our working technique developed in \cite{AM2} to handle this case. The main difficulty lies  on the fact that it seems that, for a large region in the parameter spaces, our sharp diffusion fronts tend to deviate from $t^\frac{1}{2}$. This makes us wonder what is the most relevant exponent $\beta$ and also for which parameter case and type(s) of rubber-like materials this corresponds.

\section{Discussion}
We were able to prove the global solvability for a one-phase free boundary problem with nonlinear kinetic condition that is meant to describe the migration of diffusants into rubber. Despite of its apparently simple one-dimensional structure, our free boundary model brings in a number of open questions. The most important ones include the identification of an asymptotic dependence  of type $s(t)\sim \mathcal{O}(t^\beta)$  as $t\to+\infty$ and its rigorous mathematical justification. Also, capturing numerically the large time behavior so that a certain power law is preserved requires a special care; compare e.g. the ideas from \cite{Zurek1, Zurek2} to be adapted for the finite element method used here; see \cite{NMKAMWG} for a detailed description of the numerical scheme used in this context.  Of course, to bring the one-dimensional model equations to describe better the physical scenario  of diffusants migrating into rubbers, more modeling components must be added, viz. macroscopic swelling, capillarity transport.  The case of more space dimensions is out of reach as it is not at all clear how the kinetic condition on the moving sharp diffusion front should be formulated especially close to corners or other singularities of the geometry. 

\section*{Acknowledgments} T. A. and A. M. thank the KK Foundation for financial support (project nr. 2019-0213). The work of T. A. is  partially supported also by JSPS KAKENHI Grant Number JP19K03572, while the one by K.K.  is partially supported by JSPS KAKENHI Grant Numbers JP16K17636, JP19K03572 and JP20K03704. 
Fruitful discussions with U. Giese, N. Kr\"oger, R. Meyer (Deutsches Institut f\"ur Kautschuktechnologie, Hannover, Germany) and S. Nepal, Y. Wondmagegne (Karlstad, Sweden) concerning the potential applicability of this research in the case of diffusants migration into polymers have greatly influenced our work.

\medskip

\begin{thebibliography}{SK}
\bibitem{AIIY}
\textsc{T. Aiki, H. Imai, N. Ishimura, Y. Yamada}, One-phase Stefan problems for sublinear heat equations: Asymptotic behavior of solutions, \emph{Communications in Applied Analysis}, \textbf{8} (2004), 1--15.

\bibitem{AM1}
\textsc{T. Aiki, A. Muntean}, Large time behavior of solutions to concrete carbonation problem,  \emph{Commun. Pure Appl. Anal.}, \textbf{9} (2010),1117--1129. 

\bibitem{AM2}
\textsc{T. Aiki, A. Muntean}, A free-boundary problem for concrete carbonation: Rigorous justification of $\sqrt{t}$-law of propagation, \emph{Interfaces and Free Bound.}, \textbf{15} (2013), 167--180. 

\bibitem{AM3}
\textsc{T. Aiki, A. Muntean}, Large-time asymptotics of moving-reaction interfaces involving nonlinear Henry's law and time-dependent Dirichlet data, \emph{Nonlinear Anal. TMA}, \textbf{93} (2013), 3--14. 

\bibitem{Cannon}
\textsc{J. R. Cannon}, \emph{The One-dimensional Heat Equation}, Cambridge University Press, 1984.  

\bibitem{Zurek1}
\textsc{C. Chainais-Hillairet, B. Merlet, A. Zurek}, Convergence of a finite volume scheme for a parabolic system with a free boundary modeling concrete carbonation, with  
\emph{ESAIM: M2AN}, \textbf{52} (2018), (2), 457--480.

\bibitem{Edwards}
\textsc{D. A. Edwards} 
A spatially nonlocal model for polymer-penetrant diffusion. \emph{Z. Angew. Math. Phys.} \textbf{52} (2001), no. 2, 254--288.


\bibitem{Fasano1}
\textsc{ A. Fasano, G. Meyer, M. Primicerio}, On a problem in the polymer industry: theoretical and numerical investigation of swelling, \emph{SIAM J. Appl. Math.}, \textbf{17} (1986), 945--960.
 
\bibitem{Fasano2}
\textsc{A. Fasano, A. Mikelic}, The 3D flow of a liquid through a porous medium with adsorbing and swelling granules, \emph{Interfaces and Free Boundaries}, \textbf{4} (2002), 239--261.

\bibitem{Souplet}
\textsc{M. Fila, P. Souplet}, Existence of global solutions with slow decay and unbounded free boundary for a superlinear
Stefan problem, \emph{Interfaces and Free Boundaries}. \textbf{3} (2001), 337--344.


\bibitem{Friedman}
\textsc{A. Friedman, G. Rossi}, Phenomenological continuum equations to describe Case II diffusion in polymeric materials. \emph{Macromolecules} 1997, \textbf{30} (1997) (1), 153--154.


\bibitem{Marat}
\textsc{M. O. Gallyamov}, Sharp diffusion front in diffusion problem with change of state. \emph{Eur. Phys. J. E} 1997, \textbf{36} (2013) (92), 1--13.


\bibitem{JRC}
\textsc{E. J. Hoekstra, R. Brandsch, C, Dequatre, P. Mercea, M.-R. Milana, A. St\"ormer, X. Trier, O. Vitrac, A. Sch\"afer, C. Simoneau},
Practical guidelines on the application of migration modelling for the estimation of specific migration (EUR 27529 EN),  Technical  report  European Commission's Joint Research Center, 2015.

\bibitem{kenmochi}
\textsc{N. Kenmochi},
Solvability of nonlinear evolution equations with time-dependent constraints and applications, \emph{Bull. Fac. Education, Chiba Univ.}, \textbf{30} (1981), 1--87.

\bibitem{KA}
\textsc{K. Kumazaki, A.  Muntean},
Local weak solvability of a moving boundary problem describing swelling along a halfline, \emph{Netw. Heterog. Media}., \textbf{14} (2019), no. 3, 445--496. 

\bibitem{KA2}
\textsc{K. Kumazaki,  A, Muntean},
Global weak solvability, continuous dependence on data, and large time growth of swelling moving interfaces, \emph{Interfaces and Free Boundaries}, \textbf{22} (2020), no. 1, 27--49. 


\bibitem{NMKAMWG}
\textsc{S. Nepal,  R. Meyer, N. H. Kr\"oger, T. Aiki, A. Muntean, Y. Wondmagegne, U. Giese}, A moving boundary approach of capturing diffusants penetration into rubber: FEM approximation and comparison with laboratory measurements, December 2020, arXiv:2012.07591.  

\bibitem{Visintin}
\textsc{A. Visintin}, A {S}tefan problem with a kinetic condition at the free boundary. \emph{Annali di Matematica pura ed applicata},  \textbf{147} (1986), 97--122.



\bibitem{Zurek2}
\textsc{A. Zurek}, Numerical approximation of a concrete carbonation model: Study of the $\sqrt{t}$-law of propagation. 
\emph{Numer Methods Partial Differential Eq.}, \textbf{35} (2019), 1801--1820.

\end{thebibliography}
\end{document}